\documentclass[11pt]{amsart}
\usepackage[a4paper,centering,left=2.2cm,right=2.2cm]{geometry}
\usepackage{amsfonts,amscd,amssymb,amsmath,amsthm,hyperref,mathrsfs,multirow,xcolor,lscape,longtable,pbox,lipsum,afterpage,graphicx,threeparttable}
\usepackage[thinlines]{easytable}
\usepackage{microtype}

\newtheorem{thm}{Theorem}

\newtheorem{lema}[thm]{Lemma}

\newtheorem{coro}[thm]{Corollary}
\newtheorem{rmk}[thm]{Remark}

\newtheorem*{thm*}{Theorem}


\setcounter{tocdepth}{1} 
\usepackage[all,cmtip]{xy}
\usepackage{tikz}
\usetikzlibrary{arrows} 
\usetikzlibrary{matrix}
\usetikzlibrary{calc}


\newcommand{\R}{\mathbb{R}}
\newcommand{\Z}{\mathbb{Z}}
\newcommand{\Q}{\mathbb{Q}}

\newcommand{\G}{\Gamma}
\newcommand{\N}{\mathbb{N}}
\newcommand{\T}{\mathrm{T}}

\newcommand{\h}{\mathbb{H}}
\newcommand{\rH}{\mathrm{H}}

\newcommand{\m}{\mathcal{M}}

\newcommand{\rG}{\mathrm{G}}
\newcommand{\rS}{\mathrm{S}}

\newcommand{\rP}{\mathrm{P}}

\newcommand{\W}{\mathcal{W}}

\newcommand{\tmt}[4]{\left({#1\atop #3}{#2\atop #4}\right)}

\newcommand{\eqr}[1]{\mbox{(\ref{eq:#1})}}
\newcommand{\ie}{i.e.\ }
\newcommand{\mr}[1]{\mathrm{#1}}
\newcommand{\C}{\mathbb{C}}

\newcommand{\tm}{\widetilde{\m}}
\newcommand{\mc}[1]{\mathcal{#1}}
\newcommand{\mcV}{\mathcal{V}}


\begin{document}

\title[Boundary and Eisenstein Cohomology]{Boundary and  Eisenstein  Cohomology of $\mr{SL}_3(\Z)$} 
\date{\today}
\author{Jitendra Bajpai, G\"unter Harder, Ivan Horozov  and Matias Moya Giusti }
\address{\tiny{Mathematisches Institut, Georg-August Universit\"at G\"ottingen, Bunsenstrasse 3-5, D-37073 Germany}}
\email{jitendra@math.uni-goettingen.de}
\address{\tiny{Max Planck Institute for Mathematics, Vivatsgasse~7, D-53111, Bonn, Germany}}
\email{harder@mpim-bonn.mpg.de}
\address{\tiny{Graduate Center of the City University of New York (CUNY), 365 5th Ave, NY 10016, USA}}
\address{and}
\address{\tiny{Department of Mathematics and Computer Science, Bronx Community College, CUNY, 2155 University Ave, NY 10453, USA.}}
\email{ivan.horozov@bcc.cuny.edu}
\address{\tiny{Universit\'e Paris-Est Marne-la-Vall\'ee, 5 Boulevard Descartes, 77454 Marne-la-Vall\'ee, France}}
\email{matias-victor.moyagiusti@u-pem.fr}

\subjclass[2010]{11F75; 11F70; 11F22; 11F06}  
\keywords{Arithmetic Groups, Automorphic Forms, Boundary Cohomology, Euler Characteristic, Eisenstein Cohomology, Ghost Classes}

\begin{abstract}

In this article, several cohomology spaces associated to the arithmetic groups $\mr{SL}_3(\Z)$ and $\mr{GL}_3(\Z)$ with coefficients in any highest weight representation $\m_\lambda$ have been computed, where $\lambda$ denotes their highest weight.  Consequently, we obtain detailed information of their Eisenstein cohomology  with coefficients in $\m_\lambda$. When $\m_\lambda$ is not self dual, the Eisenstein cohomology coincides with the cohomology of the underlying arithmetic group with coefficients in $\m_\lambda$. In particular, for such a large class of representations we can explicitly describe the cohomology of these two arithmetic groups. We accomplish this by studying the cohomology of the boundary of the Borel-Serre compactification and their Euler characteristic with coefficients in $\m_\lambda$. At the end, we employ our study to discuss the existence of ghost classes.

\end{abstract}

\maketitle
\tableofcontents




\section{Introduction}
Let $\mr{G}$ be a split semisimple group defined over $\mathbb{Q}$, then for every arithmetic subgroup $\Gamma \subset \mr{G}(\mathbb{Q})$ one can define the corresponding locally symmetric space
\[
	\rS_\Gamma = \Gamma \backslash \mr{G}(\mathbb{R}) / \mr{K}_\infty
\]
where $\mr{K}_\infty$ denotes the maximal connected compact subgroup of $\rG(\mathbb{R})$. In this context we can consider the Borel-Serre compactification $\overline{\rS}_\Gamma$ of $\rS_\Gamma$ (see \cite{BoSe73}), whose boundary $\partial \rS_\Gamma$ is a union of spaces indexed by the $\Gamma$-conjugacy classes of $\mathbb{Q}$-parabolic subgroups of $\rG$. For the detailed account on Borel-Serre compactification, see~\cite{Harder2018}.
The choice of a maximal $\mathbb{Q}$-split torus $\T$ of $\rG$ and a system of positive roots $\Phi^+$ in $\Phi(\rG, \T)$ determines a set of representatives for the conjugacy classes of $\mathbb{Q}$-parabolic subgroups, namely the standard $\mathbb{Q}$-parabolic subgroups. We will denote this set by $\mathcal{P}_\mathbb{Q}(\rG)$.  
One can write the boundary $\partial \rS_\Gamma$ as a union
\begin{equation}\label{eq:covering}
\partial \rS_\G = \bigcup_{\mr{P} \in \mathcal{P}_\mathbb{Q}(\rG)} \partial_{\mr{P}, \G}\,.
\end{equation}
The irreducible representation $\m_\lambda$ of $\rG$ associated to a highest weight $\lambda$ defines a sheaf over $\rS_\G$, denoted by $\tm_\lambda$, that is defined over $\Q$. This sheaf can be extended in a natural way to a sheaf in the Borel-Serre compactification $\overline{\rS}_\G$ and we can therefore consider the restriction to the boundary of the Borel-Serre compactification and to each face of the boundary, obtaining sheaves in $\partial \rS_\Gamma$ and $\partial_{\mr{P}, \G}$. 
The aforementioned covering defines a spectral sequence abutting to the cohomology of the boundary
\begin{equation}\label{eq:spec}
E_1^{p, q} = \bigoplus_{prk(\mr{P}) = p+1} H^q(\partial_{\mr{P}, \Gamma}, \tm_\lambda) \Longrightarrow H^{p+q}(\partial \rS_\Gamma, \tm_\lambda).
\end{equation}
where $prk(\mr{P})$ denotes the parabolic rank of $\mr{P}$ (the dimension of its $\mathbb{Q}$-split component). In this article we present an explicit description of this spectral sequence to discuss in detail the boundary and Eisenstein cohomology for the particular rank two cases $\mr{SL}_3$ and $\mr{GL}_3$.

\par  Since its development, cohomology of arithmetic groups has been proved to be a valuable tool  in analyzing the  relations between the theory of automorphic forms and the arithmetic properties of the associated locally symmetric spaces. A very common goal  is to describe the cohomology $H^{\bullet}(\mr{S}_{\Gamma}, \tm_{\lambda})$ in terms of automorphic forms. The study of boundary and Eisenstein cohomology of arithmetic groups has many number theoretic applications. As an example, one can see applications on the algebraicity of certain quotients of special values of $L$-functions in~\cite{Harder87}.

 \par The main tools and idea to study the boundary cohomology of  arithmetic groups have been developed by the second author in a series of articles~\cite{Harder87, Harder90, Harder2012}. This article is no exception in taking the hunt a little further. Especially, we make use of the techniques developed in~\cite{Harder2012}. In a way, this article is a continuation of the work carried out by the second author in~\cite{Harder90}. In Section 4, the cohomology of the boundary of $\mr{SL}_3(\Z)$ has been described after introducing the necessary notations and tools in Section 2 and Section 3.

\par In order to achieve the details about the space of Eisenstein cohomology of the two mentioned arithmetic groups, we make use of their Euler characteristics. In Section 5, we discuss this in detail. The importance of Euler charcateristic to study the space of Eisenstein cohomology has been discussed by the third author in~\cite{Horozov2014}. For more details about Euler characteristic of arithmetic groups see~\cite{Horozov2004, Horozov2005}. In Section 6, we compute the space of Eisenstein cohomology of the arithmetic groups $\mr{SL}_3(\Z)$ and $\mr{GL}_3(\Z)$ with coefficients in $\m_{\lambda}$. One of the most interesting take aways, among others, of these two sections is the intricate relation between the spaces of automorphic forms of $\mr{SL}_{2}$  and the boundary and Eisenstein cohomology spaces of $\mr{SL}_3$.

\par In Section~\ref{ghost}, we carry out the discussion of existence of ghost classes in $\mr{SL}_3(\Z)$ and $\mr{GL}_3(\Z)$ in detail with respect to any highest weight representation. Ghost classes were introduced by A. Borel~\cite{Borel84} in 1984. For details and exact definition of these classes see Section~\ref{ghost}. Later on, these classes have appeared in the work of the second author. For example at the end of the article~\cite{Harder90} with emphasis to the case $\mr{GL}_3$, it is mentioned that  \emph{ ``.... the ghost classes appear if some $L$-values vanish. The order of vanishing does not play a role. But this may change in higher rank case"}. The author further added that this aspect is worthy of investigation. The importance of their investigation has been occasionally pointed out. Since then, these classes have been studied at times, however the general theory of these classes has been slow in coming. We couldn't trace down the complete analysis of ghost classes in these two specific cases in complete generality, \ie for arbitrary coefficient system. However, in case of $\mr{SL}_4(\Z)$ these classes have been discussed by Rohlfs in~\cite{Rohlfs96}. In general for $\mr{SL}_n$, Franke developed a method to construct ghost classes in~\cite{Franke81}. Later on, using the method developed in~\cite{Franke81}, Kewenig and Rieband have found ghost classes for the orthogonal and symplectic groups when the coefficient system is trivial, see~\cite{KeRi97}. More recently, these classes have been discussed by the first and last author in the case of rank two orthogonal Shimura varieties in~\cite{BMG18} and by the last author in case of $\mr{GSp}_{4}$ in~\cite{Moya17} and $\mr{GU}(2,2)$ in~\cite{Moya}.

\par The main results of this article are the following,
\begin{itemize}
\item Theorem~\ref{euler}, where the Euler characteristic of $\mr{SL}_3(\Z)$ is calculated with respect to every finite dimensional highest weight representation. 
\item Theorem~\ref{bdcohsl3}, where the boundary cohomology with coefficients in every finite dimensional highest weight representation is described. 
\item Theorem~\ref{thm 1/2}, that shows that the Euler characteristic of the boundary cohomology is half the Euler characteristic of the Eisenstein cohomology. 
\item Theorem~\ref{Eiscoh}, where we describe the Eisenstein cohomology for every finite dimensional highest weight representation.
\item Theorem~\ref{ghostthm}, that shows that there are no ghost classes unless possibly in degree two for certain nonregular highest weights.
\end{itemize} 

In this paper we do not refer to and do not use transcendental methods, i.e. we do not write down
convergent (or even non convergent) infinite series and do not use the principle of analytic continuation.
This allows us to work with coefficient systems which are $\Q$-vector spaces. Only at one place we refer
to the Eichler-Shimura isomorphism, but this reference is not really relevant.  At one point we refer to
a deep theorem  of Bass-Milnor-Serre~\cite{BMS67} to get the complete description of the  Eisenstein cohomology.
Transcendental arguments would allow us to avoid  this reference, see~\cite{Harder2010} and~\cite{Sch83}.

In Theorem~~\ref{ghostthm} we leave open, whether in a certain case ghost classes might  exist.  In a letter to A.~Goncharov the second author has outlined an argument that shows that there are no ghost classes, but this argument depends on transcendental methods. This will be discussed in a forthcoming paper.



\section{Basic Notions}\label{basic}
This section provides quick review to the basic properties of $\mr{SL}_3$ (and $\mr{GL}_3$) and familiarize the reader with the notations to be used throughout the article. We discuss the corresponding locally symmetric space, Weyl group, the associated spectral sequence and Kostant representatives of the standard parabolic subgroups.

\subsection{Structure theory}\label{maximal}  
Let $\T$ be the maximal torus of $\mr{SL}_3$ given by the group of diagonal matrices and $\Phi$ be the corresponding root system of type $\mr{A}_2$. Let $\epsilon_1, \epsilon_2, \epsilon_3 \in X^\ast(\T)$ be the usual coordinate functions on $\T$. We will use the additive notation  for the abelian group $X^\ast(\T)$ of characters of $\T$. The root system is given by $\Phi=\Phi^{+} \cup \Phi^{-}$, where $\Phi^{+}$ and $\Phi^{-}$ denote the set of positive and negative roots of $\mr{SL}_3$ respectively, and $\Phi^{+} = \left\{  \epsilon_1-\epsilon_3, \epsilon_1-\epsilon_2, \epsilon_2-\epsilon_3 \right\}$. Then the system of simple roots is defined by $\Delta=\left\{\alpha_1=\epsilon_1-\epsilon_2, \alpha_2=\epsilon_2- \epsilon_3\right\}$. The fundamental weights associated to this root system are given by $\gamma_1 = \epsilon_1$ and $\gamma_2=\epsilon_1+\epsilon_2$. The irreducible finite dimensional representations of $\mr{SL}_3$ are determined by their highest weight which in this case are the elements of the form $\lambda = m_1 \gamma_1 + m_2 \gamma_2$ with $m_1, m_2$ non-negative integers. The Weyl group $\mathcal{W}$ of $\Phi$ is given by the symmetric group  $\mathfrak{S}_3$.

The above defined root system determines a set of proper standard $\mathbb{Q}$-parabolic subgroups $\mathcal{P}_\mathbb{Q}(\mr{SL}_3)=\left\{ \mr{P}_0, \mr{P}_1, \mr{P}_2 \right\}$, where $\mr{P}_0$ is a minimal and $\mr{P}_1, \mr{P}_2$ are maximal $\mathbb{Q}$-parabolic subgroups of $\mr{SL}_3$. To be more precise, we write
\[ 
\mr{P}_1(A) = \left\{ \left( \begin{array}{cccccc}
\ast & \ast & \ast  \\
0 & \ast & \ast   \\
0 & \ast & \ast \\ 
\end{array}  \right) \in \mr{SL}_3(A) \right\} \,, \quad
\mr{P}_2(A) = \left\{ \left( \begin{array}{ccccccc}
\ast & \ast & \ast \\
\ast & \ast & \ast  \\
0 & 0 & \ast  \\ 
 \end{array}  \right) \in \mr{SL}_3(A) \right\} \,,
\]
for every $\Q$-algebra $A$, and  $\mr{P}_0$ is simply the group given by $\rP_1 \cap \rP_2$.

The set $\mathcal{P}_\mathbb{Q}(\mr{SL}_3)$ is a set of representatives for the conjugacy classes of $\mathbb{Q}$-parabolic subgroups of $\mr{SL}_3$. Consider the maximal connected compact subgroup $\mr{K}_\infty = \mr{SO}(3) \subset \mr{SL}_3(\mathbb{R})$ and the arithmetic subgroup $\Gamma = \mr{SL}_3(\mathbb{Z})$, then $\mr{S}_\Gamma$ denotes the orbifold 
$ \Gamma \backslash \mr{SL}_3(\mathbb{R})/\mr{K}_\infty.$ Note that in terms of differential geometry $\mr{S}_\Gamma$ is not a locally symmetric space, this is because of the torsion elements in $\Gamma$. 

\subsection{Spectral sequence}

Let $\overline{\rS}_\Gamma$ denote the Borel-Serre compactification of $\rS_\Gamma$ (see \cite{BoSe73}). Following~\eqr{covering}, the boundary of this compactification $\partial \rS_\Gamma = \overline{\rS}_\Gamma \setminus \rS_\Gamma$ is given by the union of faces indexed by the $\Gamma$-conjugacy classes of $\mathbb{Q}$-parabolic subgroups. Consider the irreducible representation $\m_\lambda$ of $\mr{SL}_3$ associated with a highest weight $\lambda$. This representation is defined over $\Q$ and determines a sheaf $\tm_\lambda$ over $\rS_\Gamma$. By applying the direct image functor associated to the inclusion $i:\rS_\Gamma \hookrightarrow \overline{\rS}_\Gamma$, we obtain a sheaf on $\overline{\rS}_\Gamma$ and, since this inclusion is a homotopy equivalence (see~\cite{BoSe73}), it induces an isomorphism
$H^\bullet(\rS_\Gamma, \tm_\lambda) \cong H^\bullet(\overline{\rS}_\Gamma, i_\ast(\tm_\lambda)).$ From now on $i_\ast(\tm_\lambda)$ will be simply denoted by $\tm_\lambda$. In this paper, one of our immediate goals is to  make a thorough study of the cohomology space of the boundary 
$H^\bullet(\partial \rS_\Gamma, \tm_\lambda).$

The covering \eqr{covering} defines a spectral sequence in cohomology abutting to the cohomology of the boundary. To be more precise, one has the spectral sequence defined by~\eqr{spec} in the previous section. To be able to study this spectral sequence, we need to understand the cohomology spaces $H^q(\partial_{\mr{P}, \Gamma}, \tm_\lambda)$ and this can be done by making use of a certain decomposition. To present the aforementioned decomposition we need to introduce some notations.

Let $\mr{P} \in \mathcal{P}_\mathbb{Q}(\mr{SL}_3)$ be a standard $\mathbb{Q}$-parabolic subgroup and $\mr{M}$ be the corresponding Levi quotient, then $\Gamma_\mr{M}$ and $\mr{K}_\infty^\mr{M}$ will denote the image under the canonical projection $\pi:\mr{P} \longrightarrow \mr{M}$ of the groups $\Gamma \cap \mr{P}(\mathbb{Q})$ and $\mr{K}_\infty \cap \mr{P}(\mathbb{R})$, respectively. $\left.^\circ \mr{M} \right.$ will denote the group
\[
	\left.^\circ \mr{M} \right.= \bigcap_{\chi \in X^\ast_\mathbb{Q}(\mr{M})} \chi^2
\]
where $X^\ast_\mathbb{Q}(\mr{M})$ denotes the set of $\mathbb{Q}$-characters of $\mr{M}$. Then $\Gamma_\mr{M}$ and  $\mr{K}_\infty^\mr{M}$ are contained in $^\circ \mr{M}(\mathbb{R})$ and we define the locally symmetric space of the Levi quotient $\mr{M}$ by
\[
	\rS_\Gamma^\mr{M} = \Gamma_\mr{M} \backslash ^\circ \mr{M}(\mathbb{R})/\mr{K}_\infty^\mr{M}.
\]

On the other hand, let $$\mathcal{W}^{\rP}=\{ w \in \mc{W} | w(\Phi^{-}) \cap \Phi^{+} \subseteq \Phi^{+}(\mathfrak{n})\}$$ be the set of Weyl representatives of the parabolic $\rP$ (see \cite{Kostant61}), where $\mathfrak{n}$ is the Lie algebra of the unipotent radical of $\mr{P}$ and $\Phi^{+}(\mathfrak{n})$ denotes the set of roots whose root space is contained in $\mathfrak{n}$. If $\rho \in X^\ast(\mr{T})$ denotes half of the sum of the positive roots (in this case this is just $\epsilon_1 - \epsilon_3$) and $w \in \mathcal{W}^{\mr{P}}$, then the element $w\cdot\lambda = w(\lambda + \rho) - \rho$ is a highest weight of an irreducible representation $\mathcal{M}_{w\cdot\lambda}$ of $\left. ^\circ \mr{M} \right.$ and defines a sheaf $\widetilde{\m}_{w\cdot\lambda}$ over $\rS_\Gamma^\mr{M}$. Then we have a decomposition
\[
	H^q(\partial_{\mr{P}, \Gamma}, \tm_\lambda) = \bigoplus_{w \in \mathcal{W}^{\mr{P}}} H^{q-\ell(w)}(\rS_\Gamma^\mr{M}, \tm_{w\cdot\lambda}).
\]

\subsection{Kostant Representatives of Standard Parabolics}

In the next table we list all the elements of the Weyl group along with their lengths and the preimages of the simple roots. The preimages will be useful to determine the sets of Weyl representatives for each parabolic subgroup.
\begin{table}[htbp]\label{weights4}
\label{table3}
\centering
\begin{tabular}{ccccccccc}
\hline\hline \noalign{\smallskip}
\noindent $w$ & $\sigma$  & $\ell(w)$ & $\sigma^{-1}(\alpha_1)$ & $\sigma^{-1}(\alpha_2)$  \\
\noalign{\smallskip}\hline\hline\noalign{\smallskip}
$e$ & $e$& $0$ &  $\alpha_1$  & $\alpha_2$ \\ 
$s_{1}$ & $(1\, 2)$ & $1$ & $-\alpha_1$ & $\alpha_1+\alpha_2$ \\ 
$s_{2}$ & $(2 \,3)$ & $1$ & $\alpha_1+\alpha_2$ & $-\alpha_2$ \\ 
$s_2 s_{1}$ & $(1\, 3 \,2)$  & $2$ &  $\alpha_2$  & $-\alpha_1-\alpha_2$ \\ 
$s_{1}s_2$ & $(1\, 2 \,3)$  & $2$ & $-\alpha_1-\alpha_2$ & $\alpha_1$ \\
$s_2s_{1}s_2$& $(1\, 3)$  & $3$ & $-\alpha_2$ & $-\alpha_1$ \\ 
(=$s_1s_{2}s_1$) & & & &\\
\noalign{\smallskip}\hline\hline\noalign{\smallskip}
\end{tabular}
\vspace{0.4cm}
\caption{The set of Weyl representatives $\mathcal{W}^{\mr{P}_0}$}
\end{table}
 
\begin{figure}
\centering
\begin{tikzpicture}[scale=2.8]
\draw[draw=red,-triangle 90] (0.25,0.5) node[above]{$e$} --(1, 0) node[below]{$(2\, 3)$};
\draw[draw=blue,-triangle 90] (0.25,0.5) -- (-0.5,0)node[below]{$(1\, 2)$}; 
\draw[draw=blue,-triangle 90]  (1, -0.15) -- (1,-0.7) node[below]{$(1\, 2 \, 3)$} ; 
\draw[draw=red,-triangle 90,fill=blue] (-0.5, -0.15) -- (-0.5,-0.7) node[below]{$(1\, 3\, 2)$};
\draw[draw=red,-triangle 90] (1,-0.87) -- (0.25,-1.30) node[below]{$(1\, 3)$};
\draw[draw=blue,-triangle 90] (-0.5, -0.87)--(0.25, -1.30);
\draw[draw=red,-triangle 90,fill=blue];
 \end{tikzpicture}
\caption{ Weyl elements : blue and red arrows denote the simple reflections $s_1$ and $s_2$ respectively. $s_i$ acts from the left on the element that appears on the tale and the result of this action appears at the head of the respective arrow.}\label{t3}
\end{figure}
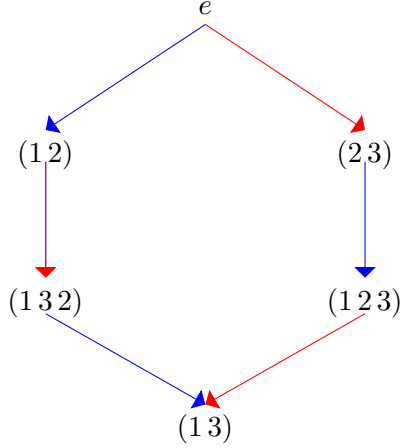

Note that in the case of $\mr{SL}_3$, $\epsilon_3 = -(\epsilon_1+\epsilon_2)$ and $\mathcal{W}^{\mr{P}_0} = \mathcal{W}$. Now,  by using this table, one can see that the sets of Weyl representatives for the maximal parabolics $\mr{P_1}$ and $\mr{P}_{2}$, are given by $$\mathcal{W}^{\mr{P}_1} = \left\{ e, s_1, s_1s_2 \right\} \quad \mr{and} \quad \mathcal{W}^{\mr{P}_2} = \left\{ e, s_2, s_2s_1 \right\}.$$ We now record for each standard parabolic $\mr{P}$ and Weyl representative $w \in \mathcal{W}^\mr{P}$, the expression $w\cdot\lambda$ in the convenient setting so that it can be used to obtain Lemma~\ref{parity0} and Lemma~\ref{parity1} which commence in the next few pages. Let $\lambda$ be given by $m_1\gamma_1 + m_2\gamma_2$, then the Kostant representatives for parabolics $\mr{P}_0, \mr{P}_1$ and $\mr{P}_2$ are listed respectively, where we make use of the notations
\begin{align}
\gamma^{\mr{M}_1} = \frac{1}{2}(\epsilon_2 - \epsilon_3), \quad &\kappa^{\mr{M}_1} = \frac{1}{2}(\epsilon_2 + \epsilon_3), \nonumber \\
\gamma^{\mr{M}_2} = \frac{1}{2}(\epsilon_1 - \epsilon_2), \quad &\kappa^{\mr{M}_2} = \frac{1}{2}(\epsilon_1 + \epsilon_2). \nonumber
\end{align}

\subsubsection{Kostant representatives for minimal parabolic $\mr{P}_0$}
\begin{align} \label{highest0}
e\cdot\lambda &= m_1\gamma_1 + m_2\gamma_2 \nonumber \\
s_1\cdot\lambda &= (-m_1-2) \gamma_1 + (m_1+m_2+1) \gamma_2 \nonumber \\
s_2\cdot\lambda &=  (m_1+m_2+1) \gamma_1 + (-m_2-2) \gamma_2 \nonumber \\
s_1s_2\cdot\lambda &= (-m_1-m_2-3) \gamma_1 + m_1 \gamma_2 \nonumber \\
s_2s_1\cdot\lambda &=  m_2 \gamma_1 + (-m_1-m_2-3) \gamma_2 \nonumber \\
s_2s_{1}s_2 \cdot \lambda = s_1s_2s_1\cdot\lambda &=  (-m_2-2) \gamma_1 + (-m_1-2) \gamma_2 \nonumber 
\end{align}

\subsubsection{Kostant representatives for maximal parabolic $\mr{P}_1$}
\begin{align} 
e\cdot\lambda &= m_2\gamma^{\mr{M}_1} + (-2m_1-m_2) \kappa^{\mr{M}_1} \nonumber \\
s_1\cdot\lambda &= (m_1+m_2+1) \gamma^{\mr{M}_1} + (m_1-m_2+3)\kappa^{\mr{M}_1} \nonumber \\
s_1s_2\cdot\lambda &= m_1 \gamma^{\mr{M}_1} + (m_1+2m_2+6) \kappa^{\mr{M}_1} \nonumber
\end{align}

\subsubsection{Kostant representatives for maximal parabolic $\mr{P}_2$}
\begin{align}
e\cdot\lambda &= m_1 \gamma^{\mr{M}_2} + (m_1+2m_2)\kappa^{\mr{M}_2} \nonumber \\
s_2\cdot\lambda &=  (m_1+m_2+1) \gamma^{\mr{M}_2} + (m_1-m_2-3) \kappa^{\mr{M}_2} \nonumber \\
s_2s_1\cdot\lambda &=  m_2 \gamma^{\mr{M}_2} + (-2m_1-m_2-6) \kappa^{\mr{M}_2} \nonumber
\end{align}




\section{Parity Conditions in Cohomology}\label{parity}

The cohomology of the boundary can be obtained by using a spectral sequence whose terms are given by the cohomology of the faces associated to each standard parabolic subgroup. In this section we expose, for each standard parabolic $\rP$ and irreducible representation $\mathcal{M}_{\nu}$ of the Levi subgroup $\mr{M} \subset \rP$ with highest weight $\nu$, a parity condition to be satisfied in order to have nontrivial cohomology $H^\bullet(\rS_\Gamma^\mr{M}, \widetilde{\mathcal{M}}_{\nu})$. Here $\rS_\G^\mr{M}$ denotes the symmetric space associated to $\mr{M}$ and $\tm_{\nu}$ is the sheaf in $\rS_\Gamma^\mr{M}$ determined by $\m_{\nu}$.

\subsection{Borel subgroup}
 
We begin by studying the parity condition imposed on the face associated to the minimal parabolic $\rP_0$ of $\mr{SL}_3$. The Levi subgroup of $\rP_0$ is the two dimensional torus $\mr{M}_0 = \mr{T}$ of diagonal matrices. To get nontrivial cohomology the finite group $ \Gamma_{\rm{M}_0}\cap \mr{K}^{\rm{M}_0}_\infty $ has to act trivially on $\m_{\nu}$, because  otherwise $\tm_{\nu}=0$. Therefore, the following three elements 
 
\[ 
\left( \begin{array}{rrr}
-1 & 0 & 0  \\
0 & -1 & 0  \\
0 & 0 & 1   \\ 
\end{array}  \right),
\left( \begin{array}{rrr}
-1 & 0 & 0  \\
0 & 1 & 0  \\
0 & 0 & -1   \\ 
\end{array}  \right),
\left( \begin{array}{rrr}
1 & 0 & 0  \\
0 & -1 & 0  \\
0 & 0 & -1   \\ 
\end{array}  \right) \in \Gamma_{\rm{M}_0}\cap \mr{K}^{\rm{M}_0}_\infty
\]
 must act trivially on $\m_{\nu}$ so that the sheaf $\tm_{\nu}$ is nonzero. By using this fact one can deduce the following 

\begin{lema} \label{parity0}
Let $\nu$ be given by $m'_1\gamma_1 + m'_2\gamma_2$. If $m'_1$ or $m'_2$ is odd then the corresponding local system $\tm_{\nu}$ in $\rS_\G^{\rm{M}_0}$ is $0$.  
\end{lema}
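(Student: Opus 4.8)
The plan is to reduce the assertion to a one-line character computation, exploiting the fact that the Levi $\mr{M}_0 = \mr{T}$ is a torus. Since $\mr{T}$ is abelian, the irreducible representation $\m_\nu$ is one-dimensional and $\mr{T}$ acts on it through the single character $\nu$. Hence the condition isolated in the paragraph preceding the statement --- that $\Gamma_{\mr{M}_0}\cap \mr{K}^{\mr{M}_0}_\infty$ act trivially, failing which $\tm_\nu = 0$ --- becomes the concrete requirement that the character $\nu$ evaluate to $1$ on each of the three displayed diagonal sign matrices. It therefore suffices to exhibit, whenever $m'_1$ or $m'_2$ is odd, one of these three matrices on which $\nu$ takes the value $-1$.

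First I would rewrite $\nu$ in the coordinate functions $\epsilon_i$. Using $\gamma_1 = \epsilon_1$ and $\gamma_2 = \epsilon_1 + \epsilon_2$, together with the $\mr{SL}_3$-relation $\epsilon_1 + \epsilon_2 + \epsilon_3 = 0$ in $X^\ast(\mr{T})$, one obtains
\[
\nu = m'_1\gamma_1 + m'_2\gamma_2 = (m'_1+m'_2)\,\epsilon_1 + m'_2\,\epsilon_2 .
\]
Next I would evaluate this character on the three elements, using $\epsilon_i(\diag{t_1,t_2,t_3}) = t_i$. Writing $g_1 = \diag{-1,-1,1}$ and $g_3 = \diag{1,-1,-1}$, a direct computation yields
\[
\nu(g_1) = (-1)^{m'_1 + 2m'_2} = (-1)^{m'_1}, \qquad \nu(g_3) = (-1)^{m'_2}
\]
(and, for completeness, $\nu(\diag{-1,1,-1}) = (-1)^{m'_1+m'_2}$).

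Finally I would conclude: if $m'_1$ is odd then $g_1$ acts on $\m_\nu$ by $-1 \neq 1$, whereas if $m'_2$ is odd then $g_3$ acts by $-1 \neq 1$. In either case the finite group $\Gamma_{\mr{M}_0}\cap \mr{K}^{\mr{M}_0}_\infty$ does not act trivially, so the local system $\tm_\nu$ on $\rS_\G^{\mr{M}_0}$ vanishes, which is the claim.

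There is no genuine obstacle here; the argument is purely computational. The only point that requires attention is the passage from the fundamental weights to the $\epsilon_i$-coordinates through the relation $\epsilon_1 + \epsilon_2 + \epsilon_3 = 0$, since it is precisely this relation that makes the individual parities of $m'_1$ and $m'_2$ --- rather than some combination such as $m'_1 + m'_2$ --- the decisive quantities, matching the two cases in the statement.
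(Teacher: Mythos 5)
Your proof is correct and follows exactly the route the paper indicates: the sheaf vanishes unless the three displayed diagonal sign matrices act trivially on the one-dimensional representation $\m_\nu$ of the torus, and evaluating the character $\nu=(m'_1+m'_2)\epsilon_1+m'_2\epsilon_2$ on $\diag{-1,-1,1}$ and $\diag{1,-1,-1}$ gives $(-1)^{m'_1}$ and $(-1)^{m'_2}$, which is precisely the deduction the paper leaves to the reader. The only cosmetic remark is that the relation $\epsilon_1+\epsilon_2+\epsilon_3=0$ is not actually needed, since $\gamma_1$ and $\gamma_2$ are already expressed in terms of $\epsilon_1,\epsilon_2$ alone.
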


Note that the $\nu$ to be considered in this paper will be of the form $w\cdot \lambda$, for $w\in \W^{\mathrm{P}_0}$. We denote by $\overline{\W}^{0}(\lambda)$ the set of Weyl elements $w$ such that $w\cdot \lambda$ do not satisfy the condition of Lemma~\ref{parity0}.

\begin{rmk}
For notational convenience, we simply use $\partial_{i}$ to denote the boundary face $\partial_{\rP_{i}, \Gamma}$ associated to the parabolic subgroup $\rP_{i}$ and the arithmetic group $\Gamma$ for $i\in\{0,1,2\}$. In addition, we will drop the use of $\Gamma$ from the  $\rS_{\Gamma}$ and $\partial \rS_{\Gamma}$ and likewise from the other notations. 
\end{rmk}

\subsubsection{Cohomology of the face $\partial_0$} In this case $H^{q}(\rS^{\rm{M}_0}, \widetilde{\m}_{w\cdot\lambda})=0$ for every $q\geq 1$. The set of Weyl representatives $\W^{\rP_{0}}= \mathcal{W}$ and the lengths of its  elements  are between 0 and 3 as shown in the table and figure above. We know
\begin{eqnarray*}
 H^{q}(\partial_0, \tm_{\lambda}) &= &\bigoplus_{w\in  \mathcal{W}^{\rP_{0}}} H^{q-\ell(w)}(\rS^{\rm{M}_{0}}, \widetilde{\m}_{w\cdot\lambda})\\
 &=& \bigoplus_{w\in  \mathcal{W}^{\rP_{0}} : \ell(w)=q} H^{0}(\rS^{\rm{M}_{0}}, \widetilde{\m}_{w\cdot\lambda})  \nonumber
 \end{eqnarray*}
 Therefore
 \begin{eqnarray}
 H^{0}(\partial_0, \tm_{\lambda}) &= & H^{0}(\rS^{\rm{M}_{0}}, \widetilde{\m}_\lambda)\nonumber\\
 H^{1}(\partial_0, \tm_{\lambda}) &= & H^{0}(\rS^{\rm{M}_{0}}, \widetilde{\m}_{s_1\cdot\lambda}) \oplus H^{0}(\rS^{\rm{M}_{0}}, \widetilde{\m}_{s_2\cdot\lambda}) \nonumber\\
H^{2}(\partial_0, \tm_{\lambda}) &= & H^{0}(\rS^{\rm{M}_{0}}, \widetilde{\m}_{s_1s_2\cdot\lambda}) \oplus H^{0}(\rS^{\rm{M}_{0}}, \widetilde{\m}_{s_2s_1\cdot\lambda}) \nonumber\\
H^{3}(\partial_0, \tm_{\lambda}) &= & H^{0}(\rS^{\rm{M}_{0}}, \widetilde{\m}_{s_1s_2s_1\cdot\lambda}) \nonumber
\end{eqnarray}
 and for every $q \geq 4$, the cohomology groups $H^{q}(\partial_0, \tm_{\lambda})=0$.

\subsection{Maximal parabolic subgroups}

In this section we study the parity conditions for the maximal parabolics. Let $i \in \left\{1, 2\right\}$, then $\rm{M}_i \cong \mr{GL}_2$ and in this setting, $\mr{K}_\infty^{\rm{M}_i}=\mr{O}(2)$ is the orthogonal group and $\G_{\rm{M}_i} = \mr{GL}_2(\Z)$. Therefore 
\[
	\mr{S}^{\rm{M}_i} \cong \widetilde{\mr{S}}^{\mr{GL}_2} = \mr{GL}_2(\mathbb{Z}) \backslash \mr{GL}_2(\mathbb{R})/\mr{O}(2) \mathbb{R}_{>0}^\times
\]

Let $\epsilon'_1, \epsilon'_2$ denote the usual characters in the torus T of diagonal matrices of $\mr{GL}_2$. Write $\gamma = \frac{1}{2}(\epsilon'_1 - \epsilon'_2)$ and $\kappa = \frac{1}{2}(\epsilon'_1 + \epsilon'_2)$. Consider the irreducible representation $\mathcal{V}_{a, n}$ of $\mr{GL}_2$ with highest weight $a \gamma + n \kappa$. In this expression $a$ and $n$ must be congruent modulo $2$, and $\mathcal{V}_{a, n} = Sym^a(\mathbb{Q}^2) \otimes det^{(n-a)/2}$ is the tensor product of the $a$-th symmetric power of the standard representation and the determinant to the $(\frac{n-a}{2})$-th power. This representation defines a sheaf $\widetilde{\mathcal{V}}_{a, n}$ in $\widetilde{\mr{S}}^{\mr{GL}_2}$ and also in the locally symmetric space
\[
	\mr{S}^{\mr{GL}_2} =\mr{GL}_2(\mathbb{Z}) \backslash  \mr{GL}_2(\mathbb{R})/\mr{SO}(2)\mathbb{R}_{>0}^\times .
\]

If $\mr{Z} \subset \mr{T}$ denotes the center of GL$_2$, one has
\[ 
\left( \begin{array}{cc}
-1 & 0 \\
0 & -1 \\ 
\end{array}  \right) \in \mr{GL}_2(\mathbb{Z}) \cap \mr{Z}(\mathbb{R}) \cap \mr{SO}(2)\mathbb{R}_{>0}^\times
\]
and therefore this element must act trivially on $\mathcal{V}_{a, n}$ in order to have $\widetilde{\mathcal{V}}_{a, n} \neq 0$, i.e. if $n$ is odd then $\widetilde{\mathcal{V}}_{a, n} = 0$. So, we are just interested in the case in which $n$ (and therefore $a$) is even. On the other hand, if $a=0$, $\mathcal{V}_{a, n}$ is one dimensional and 
\[ 
\left( \begin{array}{cc}
-1 & 0 \\
0 & 1 \\ 
\end{array}  \right) \in \mr{GL}_2(\mathbb{Z}) \cap \mr{O}(2)\mathbb{R}_{>0}^\times
\]
has the effect that the space of global sections of $\widetilde{\mathcal{V}}_{0, n}$ is $0$ when $n/2$ is odd.

We summarize the above discussion in the following

\begin{lema}\label{parity1}
Let $i$ be $1$ or $2$. For $w \in \mathcal{W}^{\rP_i}$, let $w\cdot \lambda$ be given by $a \gamma^{\mr{M}_i} + n \kappa^{\mr{M}_i}$, where $\gamma^{\mr{M}_i} = \frac{1}{2}(\epsilon_i - \epsilon_{i+1})$ and $\kappa^{\mr{M}_i} = \frac{1}{2}(\epsilon_i + \epsilon_{i+1})$. If $n$ is odd, the corresponding sheaf $\tm_{w\cdot\lambda}$ is $0$. As $a$ and $n$ are congruent modulo $2$, we should have $a$ and $n$ even in order to have a non trivial coefficient system $\widetilde{\mathcal{V}}_{a, n}$. Moreover, if $a = 0$ and $n/2$ is odd, then $H^\bullet(\rS^{\mr{M}_{i}}, \tm_{w\cdot\lambda})=0$. We denote the set of Weyl elements for which $H^\bullet(\rS^{\mr{M}_{i}}, \tm_{w\cdot\lambda})\neq0$ by $\overline{\W}^{i}(\lambda)$.
\end{lema}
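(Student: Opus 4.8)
The plan is to read off each vanishing assertion from the action of a suitable finite order element of $\Gamma_{\mr{M}_i}=\mr{GL}_2(\Z)$ that lies in the subgroup $\mr{O}(2)\R_{>0}^\times$ we quotient by on the right. First I would fix the identification $\mr{S}^{\mr{M}_i}\cong \widetilde{\mr{S}}^{\mr{GL}_2}=\mr{GL}_2(\Z)\backslash \mr{GL}_2(\R)/\mr{O}(2)\R_{>0}^\times$ together with $\mathcal{V}_{a,n}=\mathrm{Sym}^a(\Q^2)\otimes\det^{(n-a)/2}$, observing that integrality of $a\gamma^{\mr{M}_i}+n\kappa^{\mr{M}_i}$ already forces $a\equiv n\pmod 2$. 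The organising principle is that the sheaf attached to a representation $V$ has fibre the $\Gamma_x$--invariants of $V$ at a point with (finite) stabiliser $\Gamma_x$; hence an element fixing \emph{every} point and acting by a nontrivial scalar kills the sheaf outright, whereas a point--moving symmetry can only obstruct the global cohomology, through an invariants computation.

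For the assertion that $n$ odd forces $\tm_{w\cdot\lambda}=0$, I would use the central involution $-I=\diag{-1,-1}$. It lies in $\mr{GL}_2(\Z)$, is central, and equals the rotation by $\pi$, so $-I\in\mr{SO}(2)\subset\mr{O}(2)\R_{>0}^\times$; consequently $g^{-1}(-I)g=-I\in\mr{O}(2)\R_{>0}^\times$ for all $g$, and $-I$ fixes every point of $\widetilde{\mr{S}}^{\mr{GL}_2}$. On $\mathcal{V}_{a,n}$ it acts by $(-1)^a(\det(-I))^{(n-a)/2}=(-1)^a=(-1)^n$, which is $-1$ exactly when $n$ (equivalently $a$) is odd. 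Every fibre is then $0$, so the sheaf vanishes, and with $a\equiv n\pmod 2$ this is what forces both $a$ and $n$ to be even for a nonzero coefficient system.

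The remaining, and most delicate, case is $a=0$ with $n/2$ odd. Now $\mathcal{V}_{0,n}=\det^{n/2}$ is one--dimensional, so $-I$ acts trivially and one must instead use the reflection $r=\diag{-1,1}\in\mr{GL}_2(\Z)\cap\mr{O}(2)\R_{>0}^\times$, acting on $\det^{n/2}$ by $(\det r)^{n/2}=(-1)^{n/2}$. Since $r$ is not central it does not fix all of $\widetilde{\mr{S}}^{\mr{GL}_2}$, so here the vanishing must be proved for the cohomology and not merely for the sheaf; this is the step I expect to require the most care. I would exploit $\mr{O}(2)/\mr{SO}(2)=\langle r\rangle$ to write $\widetilde{\mr{S}}^{\mr{GL}_2}=\langle r\rangle\backslash\mr{S}^{\mr{GL}_2}$ and, since rational cohomology commutes with quotients by finite groups, identify $H^\bullet(\widetilde{\mr{S}}^{\mr{GL}_2},\widetilde{\mathcal{V}}_{0,n})$ with the $\langle r\rangle$--invariants of $H^\bullet(\mr{S}^{\mr{GL}_2},\widetilde{\mathcal{V}}_{0,n})$, where $r$ acts simultaneously on the space and on the coefficient line. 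For $a=0$ the underlying rational cohomology is that of the (rationally contractible) modular curve, concentrated in degree $0$, on which $r$ acts by the sign $(-1)^{n/2}$; the invariants therefore vanish in every degree precisely when $n/2$ is odd. Collecting the three conditions then defines $\overline{\W}^{i}(\lambda)$ as the set of $w\in\mathcal{W}^{\rP_i}$ that survive all of them.
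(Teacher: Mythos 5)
Your proposal is correct and follows essentially the same route as the paper: the identification $\mr{S}^{\mr{M}_i}\cong\widetilde{\mr{S}}^{\mr{GL}_2}$, the central element $-I\in\mr{GL}_2(\Z)\cap\mr{SO}(2)$ acting by $(-1)^n$ to kill the sheaf when $n$ is odd, and the reflection $\diag{-1,1}\in\mr{GL}_2(\Z)\cap\mr{O}(2)\R_{>0}^\times$ acting by $(-1)^{n/2}$ on $\det^{n/2}$ to kill the cohomology when $a=0$ and $n/2$ is odd. Your explicit descent argument identifying $H^\bullet(\widetilde{\mr{S}}^{\mr{GL}_2},\widetilde{\mathcal{V}}_{0,n})$ with the $\langle\diag{-1,1}\rangle$--invariants of the cohomology of the modular curve merely spells out what the paper asserts in one line about global sections.
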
 

Now, if $\mr{B} \subset \mr{GL}_2$ is the usual Borel subgroup and $\mr{T} \subset \mr{B}$ is the subgroup of diagonal matrices, one can consider the exact sequence in cohomology
\begin{equation}
H^0(\rS^\mr{T}, \widetilde{H^0(\mathfrak{n}, \mathcal{V}_{a, n})}) \rightarrow H_c^1(\widetilde{\rS}^{\mr{GL}_2}, \widetilde{\mathcal{V}}_{a, n}) \rightarrow H^1(\widetilde{\rS}^{\mr{GL}_2}, \widetilde{\mathcal{V}}_{a, n}) \rightarrow H^0(\rS^\mr{T}, \widetilde{H^1(\mathfrak{n}, \mathcal{V}_{a, n})}) \nonumber
\end{equation}
where $\mathfrak{n}$ is the Lie algebra of the unipotent radical $\mr{N}$ of $\mr{B}$. By using an argument similar to the one presented in Lemma~\ref{parity0}, we get
\begin{eqnarray}\label{eq:CohomGL_2}
H_c^1(\widetilde{\rS}^{\mr{GL}_2}, \widetilde{\mathcal{V}}_{a, n}) &=& H_!^1(\widetilde{\rS}^{\mr{GL}_2}, \widetilde{\mathcal{V}}_{a, n}) \quad \mbox{ if } \quad \frac{a}{2} \not\equiv \frac{n}{2} \quad \mbox{ mod } 2, \nonumber \\
H^1(\widetilde{\rS}^{\mr{GL}_2}, \widetilde{\mathcal{V}}_{a, n}) &=& H_!^1(\widetilde{\rS}^{\mr{GL}_2}, \widetilde{\mathcal{V}}_{a, n}) \quad \mbox{ if } \quad \frac{a}{2} \equiv \frac{n}{2} \quad \mbox{ mod } 2.
\end{eqnarray}

In the following subsections we make note of the cohomology groups associated to the maximal parabolic subgroups $\rP_1$ and $\rP_2$ which will be used in the computations involved to determine the boundary cohomology in the next section.

\subsubsection{Cohomology of the face $\partial_1$}
In this case, the Levi $\mr{M}_1$ is isomorphic to $\mr{GL}_2$ and therefore $H^{q}(\rS^{\mr{M}_1}, \tm_{w\cdot\lambda})=0$ for every $q\geq 2$ (see the example 2.1.3 in Subsection 2.1.2 of \cite{Harder2018} for the particular case of $\mr{GL}_2$ or Theorem 11.4.4 in \cite{BoSe73} for a more general statement). The set of Weyl representatives is given by $\W^{\rP_1}=\{e, s_1, s_1 s_2\}$ where the length of the elements are respectively $0,1,2$. By definition,
\begin{eqnarray*}
 H^{q}(\partial_1, \tm_{\lambda}) &= &\bigoplus_{w\in \W^{\rP_{1}}} H^{q-\ell(w)}(\rS^{\mr{M}_{1}}, \widetilde{\m}_{w\cdot\lambda})\\
 &=& H^{q}(\rS^{\mr{M}_{1}}, \widetilde{\m}_\lambda) \oplus H^{q-1}(\rS^{\mr{M}_{1}}, \widetilde{\m}_{{s_1\cdot\lambda}})  \oplus \, H^{q-2}(\rS^{\mr{M}_{1}}, \widetilde{\m}_{{s_1s_2}\cdot\lambda})\,. \nonumber 
 \end{eqnarray*}
 Therefore,
 \begin{eqnarray}
 H^{0}(\partial_1, \tm_{\lambda}) &= & H^{0}(\rS^{\mr{M}_{1}}, \widetilde{\m}_{\lambda})\nonumber\\
 H^{1}(\partial_1, \tm_{\lambda}) &= & H^{1}(\rS^{\mr{M}_{1}}, \widetilde{\m}_{\lambda}) \oplus H^{0}(\rS^{\mr{M}_{1}}, \widetilde{\m}_{{s_1\cdot\lambda}}) \nonumber\\
H^{2}(\partial_1, \tm_{\lambda}) &= &  H^{1}(\rS^{\mr{M}_{1}}, \widetilde{\m}_{{s_1\cdot\lambda}}) \oplus H^{0}(\rS^{\mr{M}_{1}}, \widetilde{\m}_{{s_1 s_2\cdot\lambda}})\nonumber\\
H^{3}(\partial_1, \tm_{\lambda}) &= &  H^{1}(\rS^{\mr{M}_{1}}, \widetilde{\m}_{{s_1 s_2 \cdot\lambda}}) \nonumber
\end{eqnarray}
 and for every $q \geq 4$, the cohomology groups $H^{q}(\partial_1, \tm_{\lambda})=0$.

\subsubsection{Cohomology of the face $\partial_2$}
In this case, the Levi $\mr{M}_2$ is isomorphic to $\mr{GL}_2$ and therefore $H^{q}(\rS^{\mr{M}_2}, \widetilde{\m}_{w\cdot\lambda})=0$ for every $q\geq 2$. The set of Weyl representatives is given by $W^{\mr{P}_2}=\{ e, s_2, s_2s_1\}$ where the lengths of the elements are respectively $0,1,2$. By definition,
\begin{eqnarray*}
 H^{q}(\partial_2, \tm_{\lambda}) &= &\bigoplus_{w\in \mathcal{W}^{\rP_{2}}} H^{q-\ell(w)}(S^{\mr{M}_{2}}, \widetilde{\m}_{w\cdot\lambda})\\
 &=& H^{q}(S^{\mr{M}_{2}}, \widetilde{\m}_\lambda) \oplus H^{q-1}(\rS^{\mr{M}_{2}}, \widetilde{\m}_{{s_2\cdot\lambda}})  \oplus \, H^{q-2}(S^{\mr{M}_{2}}, \widetilde{\m}_{{s_2 s_1\cdot\lambda}})\,.\nonumber 
 \end{eqnarray*}
 Therefore,
 \begin{eqnarray}
 H^{0}(\partial_2, \tm_{\lambda}) &= & H^{0}(\rS^{\mr{M}_{2}}, \widetilde{\m}_{\lambda})\nonumber\\
 H^{1}(\partial_2, \tm_{\lambda}) &= & H^{1}(\rS^{\mr{M}_{2}}, \widetilde{\m}_{\lambda}) \oplus H^{0}(\rS^{\mr{M}_{2}}, \widetilde{\m}_{{s_2\cdot\lambda}}) \nonumber\\
H^{2}(\partial_2, \tm_{\lambda}) &= &  H^{1}(\rS^{\mr{M}_{2}}, \widetilde{\m}_{{s_2\cdot\lambda}}) \oplus H^{0}(\rS^{\mr{M}_{2}}, \widetilde{\m}_{{s_2 s_1\cdot\lambda}})\nonumber\\
H^{3}(\partial_2, \tm_{\lambda}) &= &  H^{1}(\rS^{\mr{M}_{2}}, \widetilde{\m}_{{s_2 s_1 \cdot\lambda}}) \nonumber
\end{eqnarray}
 and for every $q \geq 4$, the cohomology groups $H^{q}(\partial_2, \tm_{\lambda})=0$.
 
 
 

\section{Boundary Cohomology}\label{bdsl3}

In this section we calculate the cohomology of the boundary by giving a complete description of the spectral sequence. The covering of the boundary of the Borel-Serre compactification defines a spectral sequence in cohomology.
\[
	E_1^{p, q} = \bigoplus_{prk(\mathrm{P})=(p+1)} H^q(\partial_\rP, \widetilde{\mathcal{M}}_\lambda) \Rightarrow H^{p+q}(\partial \rS, \widetilde{\mathcal{M}}_\lambda)
\]
and the nonzero terms of $E_1^{p, q}$ are for 
\begin{equation}\label{eq:pq}
(p, q) \in \left\{ (0, 0), (0, 1), (0, 2), (0, 3), (1, 0), (1, 1), (1, 2), (1, 3)\right\} \,.
\end{equation}

More precisely,  
\begin{eqnarray}\label{eq:e12}
 E_{ 1}^{0,q} & = & \bigoplus_{i=1}^{2} \mr{H}^{q}(\partial_{i}, \tm_{\lambda}) \nonumber\\
 &= &  \bigoplus_{i=1}^{2}\bigg[ \bigoplus_{w\in \W^{\rP_{i}}} \mr{H}^{q-\ell(w)}(\mr{S}^{\mr{M}_{i}}, \tm_{w\cdot\lambda})\bigg]  \,,\\
 E_{ 1}^{1,q} & = & \mr{H}^{q}(\partial_{0}, \tm_{\lambda}) \nonumber\\
& = & \bigoplus_{w \in \W^{\rP_{0}}: \ell(w)=q} \mr{H}^{0}(\mr{S}^{\mr{M}_{0}}, \widetilde{\m}_{w\cdot\lambda})\,.\nonumber
 \end{eqnarray}
 
 Since $\mr{SL}_{3}$  is of rank two, the spectral sequence has only two columns namely $E_{1}^{0,q}, E_{1}^{1,q}$ and to study the boundary cohomology, the task reduces to analyze the following morphisms

\begin{equation}\label{eq:spec3} 
E_{1}^{0,q} \xrightarrow{d_{1}^{0,q}} E_{1}^{1,q} 
\end{equation}

 where  $d_{1}^{0,q}$ is the differential map and the higher differentials vanish. One has 
 \begin{eqnarray*} 
 E_2^{0, q} := Ker(d_{1}^{0,q}) \quad \mr{and} \quad E_2^{1, q} := Coker(d_{1}^{0,q})\,.
 \end{eqnarray*}

In addition, due to be in rank $2$ situation, the spectral sequence degenerates in degree $2$. Therefore, we can use the fact that
\begin{equation}\label{eq:hks}
	H^k(\partial \rS, \tm_\lambda) = \bigoplus_{p+q=k} E_2^{p, q}\,.
\end{equation}

In other words, let us now consider the short exact sequence 

\begin{eqnarray}\label{eq:ses}
0 \longrightarrow E_2^{1, q-1} \longrightarrow H^q(\partial \rS, \tm_\lambda) \longrightarrow E_2^{0, q} \longrightarrow 0 \quad.
\end{eqnarray}

From now on, we will denote by $r_1 : \mr{H}^{\bullet}(\partial_{1}, \tm_{\lambda}) \rightarrow \mr{H}^{\bullet}(\partial_{0}, \tm_{\lambda})$ and  $r_2 : \mr{H}^{\bullet}(\partial_{2}, \tm_{\lambda}) \rightarrow \mr{H}^{\bullet}(\partial_{0}, \tm_{\lambda})$ the natural restriction morphisms.

\subsection{Case 1\,: $m_1 = 0$ and $m_2 = 0$ (trivial coefficient system)}
Following Lemma~\ref{parity0} and Lemma~\ref{parity1} from Section~\ref{parity}, we get 
\begin{eqnarray*}
\overline{\W}^{1}(\lambda) = \{e\} \,, \quad \overline{\W}^{2}(\lambda)=\{ e\} \,\quad \mr{and} \quad \overline{\W}^{0}(\lambda) = \{ e , s_1s_2s_1\} \,.
\end{eqnarray*}

By using~\eqr{e12} we record the values of $E_{1}^{0,q}$ and $E_{1}^{1,q}$ for the distinct values of $q$ below. Note that following~\eqr{pq} we know that for $q\geq 4$, $E_{1}^{i,q}=0$ for $i=0, 1$.

\begin{equation}\label{eq:e110q}
E_{1}^{0,q} =\left\{\begin{array}{cccc}  
&H^{0}(\rS^{\mr{M}_1}, \tm_{e\cdot\lambda})\oplus H^{0}(\rS^{\mr{M}_2}, \tm_{e\cdot\lambda})\cong\Q \oplus \Q \,, & q=0 \\ 
&\\
&0  \,, & \mr{otherwise}\\
\end{array}\qquad\,, \right .
\end{equation}
and 
\begin{equation}\label{eq:e111q}
E_{1}^{1,q} =\left\{\begin{array}{cccc}  
& H^{0}(\rS^{\mr{M}_0}, \tm_{e\cdot\lambda}) \cong \Q\,,& q=0\\
&\\
&  H^{0}(\rS^{\mr{M}_0}, \tm_{s_1 s_2 s_1\cdot\lambda}) \cong \Q\,,& q=3\\
&\\
& 0  \,,& \mr{otherwise}\\
\end{array}\qquad\,. \right .
\end{equation}

We now make a thorough analysis of~\eqr{spec3} to get the complete description of the spaces $E_{2}^{0,q}$ and $E_{2}^{1,q}$ which will give us the cohomology $\rH^{q}(\partial \bar{\rS}, \tm_{\lambda})$. We begin with $q=0$.

\subsubsection{{At the level $q=0$}} Observe that the short exact sequence~\eqr{ses} reduces  to 
\begin{eqnarray*}
0 \longrightarrow H^0(\partial \rS, \tm_\lambda) \longrightarrow E_2^{0, 0} \longrightarrow 0 \quad.
\end{eqnarray*}
To compute $E_2^{0, 0}$,  consider the differential $d_1^{0, 0}: E_1^{0, 0} \rightarrow E_1^{1, 0}$. Following~\eqr{e110q} and ~\eqr{e111q}, we have $d_1^{0, 0}: \Q \oplus \Q \longrightarrow \Q$ and we know  
that the differential $d_1^{0, 0}$ is surjective (see ~\cite{Harder87}).
Therefore 
\begin{align}\label{eq:e210}
 E_2^{0, 0} := Ker(d_1^{0, 0}) = \mathbb{Q} \quad \mr{and}  \quad E_2^{1, 0}:=Coker(d_1^{0, 0}) = 0  .
\end{align} 

Hence, we get $$ H^0(\partial \rS, \tm_\lambda) =\Q\,.$$

\subsubsection{{At the level $q=1$}} Following~\eqr{e210}, in this case, our short exact sequence~\eqr{ses} reduces to 
\begin{eqnarray*}
0 \longrightarrow H^1(\partial \rS, \tm_\lambda) \longrightarrow E_2^{0, 1} \longrightarrow 0 \quad\,,
\end{eqnarray*}
and we need  to compute $E_2^{0, 1}$. Consider the differential $d_{1}^{0,1}: E_1^{0, 1} \longrightarrow E_1^{1, 1}$ and following~\eqr{e110q} and~\eqr{e111q}, we observe that $d_{1}^{0,1}$ is a map between zero  spaces. Therefore, we obtain 
\begin{eqnarray*}\label{eq:e201}
E_2^{0, 1} = 0  \qquad \mr{and} \qquad E_2^{1, 1} = 0\,.
\end{eqnarray*}
As a result, we get $$ H^1(\partial \rS, \tm_\lambda) =0\,.$$

\subsubsection{{At the level $q=2$}} Following the similar process as in level $q=1$, we get 
\begin{eqnarray}\label{eq:e202}
E_2^{0, 2} = 0\qquad \mr{and} \qquad E_2^{1, 2} = 0\,.
\end{eqnarray}
This results into $$ H^2(\partial \rS, \tm_\lambda) =0\,.$$

\subsubsection{{At the level $q=3$}} Following~\eqr{e202}, in this case, the short exact sequence~\eqr{ses} reduces to 
\begin{eqnarray*}
0 \longrightarrow H^3(\partial \rS, \tm_\lambda) \longrightarrow E_2^{0, 3} \longrightarrow 0 \quad\,,
\end{eqnarray*}
and we need  to compute $E_2^{0, 3}$. Consider the differential $d_{1}^{0,3}: E_1^{0, 3} \longrightarrow E_1^{1, 3}$ and following~\eqr{e110q} and~\eqr{e111q}, we have $d_{1}^{0,3}: 0 \longrightarrow \Q$.
Therefore, \begin{eqnarray}\label{eq:e213}
E_2^{0, 3} = 0\qquad \mr{and} \qquad E_2^{1, 3} = \Q\,.
\end{eqnarray}
This gives us $$ H^3(\partial \rS, \tm_\lambda) =0\,.$$

\subsubsection{{At the level $q=4$}} Following~\eqr{e213}, in this case, the short exact sequence~\eqr{ses} reduces to 
\begin{eqnarray*}
0 \longrightarrow \Q \longrightarrow H^4(\partial \rS, \tm_\lambda) \longrightarrow E_2^{0, 4} \longrightarrow 0 \quad\,,
\end{eqnarray*}
and we need  to compute $E_2^{0, 4}$. Consider the differential $d_{1}^{0,4}: E_1^{0, 4} \longrightarrow E_1^{1, 4}$ and following~\eqr{e110q} and~\eqr{e111q}, we have $d_{1}^{0,4}: 0 \longrightarrow 0$. Therefore, \begin{eqnarray*}\label{e214}
E_2^{0, 4} = 0\qquad \mr{and} \qquad E_2^{1, 4} = 0\,,
\end{eqnarray*}
and we get $$ H^4(\partial \rS, \tm_\lambda) =\Q\,.$$
We can summarize the above discussion as follows :
\begin{equation*}
H^q(\partial \rS, \tm_\lambda) =\left\{\begin{array}{cccc}  
&\mathbb{Q} \quad \mr{for}\quad  q=0,4 \\ 
&\\
&0  \qquad \mr{otherwise}\\
\end{array}\qquad\,. \right .
\end{equation*}

\subsection{Case 2\, : $m_1 = 0$, $m_2 \neq 0$, $m_2$ even}\label{case2} Following the parity conditions established in Section~\ref{parity}, we find that 
\begin{eqnarray*}
\overline{\W}^{1}(\lambda) = \{e\} \,, \quad \overline{\W}^{2}(\lambda)=\{ e, s_2 s_1\} \,\quad \mr{and} \quad \overline{\W}^{0}(\lambda) = \{ e , s_1s_2s_1\} \,.
\end{eqnarray*}

Following~\eqr{e12} we write 
\begin{equation*}\label{eq:e10q}
E_{1}^{0,q} =\left\{\begin{array}{cccc}  
&H^{0}(\rS^{\mr{M}_2}, \tm_{e\cdot\lambda})\,,  &  q=0 \\ 
&\\
& H^{1}(\rS^{\mr{M}_1}, \tm_{e\cdot\lambda})  \,, & q=1 \\
&\\
& H^{1}(\rS^{\mr{M}_2}, \tm_{s_2s_1 \cdot \lambda})\,, & q=3\\
&\\
& 0 \,, & \mr{otherwise}
\end{array}\qquad\,, \right .
\end{equation*}
and 
\begin{equation}\label{eq:e211q}
E_{1}^{1,q} =\left\{\begin{array}{cccc}  
& H^{0}(\rS^{\mr{M}_0}, \tm_{e\cdot\lambda}) \cong \Q\,,& q=0\\
&\\
&  H^{0}(\rS^{\mr{M}_0}, \tm_{s_1 s_2 s_1\cdot\lambda}) \cong \Q\,,& q=3\\
&\\
& 0  \,,& \mr{otherwise}\\
\end{array}\qquad\,. \right .
\end{equation}
\subsubsection{{At the level $q=0$}} In this case,  the short exact sequence~\eqr{ses} is
\begin{eqnarray*}
0 \longrightarrow H^0(\partial \rS, \tm_\lambda) \longrightarrow E_2^{0, 0} \longrightarrow 0 \quad\,.
\end{eqnarray*}
Consider the differential $d_{1}^{0,0}: E_1^{0, 0} \longrightarrow E_1^{1, 0}$  which is an isomorphism $$d_{1}^{0,0}: H^{0}(\rS^{\mr{M}_2}, \tm_{e\cdot\lambda}) \longrightarrow H^{0}(\rS^{\mr{M}_0}, \tm_{e\cdot\lambda})\,.$$ Therefore, we obtain 
\begin{eqnarray*}\label{eq:e2200}
E_2^{0, 0} = 0  \qquad \mr{and} \qquad E_2^{1, 0} = 0\,.
\end{eqnarray*}
As a result, we get $$ H^0(\partial \rS, \tm_\lambda) =0\,.$$

\subsubsection{{At the level $q=1$}} In this case,  the short exact sequence~\eqr{ses} becomes
\begin{eqnarray*}
0 \longrightarrow H^1(\partial \rS, \tm_\lambda) \longrightarrow E_2^{0, 1} \longrightarrow 0 \quad\,.
\end{eqnarray*}
Consider the differential $d_{1}^{0,1}: E_1^{0, 1} \longrightarrow E_1^{1, 1}$  which, from~\eqr{CohomGL_2}, is simply a zero morphism $$d_{1}^{0,1}: H_!^{1}(\rS^{\mr{M}_1}, \tm_{e\cdot\lambda}) \longrightarrow 0\,.$$ Therefore, we obtain 
\begin{eqnarray*}\label{eq:e2201}
E_2^{0, 1} = H_!^{1}(\rS^{\mr{M}_1}, \tm_{e\cdot\lambda})  \qquad \mr{and} \qquad E_2^{1, 1} = 0\,.
\end{eqnarray*}
As a result, we get $$ H^1(\partial \rS, \tm_\lambda) =  H_!^{1}(\rS^{M_1}, \tm_{e\cdot\lambda})\,.$$

\subsubsection{{At the level $q=2$}} The short exact sequence becomes 
\begin{eqnarray*}
0 \longrightarrow H^2(\partial \rS, \tm_\lambda) \longrightarrow E_2^{0, 2} \longrightarrow 0 \quad \,, 
\end{eqnarray*}
and following the differential $d_{1}^{0,2}: E_1^{0, 2} \longrightarrow E_1^{1, 2}$  which is again simply the zero morphism $$d_{1}^{0,2}: 0 \longrightarrow 0\,,$$ gives us 
\begin{eqnarray}
E_2^{0, 2} = 0 \qquad \mr{and} \qquad E_2^{1, 2} = 0\,.\nonumber
\end{eqnarray}
Hence, $$ H^2(\partial \rS, \tm_\lambda) = 0 \,.$$

\subsubsection{{At the level $q=3$}}  The short exact sequence~\eqr{ses} reduces to 
\begin{eqnarray*}
0 \longrightarrow H^3(\partial \rS, \tm_\lambda) \longrightarrow E_2^{0, 3} \longrightarrow 0 \quad\,,
\end{eqnarray*}
and the differential $d_{1}^{0,3}: E_1^{0, 3} \longrightarrow E_1^{1, 3}$ is an epimorphism 
$$ d_{1}^{0,3}:H^1(\rS^{\mr{M}_2}, \tm_{s_2s_1\cdot\lambda}) \longrightarrow H^0(\rS^{\mr{M}_0}, \tm_{s_1s_2s_1.\lambda})\,,$$
 
Therefore \begin{align*}
E_2^{1, 3} = 0 \quad \mr{and} \quad E_2^{0, 3} = H_!^1(\rS^{\mr{M}_2}, \tm_{s_2s_1.\lambda}).
\end{align*}

Since $E_2^{1, 3} = 0$ and $E_2^{0, 4} =0$, we realize that $H^q(\partial \rS, \tm_\lambda) = 0$ for every $q\geq 4$. \\

We summarize the discussion of this case as follows 
\begin{equation*}
H^q(\partial \rS, \tm_\lambda) =\left\{\begin{array}{cccc}  
& H_{!}^{1}(\rS^{\mr{M}_1}, \tm_{e.\lambda}) \,, & q=1 \\ 
&\\
& H_{!}^{1}(\rS^{\mr{M}_2}, \tm_{s_2s_1\cdot\lambda}) \,, & q=3 \\
&\\
&0  \,, & \mr{otherwise}\\
\end{array}\qquad\,. \right .
\end{equation*}

\subsection{Case 3 : $m_2 = 0$, $m_1 \neq 0$, $m_1$ even}\label{case3} Following the parity conditions established in Section~\ref{parity}, we find that 
\begin{eqnarray*}
\overline{\W}^{1}(\lambda) = \{e, s_1 s_2\} \,, \quad \overline{\W}^{2}(\lambda)=\{ e \} \,\quad \mr{and} \quad \overline{\W}^{0}(\lambda) = \{ e , s_1s_2s_1\} \,.
\end{eqnarray*}

Following~\eqr{e12}, 
\begin{equation*}\label{eq:e310q}
E_{1}^{0,q} =\left\{\begin{array}{cccc}  
&H^{0}(\rS^{\mr{M}_1}, \tm_{e\cdot\lambda})\,,  &  q=0 \\ 
&\\
& H^{1}(\rS^{\mr{M}_2}, \tm_{e\cdot\lambda})  \,, & q=1 \\
&\\
& H^{1}(\rS^{\mr{M}_1}, \tm_{s_1s_2 \cdot \lambda})\,, & q=3\\
&\\
& 0 \,, & \mr{otherwise}
\end{array}\qquad\,, \right .
\end{equation*}
and 
the spaces $E_{1}^{1,q}$ in this case are exactly same as described in the above two cases expressed by~\eqr{e211q}. Following similar steps  taken  in Subsection~\ref{case2}, we obtain the following 
\begin{equation*}
H^q(\partial \rS, \tm_\lambda) =\left\{\begin{array}{cccc}  
& H_{!}^{1}(\rS^{\mr{M}_2}, \tm_{e.\lambda}) \,, & q=1 \\ 
&\\
& H_{!}^{1}(\rS^{\mr{M}_1}, \tm_{s_1s_2\cdot\lambda}) \,, & q=3 \\
&\\
&0  \,, & \mr{otherwise}\\
\end{array}\qquad\,. \right .
\end{equation*}

\subsection{Case 4 : $m_1 \neq 0$, $m_1$ even  and $m_2 \neq 0$,  $m_2$ even}\label{case4} Following the parity conditions established in Section~\ref{parity}, we find that 
\begin{eqnarray*}
\overline{\W}^{1}(\lambda) = \{e, s_1 s_2\} \,, \quad \overline{\W}^{2}(\lambda)=\{ e, s_2 s_1\} \,\quad \mr{and} \quad \overline{\W}^{0}(\lambda) = \{ e , s_1s_2s_1\} \,.
\end{eqnarray*}

Following~\eqr{e12}, 
\begin{equation*}\label{eq:e410q}
E_{1}^{0,q} =\left\{\begin{array}{cccc}  
& H^{1}(\rS^{\mr{M}_2}, \tm_{e\cdot\lambda})\oplus H^{1}(\rS^{\mr{M}_2}, \tm_{e\cdot\lambda})  \,, & q=1 \\
&\\
& H^{1}(\rS^{\mr{M}_1}, \tm_{s_1s_2 \cdot \lambda}) \oplus H^{1}(\rS^{\mr{M}_2}, \tm_{s_2s_1 \cdot \lambda})\,, & q=3\\
&\\
& 0 \,, & \mr{otherwise}
\end{array}\qquad\,, \right .
\end{equation*}
and the spaces $E_{1}^{1,q}$ are described by~\eqr{e211q}. Combining the process performed for the previous two cases in Subsections~\ref{case2} and ~\ref{case3}, we get the following result

\begin{equation*}
H^q(\partial \rS, \m_\lambda) =\left\{\begin{array}{cccc}  
& \Q \oplus H_{!}^{1}(\rS^{\mr{M}_1}, \tm_{e\cdot\lambda})\oplus H_{!}^{1}(\rS^{\mr{M}_2}, \tm_{e\cdot\lambda}) \,, & q=1 \\ 
&\\
&   H_{!}^1(\rS^{\mr{M}_1}, \tm_{s_1s_2\cdot\lambda}) \oplus H_{!}^1(\rS^{\mr{M}_2}, \tm_{s_2s_1.\lambda}) \oplus \Q \,, & q=3 \\
&\\
&0  \,, & \mr{otherwise}\\
\end{array}\qquad\,. \right .
\end{equation*}

\subsection{Case 5 : $m_1 \neq 0$, $m_1$ even, $m_2$ odd}\label{case5} 
  
Following the parity conditions established in Section~\ref{parity} and~\eqr{e12}, we find that 
\begin{eqnarray*}
\overline{\W}^{1}(\lambda) = \{s_1, s_1 s_2\} \,, \quad \overline{\W}^{2}(\lambda)=\{ e, s_2\} \,\quad \mr{and} \quad \overline{\W}^{0}(\lambda) = \{ s_1 , s_1s_2\} \,,
\end{eqnarray*}
and
\begin{equation*}\label{eq:e510q}
E_{1}^{0,q} =\left\{\begin{array}{cccc}  
&H^{1}(\rS^{\mr{M}_2}, \tm_{e\cdot\lambda})\,,  &  q=1 \\ 
&\\
& H^{1}(\rS^{\mr{M}_1}, \tm_{s_1 \cdot \lambda}) \oplus H^{1}(\rS^{\mr{M}_2}, \tm_{s_2 \cdot \lambda}) \,, & q=2\\
&\\
& H^{1}(\rS^{\mr{M}_1}, \tm_{s_1s_2 \cdot \lambda})\,, & q=3\\
&\\
& 0 \,, & \mr{otherwise}
\end{array}\qquad\,, \right .
\end{equation*}
and 
\begin{equation}\label{eq:e511q}
E_{1}^{1,q} =\left\{\begin{array}{cccc}  
& H^{0}(\rS^{\mr{M}_0}, \tm_{s_1\cdot\lambda}) \cong \Q\,, & q=1\\
&\\
&H^{0}(\rS^{\mr{M}_0}, \tm_{s_1 s_2 \cdot\lambda}) \cong \Q \,, & q=2\\ 
&\\
& 0  \,,& \mr{otherwise}\\
\end{array}\qquad\,. \right .
\end{equation}
Following the similar computations we get all the spaces $E_{2}^{p,q}$ for $p=0,1$ as follows
\begin{equation*}\label{eq:e520}
E_{2}^{0,q} =\left\{\begin{array}{cccc}  
&H^{1}_{!}(\rS^{\mr{M}_2}, \tm_{e\cdot\lambda})\,,  &  q=1 \\ 
&\\
& H^{1}_{!}(\rS^{\mr{M}_1}, \tm_{s_1 \cdot \lambda}) \oplus H^{1}_{!}(\rS^{\mr{M}_2}, \tm_{s_2 \cdot \lambda}) \,, & q=2\\
&\\
& H^{1}_{!}(\rS^{\mr{M}_1}, \tm_{s_1s_2 \cdot \lambda})\,, & q=3\\
&\\
& 0 \,, & \mr{otherwise}
\end{array}\qquad\,, \right .
\end{equation*}
and 
\begin{equation*}\label{eq:e521}
E_{2}^{1,q} =0\,, \quad \forall q\,. 
\end{equation*}
Following~\eqr{hks}, we obtain 
\begin{eqnarray*}
H^q(\partial \rS, \tm_\lambda) = E_{2}^{0,q}\,, \qquad \forall  \, q\,.
  \end{eqnarray*}

\subsection{Case 6 : $m_1 = 0$, $m_2$ odd}\label{case6} Following the parity conditions established in Section~\ref{parity} and~\eqr{e12}, we find that 
\begin{eqnarray*}
\overline{\W}^{1}(\lambda) = \{s_1, s_1 s_2\} \,, \quad \overline{\W}^{2}(\lambda)=\{s_2\} \,\quad \mr{and} \quad \overline{\W}^{0}(\lambda) = \{ s_1 , s_1s_2\} \,,
\end{eqnarray*}
and
\begin{equation*}\label{eq:e10qq}
E_{1}^{0,q} =\left\{\begin{array}{cccc}  
& H^{1}(\rS^{\mr{M}_1}, \tm_{s_1 \cdot \lambda})\oplus H^{0}(\rS^{\mr{M}_1}, \tm_{s_1 s_2 \cdot \lambda})  \oplus H^{1}(\rS^{\mr{M}_2}, \tm_{s_2 \cdot \lambda}) \,, & q=2\\
&\\
& 0 \,, & \mr{otherwise}
\end{array}\qquad\,, \right .
\end{equation*}

and the spaces $E_{1}^{1,q}$ are described by~\eqr{e511q}. Following the similar computations we get all the spaces $E_{2}^{p,q}$ for $p=0,1$ as follows
\begin{equation*}\label{eq:e5520}
E_{2}^{0,q} =\left\{\begin{array}{cccc}  
& H_{!}^1(\rS^{\mr{M}_1}, \tm_{s_1\cdot\lambda}) \oplus W \oplus H_{!}^1(\rS^{\mr{M}_2}, \tm_{s_2\cdot\lambda})\,, & q=2\\
&\\ 
& 0 \,, & \mr{otherwise}
\end{array}\qquad\,, \right .
\end{equation*}
where $W$ is the one dimensional space
\begin{equation*}\label{eq:W6}
W = \left\{(\xi, \nu) \in  H^0(\rS^{\mr{M}_1}, \tm_{s_1s_2\cdot\lambda}) \oplus H_{Eis}^1(\rS^{\mr{M}_2}, \tm_{s_2\cdot\lambda}) \mid r_1(\xi) = r_2(\nu) \right\}
\end{equation*} 
along with $r_1$ and $r_2$ the restriction morphisms defined as follows
\begin{align}
H^{\bullet}(\rS^{\rm{M}_2}, \tm_{s_2 \cdot \lambda}) \xrightarrow{r_2} H^{\bullet}(\rS^{\rm{M}_0}, \tm_{s_1 s_2 \cdot \lambda}) \nonumber \\
H^{\bullet}(\rS^{\rm{M}_1}, \tm_{s_1 s_2 \cdot \lambda}) \xrightarrow{r_1} H^{\bullet}(\rS^{\rm{M}_0}, \tm_{s_1 s_2 \cdot \lambda})  \nonumber
\,.\end{align}

Both $r_1$ and $r_2$ are surjective. This fact follows directly by applying Kostant's formula to the Levi quotient of each of the maximal parabolic subgroups. Then, the target spaces of $r_1$ and $r_2$ are just the boundary and the Eisenstein cohomology of GL$_2$, respectively. From the above properties of $r_1$ and $r_2$, we conclude that $W$ is isomorphic to $H^{0}(\rS^{\rm{M}_0}, \tm_{s_1 s_2 \cdot \lambda})$, which is a $1$-dimensional space.

However,
\begin{equation*}\label{eq:e621}
E_{2}^{1,q} =\left\{\begin{array}{cccc}
&H^{0}(\rS^{\mr{M}_2}, \tm_{s_1\cdot\lambda}) \cong \Q\,,  &  q=1 \\ 
&\\
&0\,, & \mr{otherwise}
\end{array}\qquad\,, \right .
\end{equation*}
Now, following~\eqr{hks}, we obtain 
\begin{eqnarray*}
H^q(\partial \rS, \tm_\lambda) =  \left\{\begin{array}{cccc}
& H_{!}^1(\rS^{\mr{M}_1}, \tm_{s_1\cdot\lambda}) \oplus H_{!}^1(\rS^{\mr{M}_2}, \tm_{s_2\cdot\lambda}) \oplus \Q \oplus \Q \,, & q=2 \\
&\\
& 0 \,,& \mr{otherwise}
\end{array}\qquad\,, \right .
\end{eqnarray*}

\subsection{Case 7:  $m_1$ odd, $m_2 =0$} Following the parity conditions established in Section~\ref{parity} we find that 
\begin{eqnarray*}
\overline{\W}^{1}(\lambda) = \{s_1 \} \,, \quad \overline{\W}^{2}(\lambda)=\{s_2, s_2 s_1\} \,\quad \mr{and} \quad \overline{\W}^{0}(\lambda) = \{ s_2 , s_2 s_1\} \,.
\end{eqnarray*}
Observe that this is exactly the reflection of case 6 described in Subsection~\ref{case6}. The roles of parabolics $\rP_1$ and $\rP_2$ will be interchanged. Hence, following the similar arguments we will obtain
\begin{eqnarray*}
H^q(\partial \rS, \tm_\lambda) =  \left\{\begin{array}{cccc}
& H_{!}^1(\rS^{\mr{M}_2}, \tm_{s_2\cdot\lambda}) \oplus H_{!}^1(\rS^{\mr{M}_1}, \tm_{s_1\cdot\lambda}) \oplus \Q \oplus \Q \,, & q=2 \\
& \\
& 0 \,,& \mr{otherwise}
\end{array}\qquad\,. \right .
\end{eqnarray*}
 
\subsection{Case 8 :  $m_1$ odd, $m_2 \neq 0$, $m_2$ even} Following the parity conditions established in Section~\ref{parity} we find that 
\begin{eqnarray*}
\overline{\W}^{1}(\lambda) = \{e, s_1 \} \,, \quad \overline{\W}^{2}(\lambda)=\{ s_2, s_2 s_1\} \,\quad \mr{and} \quad \overline{\W}^{0}(\lambda) = \{ s_2 , s_2 s_1\} \,.
\end{eqnarray*}
Observe that this is exactly the reflection of case 5 described in Subsection~\ref{case5}. The roles of parabolics $\rP_1$ and $\rP_2$ will be interchanged. Hence,  we will obtain
\begin{equation*}\label{eq:e55520}
H^q(\partial \rS, \tm_\lambda) =\left\{\begin{array}{cccc}  
&H^{1}_{!}(\rS^{\mr{M}_1}, \tm_{e\cdot\lambda})\,,  &  q=1 \\ 
&\\
& H^{1}_{!}(\rS^{\mr{M}_1}, \tm_{s_1 \cdot \lambda}) \oplus H^{1}_{!}(\rS^{\mr{M}_2}, \tm_{s_2 \cdot \lambda}) \,, & q=2\\
&\\
& H^{1}_{!}(\rS^{\mr{M}_2}, \tm_{s_2s_1 \cdot \lambda})\,, & q=3\\
&\\
& 0 \,, & \mr{otherwise}
\end{array}\qquad\,. \right .
\end{equation*}

\subsection{Case 9:   $m_1 $ odd, $m_2$ odd}
By checking the parity conditions for standard parabolics, following Lemmas ~\ref{parity0} and ~\ref{parity1}, we see that $\overline{\W}^{i}(\lambda) = \emptyset$ for $i=0,1,2$. This simply implies that 
$$H^{q}(\partial \rS, \tm_\lambda) = 0 \,, \qquad \forall q \,.$$




\section{Euler Characteristic}\label{Euler}

We quickly review the basics about Euler characteristic which is our important tool to obtain the information about Eisenstein cohomology discussed in the next section.  The homological Euler characteristic $\chi_h$ of a group $\Gamma$ with coefficients in a representation $\mc{V}$ is defined by 
\begin{equation}\label{eq:hec}
\chi_h(\Gamma,\mc{V})=\sum_{i=0}^{\infty}\, (-1)^{i} \, \mr{dim} \, H^{i}(\Gamma,\mc{V}). 
\end{equation}
For details on the above formula  see~\cite{Brown94, Serre71}. We recall the definition of orbifold Euler characteristic. If $\Gamma$ is torsion free, then the orbifold Euler characteristic is defined as $\chi_{orb}(\Gamma) = \chi_h(\Gamma)$. If $\Gamma$ has torsion elements and admits a finite index torsion free subgroup $\Gamma'$, then the orbifold Euler characteristic of $\Gamma$ is given by 
\begin{equation}\label{eq:orbeuler}
\chi_{orb}(\Gamma) = \frac{1}{[\Gamma : \Gamma']} \chi_h(\Gamma')\,.
\end{equation} 
One important fact is that, following Minkowski, every arithmetic group of rank greater than one contains a torsion free finite index subgroup and therefore the concept of orbifold Euler characteristic is well defined in this setting. If $\Gamma$ has torion elements then we make use of the following  formula discovered by Wall in~\cite{Wall}.
\begin{equation}\label{eq:hecT}
\chi_h(\Gamma,\mc{V} )=\sum_{(T)} \chi_{orb}(C(T)) tr(T^{-1}| \mc{V}) .
\end{equation}
 Otherwise, we use the formula described in equation~\eqr{hec}. The sum runs over all the conjugacy classes in $\Gamma$ of its torsion elements $T$, denoted by $(T)$, and $C(T)$ denotes the centralizer of $T$ in $\Gamma$. From now on, orbifold Euler characteristic $\chi_{orb}$ will be simply denoted by $\chi$. Orbifold Euler characteristic has the following properties.
 \begin{enumerate}
\item  If $\Gamma$  is finitely generated  torsion free group  then $\chi(\Gamma)$ is defined as $\chi_h(\Gamma,\Q).$
\item If $\Gamma$ is finite of order $\left| \Gamma \right|$ then $\chi(\Gamma)=\frac{1}{\left| \Gamma \right|}$.
\item Let $\Gamma$, $\Gamma_1$ and  $\Gamma_2$ be groups such that 
$ 1 \longrightarrow \Gamma_1 \longrightarrow \Gamma \longrightarrow \Gamma_2 \longrightarrow 1$ is exact then $\chi(\Gamma)= \chi(\Gamma_1) \chi(\Gamma_2)$.
 \end{enumerate}
 
 We now explain the use of the above properties by walking through the detailed computation of the Euler characteristic of $\mr{SL}_2(\Z)$ and $\mr{GL}_{2}(\Z)$ with respect to their highest weight representations, which we explain shortly. \\
 
\par We denote
\[ 
T_3 = \left( \begin{array}{rrr}
0 & 1  \\
-1 & -1 \\ 
\end{array}  \right),
T_4 = \left( \begin{array}{rrr}
0 & 1 \\
-1 & 0 \\ 
\end{array}  \right) \mbox{ and }
T_6 = \left( \begin{array}{rrr}
0 & -1  \\
1 & 1   \\ 
\end{array}  \right).
\]
Then following~\cite{Horozov2005}, we know that when $\Gamma$ is $\mr{GL}_n(\Z)$ (or $\mr{SL}_n (\Z)$ with $n$ odd) one has an expression of the form 
 \begin{equation}\label{eq:hnthor}
 \chi_{h}(\Gamma, \mc{V}) = \sum_{A} Res(f_A) \chi(C(A)) Tr(A^{-1}| \mc{V})\,,
 \end{equation}
where $f_A$ denotes the characteristic polynomial of the matrix $A$.

Now we will explain equation~\eqref{eq:hnthor} in detail. The summation is over all possible block diagonal matrices $A \in \Gamma$ satisfying the following conditions:
\begin{itemize}
\item The blocks in the diagonal belong to the set $\left\{1, -1, T_3, T_4, T_6\right\}$.
\item The blocks $T_3, T_4$ and $T_6$ appear at most once and $1, -1$ appear at most twice.
\item A change in the order of the blocks in the diagonal does not count as a different element.
\end{itemize}
So, for example, if $n > 10$, the sum is empty and $\chi_{h}(\Gamma, \mc{V}) = 0$.

In this case, one can see that every $A$ satisfying these properties has the same eigenvalues as $A^{-1}$. Even more every such $A$ is conjugate, over $\mathbb{C}$, to $A^{-1}$ and therefore $Tr(A^{-1}| \mc{V}) = Tr(A| \mc{V})$. We will use these facts in what follows.

\par For other groups, the analogous formula of~\eqr{hecT} is developed by Chiswell in \cite{Chiswell76}. Let us explain briefly the notation $Res(f)$. Let $f_1=\prod_i(x - \alpha_i)$ and $f_2=\prod_j(x-\beta_j)$ be two polynomials. Then by the resultant of $f_1$ and $f_2$, we mean $Res(f_1,f_2)=\prod_{i,j}(\alpha_i-\beta_j)$. If the characteristic polynomial $f$ is a power of an irreducible polynomial then we define $Res(f)=1$. Let $f=f_1 f_2\dots f_d$, where each  $f_i$ is a power of an irreducible polynomial over $\Q$ and they are relatively prime pairwise. Then, we define $Res(f)=\prod_{i<j} Res(f_i,f_j)$.

\subsection{Example : Euler Characteristic of $\mr{SL}_2(\Z)$ and  $\mr{GL}_2(\Z)$}

Consider the group $\overline{\Gamma}_0 = \mr{SL}_2(\Z)/ \{ \pm I_2\}$. For any subgroup $\G \in \mr{SL}_2(\Z)$ containing $-I_2$, we will denote by $\overline{\G}$ its corresponding subgroup in  $\overline{\Gamma}_0$, \ie $\overline{\G}=\G/\{ \pm I_2\}$. 

Consider the principal congruence subgroup $\overline{\G}(2)$. It is of index $6$  and torsion free. More precisely, ${}_{\overline{\G}(2)}\backslash \h $ is topologically $\mathbb{P}^{1}-\{0,1,\infty \}$. Therefore, $$\chi(\overline{\G}(2))= \chi(\mathbb{P}^{1}-\{0,1,\infty \}) = \chi(\mathbb{P}^{1}) -3 =2 -3 =-1.$$ 
Using this we immediately get $$\chi(\G(2))= \chi(\overline{\G}(2)) \chi(\{\pm I_2 \})= -1 \times \frac{1}{2}=-\frac{1}{2}.$$

Considering the following short exact sequence $$1\longrightarrow \G(2) \longrightarrow \mr{SL}_2(\Z) \longrightarrow \mr{SL}_2(\Z/2\Z)\longrightarrow 1$$

we obtain $\chi(\mr{SL}_2(\Z))= -\frac{1}{12}$ and $\chi(\overline{\Gamma}_0)= -\frac{1}{6}$. Similarly, the exact sequence $$1\longrightarrow \mr{SL}_2(\Z) \rightarrow \mr{GL}_2(\Z) \xrightarrow{det} \{ \pm I_2\} \longrightarrow 1$$ where $det : \mr{GL}_2(\Z) \longrightarrow \{ \pm I_2\}$ is simply the determinant map, gives $\chi(\mr{GL}_2(\Z))= -\frac{1}{24}$.

For any torsion free arithmetic subgroup $\Gamma \subset \mr{SL}_n(\R)$ we have the Gauss-Bonnet formula
$$\chi_h(\Gamma \backslash X) = \int_{\Gamma \backslash X} \omega_{GB}$$
where $\omega_{GB}$ is the Gauss-Bonnet-Chern differential form and $X = \mr{SL}_n(\mathbb{R}) / \mr{SO}(n, \mathbb{R})$, see~\cite{Harder71}. This differential form is zero if $n > 2$ and therefore for any torsion free congruence subgroup $\Gamma \subset \mr{SL}_n(\mathbb{Z})$, $\chi_h(\Gamma \backslash X) = 0$.
In particular, by the definition of orbifold Euler characteristic given by~\eqr{orbeuler},  this implies that $\chi(\mr{SL}_3(\mathbb{Z})) = 0$. We will make use of this fact in the calculation of the homological Euler characteristic of $\mr{SL}_3(\mathbb{Z})$. 

In the preceding analysis, all the $\chi(\G)$ have been computed with respect to the trivial coefficient system. In case of nontrivial coefficient system, the whole game of computing $\chi(\G)$ becomes slightly delicate and interesting. To deliver the taste of its complication we quickly motivate the reader by reviewing the computations of $\chi(\mr{SL}_2(\Z), V_m)$ and $\chi(\mr{GL}_2(\Z), V_{m_1, m_2})$ where $V_m$  and $V_{m_1, m_2}$ are the highest weight irreducible representations of $\mr{SL}_2$ and $\mr{GL}_2$ respectively. For notational convenience we will always denote the standard representation of $\mr{SL}_n(\Z)$ and $\mr{GL}_n(\Z)$ by $V$. In case of $\mr{SL}_2$ and $\mr{GL}_2$, all the highest weight representations are of the form $V_m := Sym^{m} V$ and  $V_{m_1, m_2}:=Sym^{m_1} V \otimes det^{m_2}$ respectively. Here $Sym^{m} V$ denotes the $m^{th}$-symmetric power of the standard representation $V$.

Let $\Phi_n$ be the $n$-th cyclotomic polynomial then we list all the characteristic polynomials of torsion elements in  $\mr{SL}_2(\Z)$ and $\mr{GL}_2(\Z)$  in the following table. 

{ \begin{center}
\scriptsize\renewcommand{\arraystretch}{2}
\begin{longtable}{|c|c|c|c|c|c|c|c|c|}
\hline
S.No.  &  Polynomial   &  Expanded form & In $\mr{SL}_2(\Z)$ & S.No.  &  Polynomial   &  Expanded form &   In $\mr{SL}_2(\Z)$   \\
\hline
\hline
1 &  $\Phi_{_1}^{^2}$  & $(x-1)^2$  & Yes & 2 & $\Phi_{_1} \Phi_{_2}$  &  $(x-1)  (x+1)$  &  No \\  
\hline
3 & $\Phi_{_2}^{2}$  &  $(x+1)^2$  & Yes & 4 & $\Phi_{_3}$  &  $x^2 +x +1$ & Yes  \\
\hline
5 & $\Phi_{_4}$ & $x^2+1$  & Yes & 6 & $\Phi_{_6}$  & $x^2 -x +1 $&  Yes\\
\hline
\caption{Torsion elements in $\mr{GL}_2(\Z)$.}\label{torsiojngl2}
\end{longtable}
\end{center}
}

Following equation~\eqr{hecT}, we compute the traces of all the torsion elements $T$ in $\mr{SL}_2(\Z)$ and $\mr{GL}_2(\Z)$ with respect to the highest weight representations $V_{m}$ and $V_{m_1, m_2}$ for $\mr{SL}_2$  and $\mr{GL}_2$ respectively. 

For any torsion element $T \in \mr{SL}_2(\Z) $, we define 
\begin{equation*}\label{eq:hnt} H_{m}(T) := Tr(T^{-1}| V_{m})  = Tr(T^{-1} | Sym^{m} V)  =  \sum_{a+b =m}  \lambda_{1}^{a} \lambda_{2}^{b}\,. \end{equation*}
where $\lambda_1 $ and $\lambda_2$ are the two eigenvalues of $T$. From now on we simply denote the representative  of $n$ torsion element $T$  by its characteristic polynomial $\Phi_{n}$.  Therefore, 
{\begin{center}
\scriptsize\renewcommand{\arraystretch}{2}
\begin{longtable}{|c|c|c|c|c|c|c|}
\hline
Case & $T$ &  $\Phi_n$ &$C(T)$   &  $\chi(C(T))$ & $H_{m}(T)$    \\
\hline
\hline
A & $I_2$ & $\Phi_1^{2}$& $\mr{SL}_2(\Z)$ &  $-\frac{1}{12}$ & $ m+1$   \\
\hline
B &$- I_2$ &$\Phi_2^{2}$  &$\mr{SL}_2(\Z)$ &  $-\frac{1}{12}$ & $(-1)^{m}(m+1)$    \\
\hline
C &${\tmt{0}{1}{-1}{-1}}^{\pm} $ & $\Phi_3$  &$C_6$    & $\frac{1}{6}$  &$ (1, -1, 0)$\footnote{$(1,-1,0)$ signifies $H_{3k}(T)=1$, $H_{3k+1}(T)=-1$ and $H_{3k+2}(T)=0$.}   \\
\hline
D &${\tmt{0}{1}{-1}{0}}^{\pm}$ &   $\Phi_4$ &$C_4$    & $\frac{1}{4}$ & $(1, 0, -1, 0 )$     \\
\hline
E &${\tmt{0}{-1}{1}{1}}^{\pm}$ & $\Phi_6$   &$C_6$    & $\frac{1}{6}$ & $ (1, 1, 0 , -1, -1, 0)$   \\
\hline
\caption{Traces of torsion elements of $\mr{SL}_2(\Z)$.}\label{tracetorsiojnsl2}
\end{longtable}
\end{center}
}

Now following equations~\eqr{hec} and~\eqr{hecT} 
\begin{eqnarray}\label{eq:chisl2}
 \chi_{h}(\mr{SL}_2(\Z), V_m ) & = & -\frac{1}{12} H_{m}(\Phi_1^{2})  - \frac{1}{12} H_{m}(\Phi_2^{2})  + \frac{2}{6} H_{m}(\Phi_3)  + \frac{2}{4} H_{m}(\Phi_4)  + \frac{2}{6} H_{m}(\Phi_6)\,.
  \end{eqnarray}
We obtain the values of $\chi_{h}(\mr{SL}_2(\Z), V_m) $ by computing each factor of the above equation~\eqr{chisl2} up to modulo 12. All these values can be found in the last column of the Table~\ref{eulersl2gl2} below.

Similarly, let us discuss the $\chi_{h}(\mr{GL}_2(\Z), V_{m_1, m_2})$. One has the following table,
{ \begin{center}
\scriptsize\renewcommand{\arraystretch}{2.3}
\begin{longtable}{|c|c|c|c|c|c|c|c|c|}
\hline
Case & $T$ & $\Phi_n$  &$C(T)$   &  $\chi(C(T))$ & $Res(f)$    & $ H_{m_1,m_2}(T) $   \\
\hline
\hline
A & $I_2$ &$\Phi_1^2$ & $\mr{GL}_2(\Z)$ &  $-\frac{1}{24}$& $1$  & $m_1 +1$\\
\hline
B &$- I_2$ & $\Phi_2^2$& $\mr{GL}_2(\Z)$ &  $-\frac{1}{24}$ & $1$ & $ (-1)^{m_1} (m_1 +1)$ \\
\hline
C &$\tmt{1}{0}{0}{-1}$ & $\Phi_1 \Phi_2$&$C_2 \times C_2$ & $\frac{1}{4}$& 2 & $(-1)^{m_2}$ if $m_1$ is even; $0$ otherwise \\
\hline
D &$\tmt{0}{1}{-1}{-1}$ &  $\Phi_3$ &$C_6$    & $\frac{1}{6}$ & $1$ & 
$(1,-1,0)$\footnote{$(1,-1,0)$ signifies that for $m_1=3k$ or $3k+1$ or $3k+2$, we have $H_{3k,m_2}(T)=1$, $H_{3k+1,m_2}(T)=-1$ and
$H_{3k+2,m_2}(T)=0$, independently of $m_2$. Similarly, $m_1$ is taken modulo $4$ in Case $E$, and modulo $6$ in Case $F$.}\\
\hline
E &$\tmt{0}{1}{-1}{0}$ &  $\Phi_4$ & $C_4$    & $\frac{1}{4}$ & 1 & $(1,0,-1,0)$\\
\hline
F &$\tmt{0}{-1}{1}{1}$ & $\Phi_6$  & $C_6$    & $\frac{1}{6}$ & 1 & $(1,1,0,-1,-1,0)$\\
\hline
\caption{Traces of torsion elements of $\mr{GL}_2(\Z)$.}\label{tracestorsiojngl2}
\end{longtable}
\end{center}
}

Now following equations~\eqr{hec} and~\eqr{hnthor}
{
\begin{eqnarray}\label{eq:chigl2}
 \chi_{h}(\mr{GL}_2(\Z), V_{m_1, m_2} ) & = & -\frac{1}{24} H_{m_1, m_2}(\Phi_1^{2})  - \frac{1}{24} H_{m_1, m_2}(\Phi_2^{2})  - \frac{2}{4} H_{m_1, m_2}(\Phi_1 \Phi_2)  \\ \nonumber
 & + & \frac{1}{6} H_{m_1, m_2}(\Phi_3)  + \frac{1}{4} H_{m_1, m_2}(\Phi_4)  + \frac{1}{6} H_{m_1, m_2}(\Phi_6)\,.
  \end{eqnarray}
}

Same as in the case of $\mr{SL}_2(\Z)$, we obtain the values of $\chi_{h}(\mr{GL}_2(\Z), V_{m_1, m_2}) $ by computing each factor of the above equation~\eqr{chigl2} up to $m_1$ modulo 12 and $m_2$ modulo 2. All these values are encoded in the second and third column of the Table~\ref{eulersl2gl2} below. Note that in what follows $V_m$ will denote $V_{m,0}$ when it is considered as a representation of $\mathrm{GL}_2$.

It is well known that 
\begin{equation}\label{eq:blah}
S_{m+2}=H^1_{cusp}(\mr{GL}_2(\Z),V_m \otimes \mathbb{C})\subset H^1_!(\mr{GL}_2(\Z),V_m\otimes \mathbb{C})\subset H^1(\mr{GL}_2(\Z),V_m\otimes \mathbb{C}).
\end{equation}
One can show that in fact these inclusions are isomorphisms because $H^1(\mr{GL}_2(\Z),\mathbb{C}) = 0$, and for $m > 0$ we have $H^0(\mr{GL}_2(\Z),V_m) = H^2(\mr{GL}_2(\Z),V_m) = 0$ and therefore
\begin{equation*}
\dim H^1(\mr{GL}_2(\Z),V_m)= -\chi_h(\mr{GL}_2(\Z),V_m)= \dim S_{m+2}.
\end{equation*}
Hence, we may conclude that for all $m$ $$H^1_{cusp}(\mr{GL}_2(\Z),V_m \otimes \mathbb{C})= H^1(\mr{GL}_2(\Z),V_m \otimes \mathbb{C}) =  H^1_!(\mr{GL}_2(\Z),V_m) \otimes \mathbb{C}.$$ 

\begin{rmk}
Note that if we do not want to get into the transcendental aspects of the theory of cusp forms (Eichler-Shimura isomorphism) then we could get the dimension of $S_{m+2}$ by using the information given in Section 2.1.3 from Chapter 2 of~\cite{Harder2018}. 
\end{rmk}

\newpage

{\begin{center}
\scriptsize\renewcommand{\arraystretch}{2}
\begin{longtable}{|c|c|c|c|c|c|c|c|c|}
\hline
$m=12\ell +k $ & $ \chi_{h}(\mr{GL}_2(\Z), V_m ) $  & $ \chi_{h}(\mr{GL}_2(\Z), V_m \otimes det ) $ &  $ \chi_{h}(\mr{SL}_2(\Z), V_m )$\\
\hline
\hline
$k =0 $   & $-\ell+1 $ &  $-\ell$  &  $-2\ell +1$ \\
\hline
$k =1 $  & 0   &  0 & 0 \\
\hline
$k =2 $ &  $-\ell$    & $ -\ell -1$  &  $-2\ell -1$ \\
\hline
$k =3 $ & 0  & 0   & 0  \\
\hline
$k =4$&  $-\ell$   & $ -\ell -1$   & $-2\ell -1$ \\
\hline
$k =5 $ & 0  & 0  & 0  \\
\hline
$k =6 $ & $-\ell$     & $ -\ell -1$   & $-2\ell -1$   \\
\hline
$k =7 $& 0 & 0  & 0 \\
\hline
$k =8$ &  $-\ell$    &  $ -\ell -1$  & $-2\ell -1$ \\
\hline
$k =9$ & 0 & 0  & 0 \\
\hline
$k =10$ &  $-\ell-1$     &  $ -\ell -2$   & $-2\ell -3$  \\
\hline
$k =11$ &  0 & 0  & 0 \\
\hline
\caption{Euler characteristics of $\mr{SL}_2(\Z)$ and $\mr{GL}_2(\Z)$.}\label{eulersl2gl2}
\end{longtable}
\end{center}
}

We present the following isomorphism for intuition. One can recover a simple proof by using the data of the Table~\ref{eulersl2gl2} and the Kostant formula. 
\[\begin{tabular}{llllll}
$H^{1}(\mr{SL}_2(\Z), V_m)$ 
& = &   $H^{1}_{Eis}(\mr{SL}_2(\Z), V_m)$ $\oplus$  $H^{1}_{!}(\mr{SL}_2(\Z), V_m)$  \\
& = &   $H^{1}_{Eis}(\mr{GL}_2(\Z), V_m \otimes det)$  $\oplus$  $H^{1}_{!}(\mr{SL}_2(\Z), V_m)$  \\
& = &   $H^{1}_{Eis}(\mr{GL}_2(\Z), V_m\otimes det)$  $\oplus$  $H^{1}_{!}(\mr{GL}_2(\Z), V_m\otimes det)$ $\oplus$ $H^{1}_{!}(\mr{GL}_2(\Z), V_m)$\\
&=& $H^{1}_{Eis}(\mr{GL}_2(\Z), V_m\otimes det)$  $\oplus$   $H^{1}_{!}(\mr{GL}_2(\Z), V_m)$ $\oplus$ $H^{1}_{!}(\mr{GL}_2(\Z), V_m)$
\end{tabular}
\]

\subsection{Torsion Elements in $\mr{SL}_{3}(\Z)$}\label{torsionsl3}

Following equation~\eqr{hecT} and above discussion, we know that in order to  compute $\chi_{h}(\mr{SL}_{3}(\Z), \mcV)$ with respect to the coefficient system $\mcV$, we need to know  the conjugacy classes of all torsion elements. To do that we divide the study into the possible characteristic polynomials of the representatives of these conjugacy classes, and these are:
{\begin{center}
\scriptsize\renewcommand{\arraystretch}{2.2}
\begin{longtable}{|c|c|c|c|c|c|c|c|c|}
\hline
S.No.  &  Polynomial   &  Expanded form & In $\mr{SL}_3(\Z)$ & S.No.  &  Polynomial   &  Expanded form & In $\mr{SL}_3(\Z)$ \\
\hline
\hline
1 &  $\Phi_{_1}^{^3}$  & $(x-1)^3$ & Yes & 2 & $\Phi_{_1}^{^2} \Phi_{_2}$  &$(x-1)^2 (x+1)$  & No \\  
\hline
3 & $\Phi_{_1} \Phi_{_2}^{2}$  & $(x-1) (x+1)^2$  & Yes & 4 & $\Phi_{_1}\Phi_{_3}$  & $(x-1) (x^2 + x +1)$ & Yes \\
\hline
5 & $\Phi_{_1}\Phi_{_4}$ &  $(x-1) (x^2 +1)$  & Yes & 6 & $\Phi_{_1} \Phi_{_6}$  &  $(x-1) (x^2 - x +1)$ & Yes \\
\hline
7 & $\Phi_{_2}^{^3} $  &  $(x+1)^3$  &  No & 8 & $\Phi_{_2} \Phi_{_3}$  & $(x+1) (x^2 + x +1)$ & No\\
\hline
9 & $\Phi_{_2} \Phi_{_4}$  & $(x+1) (x^2 +1)$  &No & 10 & $\Phi_{_2} \Phi_{_6}$  & $(x+1) (x^2 - x +1)$& No  \\
\hline
\caption{Torsion elements in $\mr{GL}_3(\Z)$.}\label{torsiojngl3}
\end{longtable}
\end{center}
}

Following equation~\eqr{hecT}, we compute the traces $Tr(T^{-1}| \m_{\lambda})$ of all the torsion elements $T$ in $\mr{SL}_3(\Z)$ and $\mr{GL}_3(\Z)$ with respect to highest weight coefficient system $\m_{\lambda}$ where $\lambda = m_1 \epsilon_1 + m_2 (\epsilon_1+\epsilon_2)$ and $\lambda= m_1 \epsilon_1 + m_2 (\epsilon_1+\epsilon_2)  + m_3 (\epsilon_1+\epsilon_2+\epsilon_3)$ for $\mr{SL}_3$  and $\mr{GL}_3$, respectively. 

Before moving to the next step, we will explain the reader about the use of the notation $\m_{\lambda}$. For convenience and to make the role of the coefficients $m_1, m_2$ in case of $\mr{SL}_3(\Z)$ and $m_1, m_2, m_3$  in case of $\mr{GL}_3(\Z)$ as clear as possible in the highest weight $\lambda$, we will often use  these coefficients in the subscript of the notation $\m_{\lambda}$ in place of $\lambda$, \ie we write 

\begin{eqnarray*}
\m_{\lambda} : =  \left\{\begin{array}{cccc}
& \m_{m_1, m_2}\,, & \mr{for} \quad \mr{SL}_3 \\
&\\
&  \m_{m_1, m_2, m_3}\,,& \mr{for} \quad \mr{GL}_3
\end{array}\qquad\,. \right .
\end{eqnarray*}

For any torsion element $T \in \mr{SL}_3(\Z) $, we define 
\begin{equation*}\label{eq:hnt1} H_{m}(T) := Tr(T^{-1}| \m_{m})  = Tr(T^{-1} | Sym^{m} V)  =  \sum_{a+b+c=m}  \mu_{1}^{a} \mu_{2}^{b}  \mu_{3}^{c}\,. \end{equation*}
where $\mu_1, \mu_2 $ and $\mu_3$ are the eigenvalues of $T$ and $V$ denotes the standard representation of $\mr{SL}_3$ (and $\mr{GL}_3$). Note that $\m_{m}$ above simply denotes the highest weight representation $\m_{m,0}$ of $\mr{SL}_3$. We also use the notation
\begin{equation}
H_{m_1, m_2}(\Phi) := Tr(T^{-1}| \m_{m_1, m_2})  \quad \mbox{ and } \quad  H_{m_1, m_2, m_3}(\Phi) := Tr(T^{-1}| \m_{m_1, m_2, m_3})\,, \nonumber 
\end{equation}
where $T$ is a torsion element with characteristic polynomial $\Phi$.
Therefore, 
{ \begin{center}
\scriptsize\renewcommand{\arraystretch}{2.2}
\begin{longtable}{|c|c|c|c|c|c|c|c|c|}
\hline
Case & $T$ &  $\Phi_n$ &$C(T)$   &  $\chi(C(T))$ & $Res(f,g)$ & $Res(f,g) \chi(C(T))$      \\
\hline
\hline
A & $I_3$ & $\Phi_1^{3}$& $\mr{SL}_3(\Z)$ & 0 & 0 & 0 \\
\hline
B &$ \left( \begin{array}{ccc}
1 & 0 & 0  \\
0 & -1 & 0  \\
0 & 0 & -1   \\ 
\end{array}  \right)$ &$\Phi_1 \Phi_2^{2}$  &$\mr{GL}_2(\Z)$ &  $-\frac{1}{24}$ & 4  & $-\frac{1}{6}$     \\
\hline
C & $\left( \begin{array}{ccc}
1 & 0 & 0  \\
0 & 0 & -1  \\
0 & 1 & -1  \\ 
\end{array}  \right)$ & $\Phi_1 \Phi_3$  &$C_6$    & $\frac{1}{6}$ & 3 & $\frac{1}{2}$  \\
\hline
D &$\left( \begin{array}{ccc}
1 & 0 & 0  \\
0 & 0 & 1  \\
0 & -1 & 0  \\ 
\end{array}  \right)$ &   $\Phi_1 \Phi_4$ &$C_4$    & $\frac{1}{4}$ & 2  & $\frac{1}{2}$      \\
\hline
E &$\left( \begin{array}{ccc}
1 & 0 & 0  \\
0 & 0 & 1  \\
0 & -1 & 1  \\ 
\end{array}  \right)$ & $\Phi_1 \Phi_6$   &$C_6$    & $\frac{1}{6}$ & 1 & $\frac{1}{6}$  \\
\hline
\caption{Torsion elements of $\mr{SL}_3(\Z)$.}\label{torsion-sl3}
\end{longtable}
\end{center}
}

Let $\m_{m_1, m_2}$ denote the irreducible representation of $\mr{SL}_3$ with highest weight $\lambda = m_1 \epsilon_1 + m_2 (\epsilon_1+\epsilon_2)$. Following equations~\eqr{hec} and \eqr{hnthor} we have
{
\begin{eqnarray}\label{eq:sl3chi}
 \chi_{h}(\mr{SL}_3(\Z), \m_{m_1, m_2} ) & = &  - \frac{1}{6} H_{m_1, m_2}(\Phi_1 \Phi_2^2)  + \frac{1}{2} H_{m_1, m_2}(\Phi_1 \Phi_3)  \\ \nonumber
 & + & \frac{1}{2} H_{m_1, m_2}(\Phi_1 \Phi_4)+\frac{1}{6} H_{m_1, m_2}(\Phi_1 \Phi_6).
  \end{eqnarray}
  }
 To obtain the complete information of  $ \chi_{h}(\mr{SL}_3(\Z), \m_{m_1, m_2} ) $, let us compute the $H_{m_1, m_2}(\Phi_1 \Phi_2^2) $,  $H_{m_1, m_2}(\Phi_1 \Phi_3) $, $H_{m_1, m_2}(\Phi_1 \Phi_4) $ and $H_{m_1, m_2}(\Phi_1 \Phi_6) $. One could do this by using the Weyl character formula as defined in Chapter~24 of~\cite{FultonHarris},
\begin{equation*} H_{m_1, m_2}(\Phi_1 \Phi_k) = det \tmt{H_{m_1+m_2}{(\Phi_1 \Phi_k)}}{H_{m_1+m_2+1}(\Phi_1 \Phi_k)}{H_{m_2 -1}(\Phi_1 \Phi_k)}{H_{m_2}(\Phi_1 \Phi_k)},\end{equation*}  
but we will use another argument to calculate these traces. For that we consider the case $\mr{GL}_3(\Z)$ and obtain the needed results as a corollary. 
 
 \begin{lema}
 Let $\xi_k = e^{\frac{2\pi i}{k}}$, then $$H_{m_1, m_2, m_3}(\Phi_1 \Phi_k) = \sum_{p_1 = m_2+m_3}^{m_2+m_2+m_3} \sum_{p_2 = m_3} ^{m_2+m_3} \sum_{q = p_2}^{p_1} \xi_k^{2q - (p_1 + p_2)},$$
for $k = 3, 4, 6$ and $$H_{m_1, m_2, m_3}(\Phi_1 \Phi_2^2) = \sum_{p_1 = m_2+m_3}^{m_1+m_2+m_3} \sum_{p_2 = m_3} ^{m_2+m_3} \sum_{q = p_2}^{p_1} \xi_2^{2q - (p_1 + p_2)}.$$
\end{lema}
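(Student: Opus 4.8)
The plan is to identify $Tr(T^{-1}\mid \m_{m_1,m_2,m_3})$ with a $\mr{GL}_3$-character value, i.e.\ a Schur polynomial evaluated at the eigenvalues of $T$, and then to expand that Schur polynomial by the Gelfand--Tsetlin (branching) rule so that the nested triple sum in the statement appears verbatim.

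First I would record the eigenvalues. A matrix with characteristic polynomial $\Phi_1\Phi_k$ has eigenvalues $1,\xi_k,\xi_k^{-1}$ for $k=3,4,6$, since the two roots of the cyclotomic factor $\Phi_k$ are the complex-conjugate primitive $k$-th roots of unity $\xi_k$ and $\xi_k^{-1}$; likewise $\Phi_1\Phi_2^2$ gives eigenvalues $1,-1,-1=1,\xi_2,\xi_2^{-1}$. In particular the eigenvalue multiset is stable under inversion, so $Tr(T^{-1}\mid \m_\lambda)=Tr(T\mid \m_\lambda)$ equals the value of the character $s_\lambda$ of the irreducible representation $\m_\lambda$ at these eigenvalues. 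Writing the highest weight in the $\epsilon$-basis, $\lambda=(m_1+m_2+m_3)\epsilon_1+(m_2+m_3)\epsilon_2+m_3\epsilon_3$, so that $(\lambda_1,\lambda_2,\lambda_3)=(m_1+m_2+m_3,\,m_2+m_3,\,m_3)$ is dominant.

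Next I would expand $s_\lambda(x_1,x_2,x_3)$ through the two-step branching $\mr{GL}_3\downarrow\mr{GL}_2\downarrow\mr{GL}_1$. Restriction to $\mr{GL}_2$ yields
\[
s_{(\lambda_1,\lambda_2,\lambda_3)}(x_1,x_2,x_3)=\sum_{\lambda_2\le p_1\le \lambda_1}\ \sum_{\lambda_3\le p_2\le \lambda_2} s_{(p_1,p_2)}(x_1,x_2)\, x_3^{\,\lambda_1+\lambda_2+\lambda_3-p_1-p_2},
\]
the two outer constraints being exactly the interlacing conditions, while the $\mr{GL}_2$ character itself expands as $s_{(p_1,p_2)}(x_1,x_2)=\sum_{p_2\le q\le p_1} x_1^{q}x_2^{\,p_1+p_2-q}$. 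Substituting the eigenvalue assignment $(x_1,x_2,x_3)=(\xi_k,\xi_k^{-1},1)$ collapses each monomial to $\xi_k^{\,q}\,\xi_k^{-(p_1+p_2-q)}=\xi_k^{\,2q-(p_1+p_2)}$, and after replacing $\lambda_1,\lambda_2,\lambda_3$ by $m_1+m_2+m_3,\,m_2+m_3,\,m_3$ the three summation ranges become precisely those in the statement. The identical computation with $\xi_2=-1$ handles $\Phi_1\Phi_2^2$.

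The content is entirely standard, so I expect no serious obstacle; the only point demanding care is the bookkeeping of conventions. One must assign the eigenvalues to $x_1,x_2,x_3$ so that the symmetric exponent $2q-(p_1+p_2)$ emerges — the choice $(\xi_k,\xi_k^{-1},1)$ achieves this — and one must note that the repeated eigenvalue in the $\Phi_1\Phi_2^2$ case causes no difficulty, since $s_\lambda$ is a genuine symmetric polynomial and may be evaluated at coincident arguments. One could instead start from the Weyl character (bialternant) formula quoted just before the lemma and expand the $2\times 2$ minor, but the branching route produces the nested triple sum directly.
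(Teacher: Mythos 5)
Your proposal is correct and is essentially the paper's own argument: the paper likewise reduces the computation to the trace of $\mathrm{diag}(\xi_k,\xi_k^{-1},1)$ on $\m_{m_1,m_2,m_3}$ and sums the weight $\xi_k^{2q-(p_1+p_2)}$ over the Gelfand--Tsetlin data $(p_1,p_2,q)$ subject to the same interlacing inequalities, obtained there from the explicit Gelfand--Tsetlin basis and the exponentiated action of $E_{1,1}-E_{2,2}$ rather than from the character-level branching rule $\mathrm{GL}_3\downarrow\mathrm{GL}_2\downarrow\mathrm{GL}_1$. These are two presentations of the same decomposition (and you implicitly correct the paper's typo $m_2+m_2+m_3$ in the upper summation limit to $m_1+m_2+m_3$), so there is nothing further to add.
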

 
\begin{proof}
We use the description of $\m_{m_1, m_2, m_3}$ given in~\cite{Gelfand50}. In particular, one has a basis $$\left\{ L\left(  \begin{matrix}{p_1}{p_2}\\{q} \end{matrix}  \right) \mid m_1+m_2+m_3 \geq p_1 \geq m_2+m_3, m_2+m_3 \geq p_2 \geq m_3, p_1 \geq q \geq p_2 \right\}$$
such that under the action of $\mathfrak{gl}_3$, $$(E_{1, 1} - E_{2, 2})\left(L\left( \begin{matrix}{p_1}{p_2}\\{q} \end{matrix}   \right)\right) = (2q - (p_1 + p_2))L\left( \begin{matrix}{p_1}{p_2}\\{q} \end{matrix}   \right).$$

If we denote by $\rho_{m_1, m_2, m_3}$ the representation corresponding to $\m_{m_1, m_2, m_3}$ then the diagram 
$$\xymatrixcolsep{6pc}\xymatrix{
\mr{GL}_3(\C) \ar@{<-}^{exp}[d]  \ar@{->}^{\rho_{m_1, m_2, m_3}}[r] & \mr{GL}(\m_{m_1, m_2, m_3}) \ar@{<-}^{exp}[d] \\
\mathfrak{gl}_3(\C)  \ar@{->}^{d \rho_{m_1, m_2, m_3}}[r] & \mathfrak{gl}(\m_{m_1, m_2, m_3}) }$$ 
is commutative. Therefore 
$$\left( \begin{array}{rrr}
\xi_k &  0 & 0  \\
0 & \xi_k^{-1} & 0  \\
0 &  0 & 1  \\ 
\end{array}  \right) L\left( \begin{matrix}{p_1}{p_2}\\{q} \end{matrix}   \right) = \xi_k^{2q - (p_1 + p_2)} L\left( \begin{matrix}{p_1}{p_2}\\{q} \end{matrix}   \right) $$
and the result follows simply by using the fact that $$H_{m_1, m_2, m_3}(\Phi_1 \Phi_k) = Tr\left(\left( \begin{array}{rrr}
\xi_k &  0 & 0  \\
0 & \xi_k^{-1} & 0  \\
0 &  0 & 1  \\ 
\end{array}  \right), \m_{m_1, m_2, m_3}\right).$$ 
\end{proof} 

We denote $C_k(p_1, p_2) = \sum_{q = p_2}^{p_1} \xi_k^{2q - (p_1 + p_2)}$, for $k = 2, 3, 4, 6$. By using the fact that $$\xi_k^{2(\frac{p_1 + p_2}{2} - j) - (p_1 + p_2) } = \xi_k^{-2j} = (\xi_k^{2j})^{-1} = (\xi_k^{2(\frac{p_1 + p_2}{2} + j) - (p_1 + p_2) })^{-1} \quad \forall j \in \mathbb{Z},$$ one has that
\begin{equation*}\label{Formula}
C_k(p_1, p_2) = \left\{\begin{array}{cccc}  
& 1 + \sum_{q = p_2}^{\frac{p_1+p_2}{2}-1} \left(\xi_k^{2q - (p_1 + p_2)} + (\xi_k^{2q - (p_1 + p_2)})^{-1} \right)\,,  &  p_1 \equiv p_2 (mod \, 2)  \\ 
&\\
& \sum_{q = p_2}^{\frac{p_1+p_2-1}{2}} \left(\xi_k^{2q - (p_1 + p_2)} + (\xi_k^{2q - (p_1 + p_2)})^{-1} \right) \,, & otherwise\\
\end{array}\qquad\,. \right.  
 \end{equation*}

\begin{lema}
\begin{equation*}
C_6(p_1, p_2) = \left\{\begin{array}{rrrrrr}  
& 1 \,,  &  p_1 - p_2 \equiv 0 (mod \, 6)  \\ 
&\\
& 1 \,,  &  p_1 - p_2 \equiv 1 (mod \, 6)  \\
&\\
& 0 \,,  &  p_1 - p_2 \equiv 2 (mod \, 6)  \\ 
&\\
& -1 \,,  &  p_1 - p_2 \equiv 3 (mod \, 6)  \\
&\\
& -1 \,,  &  p_1 - p_2 \equiv 4 (mod \, 6)  \\ 
&\\
& 0 \,,  &  p_1 - p_2 \equiv 5 (mod \, 6)  \\
\end{array}\qquad\,, \right .  
 \end{equation*}
\end{lema}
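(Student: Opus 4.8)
The plan is to collapse the defining sum into a single geometric series in the primitive cube root of unity $\xi_6^2$, and then to read off the answer according to the residue of $p_1-p_2$ modulo $6$. First I would reindex: since the Gelfand--Tsetlin ranges force $p_1\geq p_2$, I set $m=p_1-p_2\geq 0$ and substitute $j=q-p_2$, so that the exponent $2q-(p_1+p_2)$ becomes $2j-m$. This gives
\[
C_6(p_1,p_2)=\sum_{j=0}^{m}\xi_6^{2j-m}=\xi_6^{-m}\sum_{j=0}^{m}\bigl(\xi_6^{2}\bigr)^{j},
\]
which already shows that $C_6(p_1,p_2)$ depends on $p_1,p_2$ only through the difference $m=p_1-p_2$, as the statement implicitly asserts.

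Next I would evaluate the inner sum. Since $\xi_6^{2}=e^{2\pi i/3}=:\omega$ is a primitive cube root of unity with $1+\omega+\omega^2=0$, the partial sums $\sum_{j=0}^{m}\omega^{j}$ are periodic in $m$ with period $3$: they equal $1$ when $m\equiv 0\pmod 3$, they equal $1+\omega$ when $m\equiv 1\pmod 3$, and they vanish when $m\equiv 2\pmod 3$. The one small identity worth recording here is $1+\omega=e^{\pi i/3}=\xi_6$, which I would check directly from $\omega=-\tfrac12+\tfrac{\sqrt3}{2}\,i$; it is what lets the surviving terms recombine cleanly after multiplying back by $\xi_6^{-m}$.

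Finally I would multiply by the prefactor $\xi_6^{-m}$ and split into the six residue classes of $m$ modulo $6$. The point is that $\xi_6^{-m}$ has period $6$, so combining it with the period-$3$ geometric sum produces an answer of period $6$: for $m\equiv 0$ one gets $1\cdot 1=1$; for $m\equiv 1$, $\xi_6^{-1}\cdot\xi_6=1$; for $m\equiv 2$ the sum vanishes; for $m\equiv 3$, $\xi_6^{-3}\cdot 1=-1$; for $m\equiv 4$, $\xi_6^{-4}\cdot\xi_6=\xi_6^{-3}=-1$; and for $m\equiv 5$ the sum vanishes again. This reproduces exactly the six cases in the statement.

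The computation is entirely routine, so there is no genuine obstacle; the only thing requiring a little care is keeping the two periodicities straight, namely the period $3$ coming from the cube root of unity $\omega$ and the period $6$ coming from $\xi_6^{-m}$, so that the case analysis is correctly organized modulo $6$ rather than modulo $3$. Should one prefer to avoid the reindexing, the same result follows by feeding $m\equiv p_1-p_2$ into the closed form for $C_k(p_1,p_2)$ displayed just before the lemma, but the geometric-series route is the shortest.
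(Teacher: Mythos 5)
Your computation is correct; I checked all six residue classes and they match the stated values. Your route is genuinely different from the paper's, though both are routine root-of-unity manipulations. The paper works from the symmetrized formula for $C_k(p_1,p_2)$ displayed just before the lemma, i.e.\ it groups the sum into pairs $\xi_6^{\ell}+\xi_6^{-\ell}$, tabulates these six values modulo $6$, and then observes that any three consecutive such pairs cancel, $\sum_{j=1}^{3}\bigl(\xi_6^{\ell+2j}+\xi_6^{-(\ell+2j)}\bigr)=0$, which forces $C_6$ to depend only on $p_1-p_2$ modulo $6$; the six base values are then read off. You instead bypass the pairing entirely and collapse the sum to $\xi_6^{-m}\sum_{j=0}^{m}\omega^{j}$ with $\omega=\xi_6^{2}$ a primitive cube root of unity, which gives a clean closed form and makes the interplay of the two periods (period $3$ from the partial geometric sums, period $6$ from the prefactor $\xi_6^{-m}$) completely transparent; the identity $1+\omega=\xi_6$ is the only non-mechanical step and you verify it. Your argument is arguably shorter and has the added benefit that the same two-line computation with $\xi_4$ and $\xi_3$ in place of $\xi_6$ immediately yields the companion lemma for $C_4$ and $C_3$, whereas the paper's triple-cancellation trick has to be re-run with period $k$ in each case. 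Either approach is perfectly adequate here.
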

\begin{proof}
One can check that 
 \begin{equation*}
\xi_6^{\ell} + \xi_6^{-\ell}= \left\{\begin{array}{rrrrrr}  
& 2 \,,  &  \ell \equiv 0 (mod \, 6)  \\ 
&\\
& 1 \,,  &  \ell \equiv 1 (mod \, 6)  \\
&\\
& -1 \,,  &  \ell \equiv 2 (mod \, 6)  \\ 
&\\
& -2 \,,  &  \ell \equiv 3 (mod \, 6)  \\
&\\
& -1 \,,  &  \ell \equiv 4 (mod \, 6)  \\ 
&\\
& 1 \,,  &  \ell \equiv 5 (mod \, 6)  \\
\end{array}\qquad\,, \right .  
 \end{equation*}
This implies that for every integer $\ell$, $$\sum_{j=1}^3 \xi_6^{\ell +2j} + \xi_6^{-(\ell+2j)} = 0,$$ in other words, the sum of three consecutive terms in the formula for $C_6(p_1, p_2)$ is zero and $C_6(p_1, p_2)$ only depends on $p_1-p_2$ modulo $6$.
\end{proof} 

Following the similar procedure we deduce the values of $C_4(p_1, p_2) $ and $C_3(p_1, p_2)$ which we summarize in the following lemma.

\begin{lema}
\begin{equation*}
C_4(p_1, p_2) = \left\{\begin{array}{rrrrrr}  
& 1 \,,  &  p_1 - p_2 \equiv 0 (mod \, 4)  \\ 
&\\
& 0 \,,  &  p_1 - p_2 \equiv 1 (mod \, 4)  \\
&\\
& -1 \,,  &  p_1 - p_2 \equiv 2 (mod \, 4)  \\ 
&\\
& 0 \,,  &  p_1 - p_2 \equiv 3 (mod \, 4)  \\
\end{array}\qquad\, \right.,  
 \end{equation*}
and
\begin{equation*}
C_3(p_1, p_2) = \left\{\begin{array}{rrrrrr}  
& 1 \,,  &  p_1 - p_2 \equiv 0 (mod \, 3)  \\ 
&\\
& -1 \,,  &  p_1 - p_2 \equiv 1 (mod \, 3)  \\
&\\
& 0 \,,  &  p_1 - p_2 \equiv 2 (mod \, 3)  \\
\end{array}\qquad\,. \right .  
 \end{equation*}
\end{lema}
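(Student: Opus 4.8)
The plan is to mirror the proof of the preceding lemma for $C_6$. The first step is to note that $C_k(p_1,p_2)$ depends only on the difference $d = p_1 - p_2$: substituting $q = p_2 + j$ for $j = 0, 1, \dots, d$ turns the exponent $2q - (p_1+p_2)$ into $2j - d$, so
\[
C_k(p_1,p_2) = \sum_{j=0}^{d} \xi_k^{\,2j-d}.
\]
The exponents $2j - d$ are symmetric about $0$, so, exactly as in the bracketed formula recorded before the $C_6$ lemma, the sum splits into conjugate pairs $\xi_k^{\ell} + \xi_k^{-\ell}$ together with a central term $1$ when $d$ is even and none when $d$ is odd. This reduces both assertions to understanding the finitely many possible residues.

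For $C_4$ I would use $\xi_4^{2} = -1$. The conjugate pair $\xi_4^{\ell} + \xi_4^{-\ell}$ is $2$ when $\ell \equiv 0 \pmod 4$ and $-2$ when $\ell \equiv 2 \pmod 4$ (and $0$ for odd $\ell$), so two consecutive pairs cancel and the whole sum is unchanged when $d$ is increased by $4$; hence $C_4(p_1,p_2)$ depends only on $d \bmod 4$. It then remains to evaluate the four cases $d = 0, 1, 2, 3$ directly, which give $1, 0, -1, 0$ respectively.

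For $C_3$ the argument is the same with $\xi_3^{2} + \xi_3 + 1 = 0$: here $\xi_3^{\ell} + \xi_3^{-\ell}$ equals $2$ for $\ell \equiv 0 \pmod 3$ and $-1$ otherwise, and three consecutive summands of $\sum_j \xi_3^{2j-d}$ contribute $\xi_3^{\ell}(1 + \xi_3^{2} + \xi_3^{4}) = 0$, so $C_3(p_1,p_2)$ depends only on $d \bmod 3$; the base cases $d = 0, 1, 2$ give $1, -1, 0$. A uniform alternative for both parts is to sum the geometric series outright, $C_k = \xi_k^{-d}\,\frac{(\xi_k^{2})^{d+1}-1}{\xi_k^{2}-1}$, and read off the value from $\xi_4^{2} = -1$ and $\xi_3^{2} = \overline{\xi_3}$.

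The computation is entirely elementary, so I expect no real obstacle; the only place demanding care is the bookkeeping, namely keeping track of the parity of $d$ (which decides whether a central $1$ survives) and of how the residue of the number of summands $d+1$ lines up with the residue of $d$ modulo $4$ or $3$. Organizing the base-case evaluations so that these two effects are consistently accounted for is the one step where a sign or an off-by-one error could slip in.
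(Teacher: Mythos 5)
Your proposal is correct and follows essentially the same route as the paper, which proves the lemma "following the similar procedure" as the $C_6$ case: establish periodicity of $C_k$ in $p_1-p_2$ modulo $k$ by showing that blocks of consecutive terms (two for $k=4$, three for $k=3$) sum to zero, then evaluate the finitely many base cases. Your computations of the pairs $\xi_k^{\ell}+\xi_k^{-\ell}$ and of the base cases $d=0,\dots,k-1$ all check out.
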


\begin{rmk} For $k=3, 4,6$, the sum of the $C_k(p_1, p_2)$ for the different possible congruences of $p_1 - p_2$ modulo $k$ is zero, and this implies that
$$H_{m_1, m_2, m_3}(\Phi_1 \Phi_k) = \sum_{p_1 = m_2+m_3}^{m_1+m_2+m_3} \sum_{p_2 = m_3} ^{m_2+m_3} C_k(p_1, p_2)$$
depends only on the congruences of $m_1, m_2$ and $m_3$ modulo $k$.
\end{rmk}
Following the  above discussion, it is straightforward to prove the following
 
\begin{lema}\label{M346}
For $m_1, m_2, m_3 \in \N$ and $k =3, 4, 6$,  let $i$ and $j$ be $(m_1 \mbox{ mod } k) + 1$ and $(m_2 \mbox{ mod } k)+1$ respectively. Then $H_{m_1, m_2, m_3}(\Phi_1 \Phi_k)$ is the $(i, j)$-entry of the matrix $M_k$ where 
$$M_6=\left( \begin{array}{rrrrrr}
1 &  2 &  2 &  1 &  0 & 0 \\
2 &  3 &  2 &  0 & -1 & 0\\
2 &  2 &  0 & -2 & -2 & 0  \\ 
1 &  0 & -2 & -3 & -2 & 0 \\ 
0 & -1 & -2 & -2 & -1 & 0  \\ 
0 &  0 &  0 &  0 &  0 & 0 \\ 
\end{array}  \right),
M_4 = \left( \begin{array}{rrrr}
1 &  1  &  0 & 0 \\
1 &  0  & -1 & 0\\
0 & -1  & -1 & 0  \\ 
0 &  0  &  0 & 0  \\ 
\end{array}  \right) \mbox{ and }
M_3 = \left( \begin{array}{rrr}
1 &  0 & 0  \\
0 & -1 & 0  \\
0 &  0 & 0  \\ 
\end{array}  \right).$$
\end{lema}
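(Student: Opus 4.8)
The plan is to reduce the statement to a short periodicity argument followed by a finite table check. The starting point is the formula from the previous lemma,
\[
H_{m_1,m_2,m_3}(\Phi_1\Phi_k)=\sum_{p_1=m_2+m_3}^{m_1+m_2+m_3}\ \sum_{p_2=m_3}^{m_2+m_3} C_k(p_1,p_2),
\]
together with the fact, established in the preceding lemmas, that $C_k(p_1,p_2)$ depends only on $p_1-p_2$ modulo $k$. First I would eliminate $m_3$ entirely: performing the shift $p_1\mapsto p_1-m_3$, $p_2\mapsto p_2-m_3$ leaves every summand $C_k(p_1,p_2)$ unchanged (it depends only on the difference $p_1-p_2$) while transforming the index ranges into $0\le p_2\le m_2$ and $m_2\le p_1\le m_1+m_2$. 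Hence $H_{m_1,m_2,m_3}(\Phi_1\Phi_k)=H_{m_1,m_2,0}(\Phi_1\Phi_k)$, and it suffices to treat the case $m_3=0$.

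Next I would record the one structural input used twice. Write $c_k(r)$ for the common value of $C_k(p_1,p_2)$ when $p_1-p_2\equiv r\pmod k$; the two previous lemmas give $c_k$ explicitly, and the Remark notes $\sum_{r=0}^{k-1}c_k(r)=0$ for $k=3,4,6$. The elementary fact I would isolate is this: a sum of $N$ consecutive terms of a period-$k$ sequence whose total over one full period vanishes depends only on the starting residue modulo $k$ and on $N\bmod k$ (adding a full period to $N$ contributes a vanishing block, and shifting the start by $k$ changes nothing). Applying it to the inner sum, for fixed $p_2$ the sum over the $m_1+1$ consecutive integers $p_1$ is such a consecutive sum of $c_k$ and so depends only on $(m_2-p_2)\bmod k$ and on $m_1\bmod k$. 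Viewed as a function of $p_2$, the resulting quantity is again periodic of period $k$ with vanishing total over a full period, so summing it over the $m_2+1$ values $p_2=0,\dots,m_2$ depends only on $m_2\bmod k$. This recovers the dependence asserted in the Remark: $H_{m_1,m_2,0}(\Phi_1\Phi_k)$ is a function of $m_1\bmod k$ and $m_2\bmod k$ alone.

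With both reductions in place, the lemma becomes a finite verification. For each pair of representatives $m_1\in\{0,\dots,k-1\}$ and $m_2\in\{0,\dots,k-1\}$ (and $m_3=0$) I would evaluate the double sum directly from the explicit values of $C_3,C_4,C_6$ and record the result as the $(i,j)$-entry with $i=m_1+1$, $j=m_2+1$, thereby filling in the arrays $M_3$, $M_4$, $M_6$. I expect the only genuine hazard to be the edge behaviour of the consecutive-sum argument, namely the dependence on the \emph{starting} residue and not merely on the length; once the elementary lemma above is stated cleanly this is handled uniformly, and the remaining tabulation of the $k^2$ entries is routine.
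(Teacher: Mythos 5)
Your proposal is correct and follows essentially the same route the paper intends: the paper states this lemma without written proof, deferring to the preceding Remark (periodicity of $C_k$ in $p_1-p_2$ plus the vanishing of $\sum_r C_k$ over a full period) followed by a finite tabulation, which is exactly what you carry out. Your explicit shift eliminating $m_3$ is a welcome sharpening of the paper's Remark, which only asserts dependence on $m_3$ modulo $k$ even though the lemma's matrices involve no $m_3$ at all.
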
 
 
 \begin{lema}\label{M2} For $m_1, m_2, m_3 \in \N$,  let $i$ and $j$ be $(m_1 \mbox{ mod } 2) + 1$ and $ (m_2 \mbox{ mod } 2)+1$ respectively. Then $H_{m_1, m_2, m_3}(\Phi_1 \Phi_2^2)$ is the $(i, j)$-entry of the matrix $M_2$, where
 $$ M_2 = \left( \begin{array}{ccc}
  1 + \frac{m_1+m_2}{2} & - \frac{m_2 + 1}{2}  \\
  &\\
- \frac{m_1 + 1}{2}    &  0  \\
\end{array}  \right)$$
\end{lema}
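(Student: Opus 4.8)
The plan is to specialize the explicit formula for $H_{m_1,m_2,m_3}(\Phi_1\Phi_2^2)$ from the preceding Lemma to the value $\xi_2 = e^{2\pi i/2} = -1$. The crucial simplification is that, because $\xi_2^{2q}=1$, every summand collapses to $\xi_2^{2q-(p_1+p_2)} = (-1)^{p_1+p_2}$, independently of $q$; this is in sharp contrast with the periodic behaviour exploited for $k=3,4,6$. Consequently the innermost sum is trivial, giving $C_2(p_1,p_2) = (p_1-p_2+1)(-1)^{p_1+p_2}$, so that
$$ H_{m_1,m_2,m_3}(\Phi_1\Phi_2^2) = \sum_{p_1=m_2+m_3}^{m_1+m_2+m_3}\ \sum_{p_2=m_3}^{m_2+m_3} (p_1-p_2+1)(-1)^{p_1+p_2}. $$

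Next I would substitute $a = p_1 - (m_2+m_3)$ and $b = p_2 - m_3$, so that $a$ ranges over $0,\dots,m_1$ and $b$ over $0,\dots,m_2$. This both removes the dependence on $m_3$ (as predicted by the preceding Remark) and factorises the summand, since $p_1-p_2+1 = (a-b)+m_2+1$ and $(-1)^{p_1+p_2} = (-1)^{m_2}(-1)^{a+b}$. Writing $S_0=\sum_{a=0}^{m_1}(-1)^a$, $S_1=\sum_{a=0}^{m_1} a(-1)^a$ and $T_0,T_1$ for the analogous sums with $m_2$ in place of $m_1$, the double sum separates as
$$ H_{m_1,m_2,m_3}(\Phi_1\Phi_2^2) = (-1)^{m_2}\bigl[\,S_1 T_0 - S_0 T_1 + (m_2+1)\,S_0 T_0\,\bigr]. $$

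Finally I would evaluate these elementary alternating sums: $S_0$ equals $1$ when $m_1$ is even and $0$ when $m_1$ is odd, while $S_1$ equals $m_1/2$ when $m_1$ is even and $-(m_1+1)/2$ when $m_1$ is odd (and likewise for $T_0,T_1$ with $m_2$). Inserting the four combinations of parities and simplifying yields $1+\tfrac{m_1+m_2}{2}$, $-\tfrac{m_2+1}{2}$, $-\tfrac{m_1+1}{2}$ and $0$ for $(m_1,m_2)$ even-even, even-odd, odd-even and odd-odd respectively, which are exactly the entries $M_2[1,1]$, $M_2[1,2]$, $M_2[2,1]$, $M_2[2,2]$ prescribed by the indexing $i=(m_1\bmod 2)+1$, $j=(m_2\bmod 2)+1$. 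The computation is entirely elementary; the only point requiring care is tracking the global sign $(-1)^{m_2}$ and correctly matching each parity case to its matrix index, which is also what explains why $M_2$ (unlike $M_3,M_4,M_6$) carries entries growing linearly in $m_1,m_2$ rather than being periodic.
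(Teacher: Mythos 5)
Your proposal is correct and follows essentially the same route as the paper: both specialize the general formula to $\xi_2=-1$ to get $C_2(p_1,p_2)=(-1)^{p_1+p_2}(p_1-p_2+1)$ and then evaluate the resulting alternating double sum by parity of $m_1$ and $m_2$. The only difference is organizational — you reindex to strip out $m_3$ and separate the sum into products of the elementary sums $S_0,S_1,T_0,T_1$, whereas the paper sums over $p_2$ for fixed $p_1$ and then over $p_1$ case by case; all four of your parity evaluations agree with the entries of $M_2$.
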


\begin{proof}
We have
\begin{equation*}\label{Formula1}
C_2(p_1, p_2) = \left\{\begin{array}{cccc}  
& p_1 - p_2 + 1\,,  &  p_1 \equiv p_2 (mod \, 2)  \\ 
&\\
& -(p_1 - p_2 + 1) \,, & otherwise\\
\end{array}\qquad\,, \right .  
 \end{equation*}

and $$H_{m_1, m_2, m_3}(\Phi_1 \Phi_2^2) = \sum_{p_1 = m_2+m_3}^{m_1+m_2+m_3} \sum_{p_2 = m_3} ^{m_2+m_3} (-1)^{p_1 + p_2}(p_1 - p_2 + 1).$$

We now make a case by case study with respect to the parity of $m_1$ and $m_2$. If $m_2$ is even then for a fixed $p_1$, 
$$\sum_{p_2 = m_3} ^{m_2+m_3} (-1)^{p_1 + p_2}(p_1 - p_2 + 1) = (-1)^{p_1 + m_3}\bigg(p_1 + 1 - \frac{m_2}{2} -m_3\bigg).$$
Moreover, If $m_1$ is even then
$$H_{m_1, m_2, m_3}(\Phi_1 \Phi_2^2) = \sum_{p_1 = m_2+m_3}^{m_1+m_2+m_3} (-1)^{p_1 + m_3}\bigg(p_1 + 1 - \frac{m_2}{2} - m_3\bigg) = 1 + \frac{m_1+m_2}{2}.$$
On the other hand, if $m_1$ is odd then
$$H_{m_1, m_2, m_3}(\Phi_1 \Phi_2^2) = \sum_{p_1 = m_2+m_3}^{m_1+m_2+m_3} (-1)^{p_1 + m_3}\bigg(p_1 + 1 - \frac{m_2}{2}\bigg) = - \frac{m_1 + 1}{2}.$$
Now, if $m_2$ is odd then for a fixed $p_1$, 
$$\sum_{p_2 = m_3} ^{m_2+m_3} (-1)^{p_1 + p_2}(p_1 - p_2 + 1) = (-1)^{p_1 + m_3}\bigg(\frac{m_2 + 1}{2}\bigg)$$
and this depends only on the parity of $p_1$. Hence, 
\begin{equation*}
H_{m_1, m_2, m_3}(\Phi_1 \Phi_2^2) = \left\{\begin{array}{cccc}  
& - \frac{m_2 + 1}{2} \,,  &  p_1 \equiv 0 (mod \, 2)  \\ 
&\\
& 0 \,, & otherwise\\
\end{array}\qquad\,. \right .  
 \end{equation*}
\end{proof}

\subsection{Euler Characteristic of $\mr{SL}_3(\Z)$ with respect to the highest weight representations}\label{sl3euler}
We compute the $ \chi_{h}(\mr{SL}_3(\Z), \m_{m_1, m_2} ) $ in the following table by  computing each factor of the above equation~\eqr{sl3chi} up to modulo 12, which is achieved simply by following the discussion of previous Subsection~\ref{torsionsl3} and more explicitly from Lemma~\ref{M346} and Lemma~\ref{M2}. All these values are encoded in the following table consisting of $144$ entries where rows run from $0 \leq i \leq 11$ representing $m_1 \equiv i (mod \, 12)$ and columns runs through  $ 0 \leq j \leq 11$ representing $m_2 \equiv j (mod \, 12)$. To accommodate the data with the available space, the table has been divided into two different tables of order $12 \times 6$ each. In the first table (Table 8) $j$ runs from $0 (mod \, 12)$ to $5 (mod \, 12)$ and in the second table (Table 9) from $ 6(mod\, 12)$ to  $11 (mod\, 12)$ and in both tables $i$ runs from $ 0 (mod\, 12)$ to $ 11 (mod\, 12)$.

Once the entries of the table are computed, we get complete information about the Euler characteristics of $\mr{SL}_3(\Z)$ which is summarized in the following
\begin{thm}
\label{euler}
The Euler characteristics of $\mr{SL}_3(\Z)$ with coefficient in any highest weight representation $\m_{m_1,m_2}$, can be described by one of the following four cases, depending on the parity of $m_1$ and $m_2$. More precisely,
\begin{equation}
\chi_{h}(\mr{SL}_3(\Z), \m_{m_1, m_2} ) = \left\{\begin{array}{cccc}  
& -1-\dim S_{m_1+2}-\dim S_{m_2+2} \,,  &  m_1, m_2 \,\, \mr{both}\, \mr{even}  \\ 
&\\
& -\dim S_{m_1+2}+\dim S_{m_1+m_2+3} \,, & m_1 \, \mr{even},\, m_2 \, \mr{odd} \\
&\\
& -\dim S_{m_2+2}+\dim S_{m_1+m_2+3} \,, & m_1 \, \mr{odd}, \, m_2 \, \mr{even}\\
&\\
& 0 \,, &  m_1, m_2 \, \,\mr{both\,\, odd}\\
\end{array}\quad\,, \right .  
 \end{equation}

where $S_{m+2}$, as described earlier in Section 5.1 by equation~\eqr{blah}, is the space of holomorphic cusp forms of weight $m+2$ for $\rm{SL}_2(\Z)$, and for $m=0$ we define $\dim S_2=-1$. 
\end{thm}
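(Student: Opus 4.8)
The plan is to start from formula~\eqr{sl3chi}, which already expresses $\chi_{h}(\mr{SL}_3(\Z),\m_{m_1,m_2})$ as the weighted sum of the four traces $H_{m_1,m_2}(\Phi_1\Phi_2^2)$, $H_{m_1,m_2}(\Phi_1\Phi_3)$, $H_{m_1,m_2}(\Phi_1\Phi_4)$ and $H_{m_1,m_2}(\Phi_1\Phi_6)$, with coefficients $-\tfrac16,\tfrac12,\tfrac12,\tfrac16$ read off from the column $Res(f)\,\chi(C(T))$ of Table~\ref{torsion-sl3} (the class $\Phi_1^3$ contributing nothing because $\chi(\mr{SL}_3(\Z))=0$). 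Everything then reduces to substituting the closed forms for these traces established in the preceding lemmas and collecting terms.

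First I would insert the values of $H_{m_1,m_2}(\Phi_1\Phi_k)$ for $k=3,4,6$ supplied by Lemma~\ref{M346}: each depends only on the residues of $m_1$ and $m_2$ modulo $k$ and is the corresponding entry of the matrix $M_k$. Since $3,4,6$ all divide $12$, these three contributions together are periodic of period $12$ in each of $m_1,m_2$. The remaining term $-\tfrac16 H_{m_1,m_2}(\Phi_1\Phi_2^2)$ is the only one that is unbounded: by Lemma~\ref{M2} (with $m_3=0$) it equals $1+\tfrac{m_1+m_2}{2}$, $-\tfrac{m_2+1}{2}$, $-\tfrac{m_1+1}{2}$ or $0$ according to the parities of $m_1,m_2$, so it carries all the linear growth of the Euler characteristic. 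Assembling the periodic corrections with this linear term produces exactly the $12\times12$ array referred to above (Tables~8 and~9), and I would organize the outcome along the four parity classes of $(m_1,m_2)$ that appear in the statement.

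The final step is to recognize each of the four resulting expressions as the claimed combination of cusp form dimensions. Here I would use the function $\dim S_{m+2}$ (holomorphic cusp forms of weight $m+2$ for $\mr{SL}_2(\Z)$), a piecewise linear function of period $12$ normalized by $\dim S_2=-1$, together with the already computed data of Table~\ref{eulersl2gl2}, where $-\dim S_{m+2}=\chi_{h}(\mr{GL}_2(\Z),V_m)$ for even $m$. Matching is then a direct comparison: when $m_1,m_2$ are both even one checks that $-\tfrac16\big(1+\tfrac{m_1+m_2}{2}\big)$ together with the $M_3,M_4,M_6$ corrections equals $-1-\dim S_{m_1+2}-\dim S_{m_2+2}$, while in the case $m_1$ even, $m_2$ odd the linear term $\tfrac16\cdot\tfrac{m_2+1}{2}$ combined with the corrections reproduces $-\dim S_{m_1+2}+\dim S_{m_1+m_2+3}$, and symmetrically in the remaining two cases.

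I expect the main obstacle to be bookkeeping rather than conceptual. The term $H_{m_1,m_2}(\Phi_1\Phi_2^2)$ depends on $m_1+m_2$ jointly, whereas the answer in the both-even case is a sum of two \emph{separate} functions $\dim S_{m_1+2}$ and $\dim S_{m_2+2}$; thus the periodic corrections from $M_3,M_4,M_6$ must be shown to split the variables while simultaneously supplying the correct constant offset ($-1$ when both are even, $0$ when both are odd). Verifying that these offsets and the variable separation come out exactly right, uniformly across all residues modulo $12$, is the one place where care is genuinely required; the linear parts match immediately, since both sides grow like $-\tfrac{1}{12}(m_1+m_2)$.
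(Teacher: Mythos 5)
Your proposal follows essentially the same route as the paper: start from equation~\eqr{sl3chi}, substitute the closed forms from Lemma~\ref{M346} and Lemma~\ref{M2}, tabulate the result modulo $12$ in each variable (this is exactly what Tables~8 and~9 record), and then match the four parity classes against the standard dimension formula for $S_{m+2}$ with the convention $\dim S_2=-1$. The only blemish is your closing remark that both sides grow like $-\tfrac{1}{12}(m_1+m_2)$ in all cases --- that is true only when $m_1,m_2$ are both even; in the mixed-parity cases the growth is $\tfrac{1}{12}(m_2+1)$ resp.\ $\tfrac{1}{12}(m_1+1)$, as you in fact computed correctly earlier in the argument.
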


For the reader's convenience, the dimension of the space of cusp forms $S_{m+2}$ is given by 
$$\dim S_{12\ell+2+i}=
\left\{
\begin{tabular}{lll}
$\ell-1$ & if $i=0$\\
$\ell$ & if $i=2,4,6,8$\\
$\ell+1$ &if $i=10$\\
$0$ & if $i$ is odd
\end{tabular}.
\right.$$

{
\afterpage{%
    \clearpage
\small{
\begin{landscape}
 \centering 
\vfill
\begin{table}[!htbp]
\begin{threeparttable}
\begin{tabular}{l*{6}{c}}
\vspace{0.8cm}
$-\frac{1}{12}(m_1+m_2)+1$ &$ \frac{1}{12} (m_2-1)+1$  &$ -\frac{1}{12}(m_1+m_2 -2) $ & $\frac{1}{12} (m_2-3)+1 $ & $-\frac{1}{12} (m_1+m_2-4) $ & $\frac{1}{12} (m_2-5)+1$ \\

\vspace{0.8cm}

$\frac{1}{12}(m_1-1) +1$ & 0 & $\frac{1}{12}(m_1-1) $&  0 &$ \frac{1}{12}(m_1-1)$  & 0  \\

\vspace{0.8cm}

$-\frac{1}{12} (m_1+m_2-2)$ & $\frac{1}{12}(m_2-1) $ & $-\frac{1}{12} (m_1+m_2-4)-1$ & $\frac{1}{12}(m_2-3) $ & $-\frac{1}{12} (m_1+m_2-6)-1$ & $\frac{1}{12}(m_2-5)$ \\

\vspace{0.8cm}


$\frac{1}{12}(m_1-3) +1$ & 0 & $\frac{1}{12}(m_1-3)$ & 0  & $\frac{1}{12}(m_1-3)$ & 0  \\

\vspace{0.8cm}


$-\frac{1}{12} (m_1+m_2-4)$ & $ \frac{1}{12}(m_2-1)$  & $-\frac{1}{12} (m_1+m_2-6)-1$  & $\frac{1}{12}(m_2-3) $  & $-\frac{1}{12} (m_1+m_2-8)-1$ & $\frac{1}{12}(m_2-5) +1$ \\

\vspace{0.8cm}


$\frac{1}{12}(m_1-5) +1$ & 0 & $\frac{1}{12}(m_1-5)$ & 0  & $\frac{1}{12}(m_1-5)+1$ & 0  \\

\vspace{0.8cm}


$-\frac{1}{12}(m_1+m_2-6)$ & $\frac{1}{12} (m_2-1)$  & $-\frac{1}{12}(m_1+m_2 -8)-1$  & $\frac{1}{12} (m_2-3)+1$  & $-\frac{1}{12} (m_1+m_2-10)-1 $ & $\frac{1}{12} (m_2-5)$  \\

\vspace{0.8cm}

$\frac{1}{12}(m_1-7) +1$ & 0 & $\frac{1}{12}(m_1-7)+1$ &  0 & $\frac{1}{12}(m_1-7)$  & 0  \\

\vspace{0.8cm}

$-\frac{1}{12} (m_1+m_2-8)$ & $\frac{1}{12}(m_2-1) +1$ & $-\frac{1}{12} (m_1+m_2-10)-1$ & $\frac{1}{12}(m_2-3) $ & $-\frac{1}{12} (m_1+m_2-12)-1$ & $ \frac{1}{12}(m_2-5)+1$ \\

\vspace{0.8cm}


$\frac{1}{12}(m_1-9) +2$ & 0 & $\frac{1}{12}(m_1-9)$ & 0  & $\frac{1}{12}(m_1-9)+1$ & 0 \\

\vspace{0.8cm}

$-\frac{1}{12} (m_1+m_2-10)-1$ & $\frac{1}{12}(m_2-1)-1$  & $-\frac{1}{12} (m_1+m_2-12)-2$  & $\frac{1}{12}(m_2-3)$   & $-\frac{1}{12} (m_1+m_2-14)-2$ & $\frac{1}{12}(m_2-5)$  \\

\vspace{0.8cm}

$\frac{1}{12}(m_1-11) +1$ & 0 &$ \frac{1}{12}(m_1-11)+1$ & 0  & $\frac{1}{12}(m_1-11)+1$ & 0 \\

\end{tabular}
\vspace{0.4cm}
\caption{First table comprising 72 values of $\chi_{h}(\mr{SL}_3(\Z), \m_{m_1, m_2} )$ with rows $0 \leq m_1 \leq 11$ and columns $0 \leq m_2 \leq 5$ both $(mod\, 12)$.}
\end{threeparttable}
\end{table}
\vfill
\end{landscape}
 \clearpage
}
}
}

\afterpage{%
\clearpage
\small{
\begin{landscape}
\vfill
\begin{table}[!htbp]
\begin{threeparttable}
\begin{tabular}{l*{12}{c}}
\vspace{0.8cm}
 $ -\frac{1}{12}(m_1+m_2-6)$ & $\frac{1}{12} (m_2-7)+1$  & $-\frac{1}{12}(m_1+m_2 -8) $ & $\frac{1}{12} (m_2-9)+2 $ & $-\frac{1}{12} (m_1+m_2-10)-1$ & $\frac{1}{12} (m_2-11)+1$ \\

\vspace{0.8cm}

 $\frac{1}{12}(m_1-1) $ & 0 & $\frac{1}{12}(m_1-1)+1 $ &  0 & $ \frac{1}{12}(m_1-1)-1$  & 0 \\

\vspace{0.8cm}

 $-\frac{1}{12} (m_1+m_2-8) -1$ & $\frac{1}{12}(m_2-7)+1$  & $-\frac{1}{12} (m_1+m_2-10)-1$ & $\frac{1}{12}(m_2-9)$  & $-\frac{1}{12} (m_1+m_2-12)-2$ & $\frac{1}{12}(m_2-11)+1$\\

\vspace{0.8cm}


 $\frac{1}{12}(m_1-3) +1$ & 0 & $\frac{1}{12}(m_1-3)$ & 0  & $\frac{1}{12}(m_1-3)$ & 0 \\

\vspace{0.8cm}


 $-\frac{1}{12} (m_1+m_2-10)-1$ & $\frac{1}{12}(m_2-7)$  & $-\frac{1}{12} (m_1+m_2-12)-1$  & $\frac{1}{12}(m_2-9)+1$  & $-\frac{1}{12} (m_1+m_2-14)-2$ & $\frac{1}{12}(m_2-11) +1$\\

\vspace{0.8cm}


 $\frac{1}{12}(m_1-5)$  & 0 & $\frac{1}{12}(m_1-5)+1$ & 0  & $ \frac{1}{12}(m_1-5) $ & 0 \\

\vspace{0.8cm}


 $-\frac{1}{12}(m_1+m_2-12)-1$ & $ \frac{1}{12} (m_2-7)+1$  & $-\frac{1}{12}(m_1+m_2 -14)-1$ & $\frac{1}{12} (m_2-9)+1 $ & $-\frac{1}{12} (m_1+m_2-16)-2$ & $\frac{1}{12} (m_2-11)+1$  \\

\vspace{0.8cm}

 $\frac{1}{12}(m_1-7) +1$ & 0 & $\frac{1}{12}(m_1-7)+1$ &  0 & $\frac{1}{12}(m_1-7) $ & 0\\

\vspace{0.8cm}

 $-\frac{1}{12} (m_1+m_2-14)-1$ & $\frac{1}{12}(m_2-7) +1$ & $-\frac{1}{12} (m_1+m_2-16)-1$ & $\frac{1}{12}(m_2-9)+1$  & $-\frac{1}{12} (m_1+m_2-18)-2$ & $\frac{1}{12}(m_2-11)+1$\\

\vspace{0.8cm}


$\frac{1}{12}(m_1-9) +1$ & 0 & $\frac{1}{12}(m_1-9)+1$ & 0  & $\frac{1}{12}(m_1-9)$ & 0\\

\vspace{0.8cm}

 $-\frac{1}{12} (m_1+m_2-16)-2$ & $\frac{1}{12}(m_2-7)$  & $-\frac{1}{12} (m_1+m_2-18)-2$  & $\frac{1}{12}(m_2-9)$   & $-\frac{1}{12} (m_1+m_2-20)-3$ & $\frac{1}{12}(m_2-11)+1$\\

\vspace{0.8cm}

 $\frac{1}{12}(m_1-11) +1$ & 0 & $\frac{1}{12}(m_1-11)+1$ & 0  &$ \frac{1}{12}(m_1-11)+1 $& 0\\

\end{tabular}
\vspace{0.4cm}
\caption{Second half of the total 144 entries, comprising 72 values of $\chi_{h}(\mr{SL}_3(\Z), \m_{m_1, m_2})$ with rows $0 \leq m_1 \leq 11$ and columns $6 \leq m_2 \leq 11$ both $(mod \,12)$.}
\end{threeparttable}
\end{table}
\vfill
\end{landscape}
 \clearpage
}
}

\subsection{Euler Characteristic of $\mr{GL}_3(\Z)$ with respect to the highest weight representations}  This subsection is merely an example to reveal the fact that the results obtained for $\mr{SL}_3(\Z)$ can easily be extended to $\mr{GL}_3(\Z)$. However, This can also be easily concluded by using the Lemma~\ref{sl3togl3} which appears later in Section 6. 

Let $T$ be any torsion element of $\mr{GL}_3(\Z)$. Then  $H_{m}(-T) = (-1)^{m} H_m(T)\,.$ Therefore 
\begin{equation*}H_m (T) + H_m (-T) = \left\{\begin{array}{cccc}  
& 2 H_m(T)\,,  &  m \equiv 0 (mod \, 2)  \\ 
&\\
& 0 \,, & m \equiv 1 (mod \, 2)\\
\end{array}\qquad\,. \right .  
 \end{equation*}
 
For any $T \in \mr{SL}_3(\Z)$, $C_{\mr{GL}_{3}(\Z)}(T) = \{\pm I\} \times C_{\mr{SL}_{3}(\Z)}(T)$. This implies that $$\chi_{orb}(C_{\mr{GL}_{3}(\Z)}(T)) = \frac{1}{2} \chi_{orb}(C_{\mr{SL}_{3}(\Z)}(T))\,.$$
This gives 
\begin{equation*}
\chi_{orb}(C_{\mr{GL}_{3}(\Z)}(T)) H_m (T) + \chi_{orb}(C_{\mr{GL}_{3}(\Z)}(-T))  H_m (-T) =  \left\{\begin{array}{cccc}  
& \chi_{orb}(C_{\mr{SL}_{3}(\Z)}(T)) H_m(T)\,,  &  m \equiv 0 (mod \, 2)  \\ 
& \\
& 0 \,, & m \equiv 1 (mod \, 2)\\
\end{array}. \right . 
\end{equation*}

Therefore, 

\begin{equation*}
\chi_{h}(\mr{GL}_{3}(\Z), Sym^{m} V) =  \left\{\begin{array}{cccc}  
& \chi_{h}(\mr{SL}_{3}(\Z), Sym^{m} V)\,,  &  m \equiv 0 (mod \, 2)  \\ 
& \\
& 0 \,, & m \equiv 1 (mod \, 2)\\
\end{array}. \right . 
\end{equation*}

More generally, following the Weyl character formula, for any torsion element $T \in \mr{GL}_3(\Z)$,  we write
$$H_{m_1, m_2, m_3}(-T) = (-1)^{m_1+2m_2+3m_3} H_{m_1, m_2, m_3}(T) = (-1)^{m_1 + m_3} H_{m_1, m_2, m_3}(T) \,.$$ This implies that
\begin{equation*}
\chi_{h}(\mr{GL}_{3}(\Z), \m_{m_1, m_2, m_3}) =  \left\{\begin{array}{cccc}  
& \chi_{h}(\mr{SL}_{3}(\Z), \m_{m_1, m_2})\,,  &  m_1 + m_3 \equiv 0 (mod \, 2)  \\ 
& \\
& 0 \,, & m_1 + m_3 \equiv 1 (mod \, 2)\\
\end{array}. \right . 
\end{equation*}




\section{Eisenstein Cohomology} 
In this section, by using the information obtained about boundary cohomology and Euler characteristic of $\mr{SL}_{3}(\Z)$, we discuss the Eisenstein cohomology with coefficients in $\m_\lambda$. We define the Eisenstein cohomology as the image of the restriction morphism to the boundary cohomology
\begin{equation}\label{eq:eis}
r : H^{\bullet}(\rS, \tm_\lambda)   \longrightarrow H^{\bullet}({\partial \overline{\rS}}, \tm_\lambda).
\end{equation}
In general, one can find the definition of Eisenstein cohomology as a certain subspace of $H^{\bullet}(\rS, \tm_\lambda)$ that is a complement of a subspace of the interior cohomology. It is known that the interior cohomology $H_!^\bullet(\rS, \tm_\lambda)$ is the kernel of the restriction morphism $r$. More precisely, we can simply consider the following happy scenario where the following sequence is exact.
\begin{equation*}
 0 \longrightarrow H_{!}^\bullet(\rS, \tm_\lambda) \longrightarrow  H^\bullet(\rS, \tm_\lambda) \xrightarrow{r} H^\bullet_{Eis}(\rS, \tm_\lambda )\longrightarrow 0 \,.
 \end{equation*}
 
To manifest the importance of the ongoing work and the complications involved, we refer the interested reader to an important article~\cite{LeeSch82} of Lee and Schwermer.

\subsection{A summary of boundary cohomology}

For further exploration, we summarize the discussion of boundary cohomology of $\mr{SL}_3(\Z)$ carried out  in Section~\ref{bdsl3} in the form of  following theorem.

\begin{thm}\label{bdcohsl3} For $\lambda = m_1\varepsilon_1 + m_2(\varepsilon_1+\varepsilon_2)$, the boundary cohomology of the orbifold $\mr{S}$ of the arithmetic group $\mr{SL}_3(\Z)$ with coefficients in the highest  weight representation $\m_\lambda$ is described as follows.
\begin{enumerate} 
\item Case 1 : $m_1 = m_2 = 0$  then 
\begin{equation*}
H^q(\partial \mr{S}, \tm_\lambda) =\left\{\begin{array}{cccc}  
&\mathbb{Q} \quad \mr{for}\quad  q=0,4 \\ 
&\\
&0  \qquad \mr{otherwise}\\
\end{array}\qquad\,. \right .
\end{equation*}

\item  Case 2 :  $m_1=0$ and $m_2 \neq 0$, $m_2 $ even

\begin{eqnarray*}
H^q(\partial \rS, \tm_\lambda) 
& = &
\left\{\begin{array}{cccc}  
& H_{!}^{1}(\rS^{M_1}, \tm_{e\cdot\lambda}) \,, & q=1 \\ 
&\\
& H_{!}^{1}(\rS^{M_2}, \tm_{s_2s_1\cdot\lambda}) \,, & q=3 \\
&\\
&0  \,, & \mr{otherwise}\\
\end{array}\,, \right .\\
 & = &
\left\{\begin{array}{cccc}  
& S_{m_{2} + 2} \,, & q=1 \\ 
&\\
& S_{m_2+2}\,, & q=3 \\
&\\
&0  \,, & \mr{otherwise}\\
\end{array}\,. \right .
\end{eqnarray*}

\item  Case 3 :  $m_1 \neq 0$, $m_1$ even and $m_2 =0$

\begin{eqnarray*}
H^q(\partial \rS, \tm_\lambda) 
& = & 
\left\{\begin{array}{cccc}  
& H_{!}^{1}(\rS^{M_2}, \tm_{e\cdot\lambda}) \,, & q=1 \\ 
&\\
& H_{!}^{1}(\rS^{M_1}, \tm_{s_1s_2\cdot\lambda}) \,, & q=3 \\
&\\
&0  \,, & \mr{otherwise}\\
\end{array}\,, \right. \\ 
& = &  
\left\{\begin{array}{cccc}  
& S_{m_1+2} \,, & q=1 \\ 
&\\
& S_{m_1+2} \,, & q=3 \\
&\\
&0  \,, & \mr{otherwise}\\
\end{array}\,. \right .
\end{eqnarray*}

\item  Case 4 :  $m_1 \neq 0$, $m_1$ even and $m_2 \neq 0$, $m_2$ even, then

\begin{eqnarray*}
H^q(\partial \rS, \tm_\lambda) 
& = &
\left\{\begin{array}{cccc}  
& \Q \oplus H_{!}^{1}(\rS^{M_1}, \tm_{e\cdot\lambda})\oplus H_{!}^{1}(\rS^{M_2}, \tm_{e\cdot\lambda}) \,, & q=1 \\ 
&\\
&  H_{!}^1(\rS^{M_1}, \tm_{s_1s_2\cdot\lambda}) \oplus H_{!}^1(\rS^{M_2}, \tm_{s_2s_1\cdot\lambda}) \oplus \Q \,, & q=3 \\
&\\
&0  \,, & \mr{otherwise}\\
\end{array}\,, \right . \\
& = &
\left\{\begin{array}{cccc}  
& \Q \oplus S_{m_1+2}\oplus S_{m_2+2} \,, & q=1 \\ 
&\\
& \Q \oplus S_{m_1+2}\oplus S_{m_2+2} \,, & q=3 \\
&\\
&0  \,, & \mr{otherwise}\\
\end{array}\,. \right .
\end{eqnarray*}

\item   Case 5 :  $m_1 \neq 0$, $m_1$ even and $m_2 $ odd, then

\begin{eqnarray*}
H^q(\partial \rS, \tm_\lambda) & = & 
\left\{\begin{array}{cccc}  
&H^{1}_{!}(\rS^{M_2}, \tm_{e\cdot\lambda})\,,  &  q=1 \\ 
&\\
& H^{1}_{!}(\rS^{M_1}, \tm_{s_1 \cdot \lambda}) \oplus H^{1}_{!}(\rS^{M_2}, \tm_{s_2 \cdot \lambda}) \,, & q=2\\
&\\
& H^{1}_{!}(\rS^{M_1}, \tm_{s_1s_2 \cdot \lambda})\,, & q=3\\
&\\
& 0 \,, & \mr{otherwise}
\end{array} \,, \right . \\ 
& = & 
\left\{\begin{array}{cccc}  
& S_{m_1+2}\,,  &  q=1 \\ 
&\\
& S_{m_1+m_2+3} \oplus S_{m_1+m_2+3}\,, & q=2\\
&\\
& S_{m_1+2}\,, & q=3\\
&\\
& 0 \,, & \mr{otherwise}
\end{array} \,. \right .
 \end{eqnarray*}

\item  Case 6 :  $m_1 = 0$ and $m_2$ odd, then

\begin{eqnarray*}
H^q(\partial \rS, \tm_\lambda) & = &  
\left\{\begin{array}{cccc}
& H_{!}^1(\rS^{M_1}, \tm_{s_1\cdot\lambda}) \oplus H_{!}^1(\rS^{M_2}, \tm_{s_2\cdot\lambda}) \oplus \Q \oplus \Q \,, & q=2 \\
&\\
& 0 \,,& \mr{otherwise}
\end{array}\qquad\,, \right .\\
& = & 
\left\{\begin{array}{cccc}
& S_{m_1+m_2+3} \oplus S_{m_1+m_2+3} \oplus \Q \oplus \Q \,, & q=2 \\
&\\
& 0 \,,& \mr{otherwise}
\end{array}\qquad\,. \right .\\
\end{eqnarray*}

\item  Case 7 :  $m_1 $ odd and $m_2 =0$
\begin{eqnarray*}
H^q(\partial \rS, \tm_\lambda)  & = &  \left\{\begin{array}{cccc}
& H_{!}^1(\rS^{M_2}, \tm_{s_2\cdot\lambda}) \oplus H_{!}^1(\rS^{M_1}, \tm_{s_1\cdot\lambda}) \oplus \Q \oplus \Q \,, & q=2 \\
& \\
& 0 \,,& \mr{otherwise}
\end{array}\qquad\,, \right .\\
& = & \left\{\begin{array}{cccc}
& S_{m_1+m_2+3} \oplus S_{m_1+m_2+3}\oplus \Q \oplus \Q \,, & q=2 \\
& \\
& 0 \,,& \mr{otherwise}
\end{array}\qquad\,. \right .
\end{eqnarray*}

\item  Case 8 :  $m_1$ odd, $m_2 \neq 0$ even, then
\begin{eqnarray*}\label{eq:e520}
H^q(\partial \rS, \tm_\lambda) & = & \left\{\begin{array}{cccc}  
&H^{1}_{!}(\rS^{M_1}, \tm_{e\cdot\lambda})\,,  &  q=1 \\ 
&\\
& H^{1}_{!}(\rS^{M_1}, \tm_{s_1 \cdot \lambda}) \oplus H^{1}_{!}(\rS^{M_2}, \tm_{s_2 \cdot \lambda}) \,, & q=2\\
&\\
& H^{1}_{!}(\rS^{M_2}, \tm_{s_2s_1 \cdot \lambda})\,, & q=3\\
&\\
& 0 \,, & \mr{otherwise}
\end{array}\qquad\,, \right .\\
& = & \left\{\begin{array}{cccc}  
&S_{m_2+2}\,,  &  q=1 \\ 
&\\
& S_{m_1+m_2+3} \oplus S_{m_1+m_2+3} \,, & q=2\\
&\\
& S_{m_2+2}\,, & q=3\\
&\\
& 0 \,, & \mr{otherwise}
\end{array}\qquad\,. \right .\\
\end{eqnarray*}

\item  Case 9 :  $m_1$ odd and $m_2$ odd, then
\begin{equation*}
H^{q}(\partial \rS, \widetilde{\mathcal{M}}_\lambda) = 0 \,, \qquad \forall q \,.
\end{equation*}
\end{enumerate}
\end{thm}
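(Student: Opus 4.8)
The plan is to treat this theorem as a consolidation of the case-by-case spectral sequence computation already carried out in Section~\ref{bdsl3}, augmented by a single additional ingredient: the translation of the interior cohomology groups of the Levi quotients into spaces of cusp forms. Consequently the \emph{first} displayed equality in each of the nine cases needs no fresh argument. For each parity type of $(m_1,m_2)$ I would simply point to the corresponding subsection of Section~\ref{bdsl3}, where the sets $\overline{\W}^i(\lambda)$ were read off from Lemma~\ref{parity0} and Lemma~\ref{parity1}, the terms $E_1^{p,q}$ were assembled via~\eqref{eq:e12}, the differentials $d_1^{0,q}$ were analyzed, and the degeneration~\eqref{eq:hks} together with the short exact sequence~\eqref{eq:ses} produced $H^q(\partial\rS,\tm_\lambda)$.

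The \emph{second} displayed equality is where the actual work sits. First I would recall that for $i\in\{1,2\}$ the Levi quotient $\mr{M}_i\cong\mr{GL}_2$ and $\rS^{\mr{M}_i}\cong\widetilde{\rS}^{\mr{GL}_2}$, so that each group $H^1_!(\rS^{\mr{M}_i},\tm_{w\cdot\lambda})$ is the interior cohomology of $\mr{GL}_2(\Z)$ with coefficients in the irreducible module $\mathcal{V}_{a,n}$ whose highest weight $a\gamma^{\mr{M}_i}+n\kappa^{\mr{M}_i}$ equals $w\cdot\lambda$. The parameter $a$ is then extracted directly from the lists of Kostant representatives for $\rP_1$ and $\rP_2$ given in Section~\ref{basic}: on $\mr{M}_1$ one has $a=m_2$ for $e\cdot\lambda$, $a=m_1+m_2+1$ for $s_1\cdot\lambda$, and $a=m_1$ for $s_1s_2\cdot\lambda$, with the mirror values $m_1$, $m_1+m_2+1$, $m_2$ on $\mr{M}_2$ for $e\cdot\lambda$, $s_2\cdot\lambda$, $s_2s_1\cdot\lambda$. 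Since the interior cohomology on $\widetilde{\rS}^{\mr{GL}_2}$ depends only on $a$ and not on the determinant twist $n$, I would invoke the chain of identifications in~\eqref{eq:blah} to conclude $H^1_!(\rS^{\mr{M}_i},\tm_{w\cdot\lambda})\cong S_{a+2}$; substituting the values of $a$ yields exactly the spaces $S_{m_2+2}$, $S_{m_1+m_2+3}$, $S_{m_1+2}$ appearing in the statement. The one-dimensional summands $\Q$ occurring in Cases~4, 6 and 7 are then accounted for separately: they are not cusp form contributions but the surviving images of the one-dimensional coefficient systems on the boundary faces, arising as the relevant kernels or cokernels of the $d_1^{0,q}$ (and, in Case~6, as the glued space $W$ of Subsection~\ref{case6}), and these were already pinned down in the respective subsections.

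The step I expect to be the main obstacle is the rigorous identification $H^1_!(\rS^{\mr{M}_i},\tm_{w\cdot\lambda})\cong S_{a+2}$ carried out \emph{without} transcendental input, in keeping with the stated philosophy of the paper. The clean route is to avoid the Eichler--Shimura isomorphism and argue by a dimension count instead: for $a>0$ one has $H^0(\mr{GL}_2(\Z),\mathcal{V}_{a,n})=H^2(\mr{GL}_2(\Z),\mathcal{V}_{a,n})=0$, so $\dim H^1_!=\dim H^1=-\chi_h(\mr{GL}_2(\Z),V_a)$, and the latter Euler characteristic was evaluated in Table~\ref{eulersl2gl2} to be $\dim S_{a+2}$. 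Here one must track the parity dichotomy of~\eqref{eq:CohomGL_2}, namely whether $H^1_!$ is realized as $H^1$ or as $H^1_c$, so as to be certain the interior part is the object being named, and the degenerate value $a=0$ (where $S_{a+2}$ is empty and the contribution is instead one of the $\Q$ summands) must be excluded from this count and handled on its own. These are bookkeeping points rather than genuine difficulties, and once they are dispatched the nine cases follow by direct substitution.
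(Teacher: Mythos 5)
Your proposal is correct and matches the paper's own treatment: Theorem~\ref{bdcohsl3} is explicitly presented as a summary of the case-by-case spectral sequence computations of Section~\ref{bdsl3}, with the second displayed equalities obtained exactly as you describe, by reading off the parameter $a$ from the Kostant representative lists for $\rP_1$ and $\rP_2$ and invoking the identification $H^1_!(\widetilde{\rS}^{\mr{GL}_2},\widetilde{\mathcal{V}}_{a,n})\cong S_{a+2}$ from~\eqref{eq:blah} (with the $\Q$ summands accounted for by the $a=0$ degeneracies, the kernels/cokernels of $d_1^{0,q}$, and the space $W$ in Case~6). Your suggested dimension-count route via $-\chi_h(\mr{GL}_2(\Z),V_a)=\dim S_{a+2}$ to avoid Eichler--Shimura is also the one the paper itself records in the remark following~\eqref{eq:blah}, so there is nothing genuinely different here.
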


\par 
Observe that at this point we have explicit formulas to determine the cohomology of the boundary.

\subsection{Poincar\'e Duality}

Let $\m_\lambda^\ast$ denote the dual representation of $\m_\lambda$. $\m_\lambda^\ast$ is in fact the irreducible representation $\m_{\lambda^\ast}$ associated to the highest weight $\lambda^\ast = -w_0(\lambda)$, where $w_0$ denotes the longest element in the Weyl group. One has the natural pairings (see~\cite{HR2015}) $$H^\bullet(\rS_\Gamma, \tm_\lambda) \times H_c^{5-\bullet}(\rS_\Gamma, \tm_{\lambda^\ast})  \longrightarrow \Q,$$ and $$H^\bullet(\partial \rS_\Gamma, \tm_\lambda) \times H^{4-\bullet}(\partial \rS_\Gamma, \tm_{\lambda^\ast})  \longrightarrow \Q.$$ These pairings are compatible with the restriction morphism $r:H^\bullet(\rS_\Gamma, \tm_\lambda) \longrightarrow H^\bullet(\partial \rS_\Gamma, \tm_\lambda)$  and the connecting homomorphism $\delta:H^\bullet(\partial \rS_\Gamma, \tm_\lambda) \longrightarrow H_c^{\bullet+1}(\rS_\Gamma, \tm_\lambda)$ of the long exact sequence in cohomology associated to the pair $(\rS, \partial \rS)$, in the sense that the pairings are compatible with the diagram :

$$\xymatrix{
 \ar@{->}^{r}[d] H^\bullet(\rS_\Gamma, \tm_\lambda) &\hspace{-1.25cm} \times &\hspace{-1.25cm}  H_c^{5-\bullet}(\rS_\Gamma, \tm_{\lambda^\ast})  \ar@{<-}^{\delta}[d] \ar@{->}[r] & \Q   \\
 H^\bullet(\partial \rS_\Gamma, \tm_\lambda) &\hspace{-1.25cm}  \times &\hspace{-1.25cm}  H^{4-\bullet}(\partial \rS_\Gamma, \tm_{\lambda^\ast}) \ar@{->}[r] & \Q }$$ 

$H_{Eis}^\bullet(\rS_\Gamma, \tm_\lambda)$ is the image of the restriction morphism $r$ and therefore, as an implication of the aforementionned compatibility between the pairings, the spaces $H_{Eis}^\bullet(\rS_\Gamma, \tm_\lambda)$ are maximal isotropic subspaces of the boundary cohomology under the Poincar\'e duality. This means that $H_{Eis}^\bullet(\rS_\Gamma, \tm_{\lambda^\ast})$ is the orthogonal space of $H_{Eis}^\bullet(\rS_\Gamma, \tm_{\lambda})$ under this duality.

In particular, one has 
\begin{equation}\label{eq:poincare}\mr{dim} H_{Eis}^\bullet(\rS_\Gamma, \widetilde{\mathcal{M}}_\lambda) + \mr{dim} H_{Eis}^\bullet(\rS_\Gamma, \widetilde{\mathcal{M}}_{\lambda^\ast}) = \frac{1}{2}\Big(\mr{dim} H^\bullet(\partial \rS_\Gamma, \widetilde{\mathcal{M}}_\lambda) + \mr{dim} H^\bullet(\partial \rS_\Gamma, \widetilde{\mathcal{M}}_{\lambda^\ast})\Big).\end{equation}

\subsection{Euler characteristic for boundary and Eisenstein cohomology}

In the next few lines we establish a relation between the homological Euler characteristics of the arithmetic group  and the Euler Characteristic of the Eisenstein cohomology of the arithmetic group, and similarly another relation with the Euler characteristic of the cohomology of the boundary. During this section we will be frequently using the notations $H_{Eis}^\bullet(\mr{SL}_3(\Z), \m_\lambda)$ for $H_{Eis}^\bullet(\rS_\Gamma, \widetilde{\mathcal{M}}_\lambda)$ and $H_{!}^\bullet(\mr{SL}_3(\Z), \m_\lambda)$ for $H_{!}^\bullet(\rS_\Gamma, \widetilde{\mathcal{M}}_\lambda)$ to make it very explicit the arithmetic group we are working with. See Section~\ref{Euler}, for the definition of homological Euler characteristic of $\mr{SL}_3(\Z)$. Note that we can define the ``naive" Euler characteristic of the underlying geometric object as the alternating sum of the dimension of its various cohomology spaces. Following this, we define 
$$ \chi(H_{Eis}^\bullet(\mr{SL}_3(\Z), \m_\lambda))=
\sum_i (-1)^i \dim H_{Eis}^\bullet(\mr{SL}_3(\Z), \m_\lambda)$$
and
$$ \chi(H^\bullet(\partial \rS, \tm_\lambda))=
\sum_i (-1)^i \dim H^\bullet(\partial \rS, \tm_\lambda).$$

The following two statements (Corollary~\ref{4point} and Lemma~\ref{keylemma}) are synthesized in Theorem~\ref{thm 1/2}, which is needed for computing the Eisenstein cohomology of $\mr{SL}_3(\Z)$ (see Theorem~\ref{Eiscoh}).  As a consequence of Theorem~\ref{bdcohsl3}, we obtain the following immediate 
\begin{coro}\label{4point}
\begin{equation*}
\chi(H^\bullet(\partial \rS, \tm_\lambda))= \left\{\begin{array}{cccc}  
& -2(1+\dim S_{m_1+2} +\dim S_{m_2+2} ) \,,  &  m_1, m_2 \,\, \mr{both}\, \mr{even}  \\ 
&\\
&  2(\dim S_{m_1+m_2+3} -\dim S_{m_1+2}) \,, & m_1 \, \mr{even},\, m_2 \, \mr{odd} \\
&\\
& 2(\dim S_{m_1+m_2+3} - \dim S_{m_2+2}) \,, & m_1 \, \mr{odd}, \, m_2 \, \mr{even}\\
&\\
& 0 \,, &  m_1, m_2 \, \,\mr{both\,\, odd}\\
\end{array}\quad\,, \right .  
 \end{equation*}
where we are denoting $dim S_2 = -1$.
\end{coro}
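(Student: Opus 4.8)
The plan is to read off Corollary~\ref{4point} directly from the explicit boundary cohomology computed in Theorem~\ref{bdcohsl3}, since by definition $\chi(H^\bullet(\partial \rS, \tm_\lambda)) = \sum_i (-1)^i \dim H^i(\partial \rS, \tm_\lambda)$. The key structural observation is that in every one of the nine cases of Theorem~\ref{bdcohsl3} the boundary cohomology is concentrated in at most four consecutive degrees, with dimensions expressed through $\dim S_k$ and copies of $\Q$; hence the whole content is the evaluation of an alternating sum, together with the correct packaging of the nine cases onto the four parity-classes of the corollary.

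First I would treat the family with $m_1$ and $m_2$ both even, assembled from Cases 1--4. I would compute the alternating sum in each subcase from the genuine dimensions: in Case 4 (both $m_i \neq 0$) the cohomology sits in degrees $1$ and $3$, each equal to $\Q \oplus S_{m_1+2} \oplus S_{m_2+2}$ and entering with sign $-1$, giving exactly $-2(1 + \dim S_{m_1+2} + \dim S_{m_2+2})$; in Cases 2 and 3 one cuspidal summand and the copy of $\Q$ drop out, yielding $-2\dim S_{m_2+2}$ and $-2\dim S_{m_1+2}$ respectively; and in Case 1 the classes migrate to degrees $0$ and $4$, so that $\chi = +2$. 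The remaining content is the observation that all four numbers coincide with the single expression $-2(1 + \dim S_{m_1+2} + \dim S_{m_2+2})$ under the convention $\dim S_2 = -1$; this normalization is exactly what makes $1 + \dim S_2 = 0$ absorb the simultaneous disappearance of a cuspidal factor and the migration of the $\Q$-summand when an index becomes zero.

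Next I would handle the mixed-parity family, $m_1$ even and $m_2$ odd, built from Cases 5 and 6 (the case $m_1$ odd, $m_2$ even follows by the reflection relating Cases 7 and 8). In the generic subcase (Case 5, relevant index nonzero) the novelty is that $H^2$ is nonzero and equals $S_{m_1+m_2+3} \oplus S_{m_1+m_2+3}$, contributing with the positive sign $(-1)^2 = +1$, while $H^1$ and $H^3$ carry $S_{m_1+2}$ with sign $-1$; the alternating sum is then $2(\dim S_{m_1+m_2+3} - \dim S_{m_1+2})$. In the degenerate subcase (Case 6, $m_1 = 0$) the cohomology concentrates in $H^2$ with two extra copies of $\Q$, and the convention $\dim S_2 = -1$ again matches the formula, since $-2\dim S_2 = +2$ accounts for those two $\Q$'s. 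Finally, the both-odd situation (Case 9) is immediate, as all boundary cohomology vanishes and $\chi = 0$.

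I do not anticipate a genuine obstacle: the statement is flagged as immediate and the only real work is bookkeeping. The one point demanding care is the consistent use of the normalization $\dim S_2 = -1$, which is precisely the device that collapses the finer nine-case division of Theorem~\ref{bdcohsl3} (refined according to whether $m_1$ or $m_2$ equals $0$) into the coarser four-case parity formula of the corollary. I would therefore verify each subcase against the claimed formula explicitly, taking care that the $\Q$-summands and the degenerate $m_i = 0$ contributions are counted once and with the correct sign.
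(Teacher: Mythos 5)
Your proposal is correct and is exactly the paper's route: the corollary is stated there as an immediate consequence of Theorem~\ref{bdcohsl3}, obtained by taking the alternating sum of dimensions in each of the nine cases and using the convention $\dim S_2=-1$ to merge them into the four parity classes. Your case-by-case arithmetic (including the checks that Case 1 gives $+2=-2(1-1-1)$ and that the two extra $\Q$-summands in Cases 6 and 7 are absorbed by $-2\dim S_2=+2$) agrees with the paper.
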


As discussed in the previous paragraph, we now state and prove a simple relation between Euler characteristic of the Eisenstein cohomology and the homological Euler characteristic.

\begin{lema}\label{keylemma}
\[\chi(H^\bullet_{Eis}(\mr{SL}_3(\Z),\m_\lambda))=\chi_h(\mr{SL}_3(\Z),\m_\lambda)\]
\end{lema}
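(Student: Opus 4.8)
The plan is to show that the stated identity is equivalent to the vanishing of the Euler characteristic of the interior cohomology, and then to deduce that vanishing by playing Poincaré duality against the outer symmetry of $\mr{SL}_3$. First I would use that $H_{Eis}^\bullet(\rS,\tm_\lambda)$ is by definition the image of the restriction morphism $r$ in~\eqr{eis}, while $H_!^\bullet(\rS,\tm_\lambda)$ is its kernel, so that the tautological sequence
\[0 \longrightarrow H_{!}^\bullet(\rS, \tm_\lambda) \longrightarrow H^\bullet(\rS, \tm_\lambda) \xrightarrow{\;r\;} H^\bullet_{Eis}(\rS, \tm_\lambda) \longrightarrow 0\]
is exact in each degree. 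Additivity of the Euler characteristic along this sequence gives $\chi(H^\bullet(\rS,\tm_\lambda)) = \chi(H_!^\bullet(\rS,\tm_\lambda)) + \chi(H^\bullet_{Eis}(\rS,\tm_\lambda))$. Since $H^\bullet(\mr{SL}_3(\Z),\m_\lambda) = H^\bullet(\rS,\tm_\lambda)$, the definition~\eqr{hec} identifies the left-hand side with $\chi_h(\mr{SL}_3(\Z),\m_\lambda)$. Hence the lemma reduces to the single claim $\chi(H_!^\bullet(\rS,\tm_\lambda)) = 0$.

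To establish this vanishing I would first invoke Poincaré duality. The space $\rS$ has odd dimension $5$, and the pairing recalled in the previous subsection restricts to a nondegenerate pairing on interior cohomology, $H_!^i(\rS,\tm_\lambda) \times H_!^{5-i}(\rS,\tm_{\lambda^\ast}) \to \Q$ (the interior cohomology being exactly the locus on which the global pairing is nondegenerate). Thus $\dim H_!^i(\rS,\tm_\lambda) = \dim H_!^{5-i}(\rS,\tm_{\lambda^\ast})$, and re-indexing the alternating sum by $j = 5-i$ yields the sign-reversal
\[\chi(H_!^\bullet(\rS,\tm_\lambda)) = \sum_i (-1)^i \dim H_!^{5-i}(\rS,\tm_{\lambda^\ast}) = -\,\chi(H_!^\bullet(\rS,\tm_{\lambda^\ast})),\]
the minus sign coming from the odd dimension.

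Next I would use the outer automorphism $\theta(g) = (g^{\mathrm{T}})^{-1}$ of $\mr{SL}_3$. It preserves $\mr{SL}_3(\Z)$ and fixes $\mr{K}_\infty = \mr{SO}(3)$ pointwise, so it descends to a self-map of $\rS$ and extends to the Borel--Serre compactification, interchanging $\rP_1$ and $\rP_2$ (consistent with the reflection symmetries between Cases $5$/$8$ and $6$/$7$ in Theorem~\ref{bdcohsl3}). Because $\rho_\lambda \circ \theta \cong \rho_{\lambda^\ast}$, pullback along $\theta$ gives degree-preserving isomorphisms $H^i(\rS,\tm_\lambda) \cong H^i(\rS,\tm_{\lambda^\ast})$ compatible with $H_c^\bullet \to H^\bullet$, hence $H_!^i(\rS,\tm_\lambda) \cong H_!^i(\rS,\tm_{\lambda^\ast})$ for all $i$, so $\chi(H_!^\bullet(\rS,\tm_\lambda)) = \chi(H_!^\bullet(\rS,\tm_{\lambda^\ast}))$. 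Combined with the sign-reversal, this forces $\chi(H_!^\bullet(\rS,\tm_\lambda)) = 0$, and the lemma follows.

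The step I expect to require the most care is precisely this vanishing: one must justify that the global Poincaré pairing restricts to a \emph{perfect} pairing on the image of $H_c^\bullet$ in $H^\bullet$, so that duality genuinely relates $\dim H_!^i(\lambda)$ to $\dim H_!^{5-i}(\lambda^\ast)$, and one must verify that $\theta$ really realizes the involution $\lambda \leftrightarrow \lambda^\ast$ on the local systems and their interior cohomology. Once both symmetries are correctly pinned to the pair $(\lambda,\lambda^\ast)$, the odd dimension of $\rS$ does the rest. Alternatively, one could first note, using $\mathrm{vcd}(\mr{SL}_3(\Z)) = 3$ together with duality, that $H_!^\bullet$ is concentrated in degrees $2$ and $3$, reducing the claim to $\dim H_!^2 = \dim H_!^3$, which the same two symmetries then supply.
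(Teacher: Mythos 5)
Your proof is correct, but it takes a genuinely different route from the paper's at the decisive step. Both arguments reduce the lemma to showing that the interior cohomology contributes nothing to the Euler characteristic, and both invoke the Poincar\'e duality between $H_!^i(\rS,\tm_\lambda)$ and $H_!^{5-i}(\rS,\tm_{\lambda^\ast})$. The paper, however, obtains $\dim H_!^2 = \dim H_!^3$ by combining two external inputs: the Bass--Milnor--Serre theorem (to kill $H^1$, so that everything is concentrated in degrees $2$ and $3$) and the vanishing of $H_!^\bullet(\rS,\tm_\lambda)$ for non-self-dual $\lambda$ (Lemma 3.2 of \cite{Harder2018}), the self-dual case then being handled by duality alone. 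You replace both inputs by the single observation that the outer automorphism $g \mapsto (g^{\mathrm{T}})^{-1}$ induces degree-preserving isomorphisms $H_!^i(\rS,\tm_\lambda)\cong H_!^i(\rS,\tm_{\lambda^\ast})$, so that $\chi(H_!^\bullet(\tm_\lambda)) = \chi(H_!^\bullet(\tm_{\lambda^\ast})) = -\chi(H_!^\bullet(\tm_\lambda)) = 0$, the sign coming from the odd dimension of $\rS$. Your version is more economical in its inputs (no Bass--Milnor--Serre, no vanishing theorem for non-self-dual coefficients, and it covers $\lambda = 0$ uniformly, a case the paper's proof formally excludes by assuming $\lambda\neq 0$), and it is degree-agnostic. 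What the paper's route buys is the finer degree-by-degree information ($h^0=h^1=0$, $h^2_!=h^3_!$, concentration of the Eisenstein classes in degrees $2$ and $3$) that is reused immediately afterwards in the proof of Theorem~\ref{Eiscoh}; the Euler-characteristic identity alone would not suffice there. The one step you rightly flag as delicate --- that the global pairing descends to a perfect pairing on interior cohomology --- is standard and follows from the compatibility of the pairings with $r$ and $\delta$ recalled in the subsection on Poincar\'e duality.
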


\begin{proof} 
Let us denote by $h^i$,  $h^i_{Eis}$ and $h^i_{!}$ the dimension of the spaces $H^i(\mr{SL}_3(\Z),\m_\lambda)$,  $H^i_{Eis}(\mr{SL}_3(\Z),\m_\lambda)$ and $H^i_{!}(\mr{SL}_3(\Z),\m_\lambda)$, respectively. By definition, we have
\[\chi_h(\mr{SL}_3(\Z),\m_\lambda) =\sum_{i=0}^3(-1)^ih^i \,.\]
Assume $\lambda\neq 0$. Then $h^0=0$. Following Bass-Milnor-Serre, Corollary 16.4 in~\cite{BMS67}, we know that $h^1=0$. 

On the other hand, let $\m_{\lambda^\ast}$ be the dual representation of $\m_\lambda$. In our case, if $\lambda = (m_1 + m_2) \varepsilon_1 + m_2\varepsilon_2$, then $\lambda^\ast = (m_1 + m_2) \varepsilon_1 + m_1\varepsilon_2$. One has by Poincar\'e duality that $H^q_{!}(\mr{SL}_3(\Z),\m_\lambda)$ is dual to $H^{5-q}_{!}(\mr{SL}_3(\Z),\m_{\lambda^\ast})$. Moreover, if $\lambda \neq \lambda^\ast$ then $H^\bullet_{!}(\mr{SL}_3(\Z),\m_\lambda) = 0$ (see for example Lemma 3.2 of \cite{Harder2018}). Therefore one has, in all the cases, $h^2_{!}=h^3_{!}$.
Using that, we obtain
\begin{eqnarray*}
\chi_h(\mr{SL}_3(\Z),\m_\lambda)
&=&h^2-h^3
\\
&=&h^2_{Eis}
+
h^2_{!}
-
h^3_{Eis}
-
h^3_{!}
\\
&=&
h^2_{Eis}
-
h^3_{Eis}\\
&=&\chi(H^\bullet_{Eis}(\mr{SL}_3(\Z),\m_\lambda)).
\end{eqnarray*}
\end{proof}

We now state the following key result.
\begin{thm}
\label{thm 1/2}
\[
\chi(H^\bullet_{Eis}(\mr{SL}_3(\Z),\m_\lambda))
=
\frac{1}{2}\chi(H^\bullet(\partial \mr{S},\tm_\lambda)).
\]
\end{thm}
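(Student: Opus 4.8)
The plan is to obtain the identity by \emph{synthesizing} the two preceding results rather than recomputing anything. First I would invoke Lemma~\ref{keylemma} to rewrite the left-hand side: since $\chi(H^\bullet_{Eis}(\mr{SL}_3(\Z),\m_\lambda))=\chi_h(\mr{SL}_3(\Z),\m_\lambda)$, the assertion reduces to the purely numerical claim
\[
\chi_h(\mr{SL}_3(\Z),\m_\lambda)=\tfrac12\,\chi(H^\bullet(\partial\mr{S},\tm_\lambda)).
\]
The right-hand side is given explicitly, case by case in the parities of $m_1$ and $m_2$, by Corollary~\ref{4point}, and the left-hand side is given, in exactly the same four parity regimes, by Theorem~\ref{euler}. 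Hence the entire proof reduces to a term-by-term comparison of these two lists, using throughout the convention $\dim S_2=-1$.

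Carrying out the comparison, I would treat the four cases in turn. When $m_1,m_2$ are both even, Theorem~\ref{euler} gives $\chi_h=-1-\dim S_{m_1+2}-\dim S_{m_2+2}$, while Corollary~\ref{4point} gives $\chi(H^\bullet(\partial\mr{S},\tm_\lambda))=-2\bigl(1+\dim S_{m_1+2}+\dim S_{m_2+2}\bigr)$, which is exactly twice the former. When $m_1$ is even and $m_2$ is odd, the two values are $-\dim S_{m_1+2}+\dim S_{m_1+m_2+3}$ and $2\bigl(\dim S_{m_1+m_2+3}-\dim S_{m_1+2}\bigr)$, again in the ratio $1:2$; the case $m_1$ odd, $m_2$ even is identical after interchanging the roles of $m_1$ and $m_2$; and when both are odd both quantities vanish. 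In every case the factor $\tfrac12$ appears exactly, which proves the theorem.

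Since both ingredients are already established, there is no genuine analytic obstacle; the only point requiring care is that the same degenerate convention $\dim S_2=-1$ is used on both sides (it enters Theorem~\ref{euler} and Corollary~\ref{4point} identically), so that no spurious discrepancy arises in the low-weight cases. I would also note that the identity admits a more conceptual reading: the Poincar\'e-duality pairing $H^q(\partial\mr{S},\tm_\lambda)\times H^{4-q}(\partial\mr{S},\tm_{\lambda^\ast})\to\Q$ makes $H^\bullet_{Eis}(\mr{S},\tm_\lambda)$ and $H^\bullet_{Eis}(\mr{S},\tm_{\lambda^\ast})$ mutually orthogonal maximal isotropic subspaces, which yields $\chi(H^\bullet_{Eis}(\m_\lambda))+\chi(H^\bullet_{Eis}(\m_{\lambda^\ast}))=\chi(H^\bullet(\partial\mr{S},\tm_\lambda))$; combining this with Lemma~\ref{keylemma} and the symmetry $\chi_h(\m_\lambda)=\chi_h(\m_{\lambda^\ast})$ (each torsion element of $\mr{SL}_3(\Z)$ being conjugate to its inverse, so that the traces entering the Wall formula agree) recovers the factor $\tfrac12$ without appeal to the explicit tables.
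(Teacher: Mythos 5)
Your proof is correct and is essentially the paper's own argument: the authors likewise combine Lemma~\ref{keylemma} with a direct comparison of Corollary~\ref{4point} against the Euler characteristic tables (equivalently Theorem~\ref{euler}) to obtain $\chi(H^\bullet(\partial \mr{S},\tm_\lambda))=2\chi_h(\mr{SL}_3(\Z),\m_\lambda)$, with the same convention $\dim S_2=-1$ on both sides. Your closing Poincar\'e-duality remark, using the isotropy of the Eisenstein subspaces together with $\chi_h(\m_\lambda)=\chi_h(\m_{\lambda^\ast})$, is a valid table-free alternative, but it is not the route the paper takes.
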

\begin{proof}
Using Corollary \ref{4point} and  Tables 8 and 9, we find that
$$
\chi(H^\bullet(\partial \mr{S},\tm_\lambda))
=
2\chi_h(\mr{SL}_3(\Z),\m_\lambda)).$$
Using Lemma \ref{keylemma}, we have
$$2\chi_h(\mr{SL}_3(\Z),\m_\lambda))
=
2\chi(H^\bullet_{Eis}(\mr{SL}_3(\Z),\m_\lambda)).
$$
Therefore,
$$
\chi(H^\bullet(\partial \mr{S},\tm_\lambda))
=2\chi(H^\bullet_{Eis}(\mr{SL}_3(\Z),\m_\lambda)).
$$
\end{proof}

\subsection{Main theorem on Eisenstein cohomology for $\mr{SL}_3(\Z)$}

The following is the main result of the paper, that gives both the dimension of the Eisenstein cohomology together with its sources - the corresponding parabolic subgroups. It is stated using different cases that cover all possible highest weight representations. A central part of the proof is based on Theorem~\ref{bdcohsl3} and Theorem~\ref{thm 1/2}.

\begin{thm}\label{Eiscoh}
$\,\,\,$
\begin{enumerate} 
\item Case 1 : $m_1 = m_2 = 0$  then 
\begin{equation*}
H^{q}_{Eis}(\mr{SL}_3(\Z), \m_\lambda) =\left\{\begin{array}{cccc}  
&\mathbb{Q} \quad \mr{for}\quad  q=0 \\ 
&\\
&0  \qquad \mr{otherwise}\\
\end{array}\qquad\,. \right .
\end{equation*}

\item  Case 2 :  $m_1=0$ and $m_2 \neq 0$, $m_2 $ even

\begin{eqnarray*}
H^{q}_{Eis}(\mr{SL}_3(\Z), \m_\lambda) 
& = &
\left\{\begin{array}{cccc}  
& S_{m_2+2}\,, & q=3 \\
&\\
&0  \,, & \mr{otherwise}\\
\end{array}\,. \right .
\end{eqnarray*}

\item  Case 3 :  $m_1 \neq 0$, $m_1$ even and $m_2 =0$

\begin{eqnarray*}
H^{q}_{Eis}(\mr{SL}_3(\Z), \m_\lambda) 
& = & 
\left\{\begin{array}{cccc}  
& S_{m_1+2} \,, & q=3 \\
&\\
&0  \,, & \mr{otherwise}\\
\end{array}\,. \right .
\end{eqnarray*}

\item  Case 4 :  $m_1 \neq 0$, $m_1$ even and $m_2 \neq 0$, $m_2$ even, then

\begin{eqnarray*}
H^{q}_{Eis}(\mr{SL}_3(\Z), \m_\lambda) 
& = &
\left\{\begin{array}{cccc}  
& \Q \oplus S_{m_1+2}\oplus S_{m_2+2} \,, & q=3 \\
&\\
&0  \,, & \mr{otherwise}\\
\end{array}\,. \right .
\end{eqnarray*}

\item   Case 5 :  $m_1 \neq 0$, $m_1$ even and $m_2$ odd, then

\begin{eqnarray*}
H^{q}_{Eis}(\mr{SL}_3(\Z), \m_\lambda) =
\left\{\begin{array}{cccc}  
& S_{m_1+m_2+3}\,, & q=2\\
&\\
& S_{m_1+2}\,, & q=3\\
&\\
& 0 \,, & \mr{otherwise}
\end{array} \,. \right.
 \end{eqnarray*}

\item  Case 6 :  $m_1 = 0$ and $m_2$ odd, then

\begin{eqnarray*}
H^{q}_{Eis}(\mr{SL}_3(\Z), \m_\lambda) 
& = & 
\left\{\begin{array}{cccc}
& S_{m_2+3} \oplus \Q \,, & q=2 \\
&\\
& 0 \,,& \mr{otherwise}
\end{array}\qquad\,, \right .\\
\end{eqnarray*}

\item  Case 7 :  $m_1 $ odd and  $m_2 =0$
\begin{eqnarray*}
H^{q}_{Eis}(\mr{SL}_3(\Z), \m_\lambda) 
& = & \left\{\begin{array}{cccc}
& S_{m_1+3} \oplus \Q \,, & q=2 \\
& \\
& 0 \,,& \mr{otherwise}
\end{array}\qquad\,. \right .
\end{eqnarray*}

\item  Case 8 :  $m_1$ odd and $m_2 \neq 0$, $m_2$ even, then
\begin{eqnarray*}\label{eq:520}
H^{q}_{Eis}(\mr{SL}_3(\Z), \m_\lambda) 
& = & \left\{\begin{array}{cccc}  
& S_{m_1+m_2+3} \,, & q=2\\
&\\
& S_{m_2+2}\,, & q=3\\
&\\
& 0 \,, & \mr{otherwise}
\end{array}\qquad\,, \right .\\
\end{eqnarray*}

\item  Case 9 :  $m_1$ odd and $m_2$ odd, then
\begin{equation*}
H^{q}_{Eis}(\mr{SL}_3(\Z), \m_\lambda) = 0 \,, \qquad \forall q \,.
\end{equation*}
\end{enumerate}
\end{thm}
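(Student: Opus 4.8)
The Eisenstein cohomology is by definition (see~\eqref{eq:eis}) the image of the restriction map $r\colon H^\bullet(\rS,\tm_\lambda)\to H^\bullet(\partial\rS,\tm_\lambda)$, so the plan is to determine this image degree by degree from the three inputs already available: the boundary cohomology of Theorem~\ref{bdcohsl3}, the Euler-characteristic identity of Theorem~\ref{thm 1/2} together with its explicit value in Corollary~\ref{4point}, and the Poincar\'e duality recorded in~\eqref{eq:poincare}. The trivial coefficient system (Case~1) is handled directly: $r$ maps the constants $H^0(\rS,\Q)=\Q$ isomorphically onto their image, while $H^q(\rS,\Q)=0$ for $q\geq 4$ by the bound on the cohomological dimension of $\mr{SL}_3(\Z)$, so $H^\bullet_{Eis}$ is $\Q$ in degree $0$ and vanishes otherwise. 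For $\lambda\neq 0$ the same dimension bound gives $H^q_{Eis}=0$ for $q\geq 4$; moreover $H^0(\rS,\tm_\lambda)=0$ since $\m_\lambda$ is a nontrivial irreducible representation, and $H^1(\rS,\tm_\lambda)=0$ by Bass--Milnor--Serre (as invoked in Lemma~\ref{keylemma}). Hence $H^\bullet_{Eis}$ is concentrated in degrees $2$ and $3$, and the task reduces to those two pieces.

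First I would settle degree $3$ by Poincar\'e duality. By~\eqref{eq:poincare} the space $H^{3}_{Eis}(\rS,\tm_\lambda)$ is the orthogonal complement, under the perfect pairing $H^{3}(\partial\rS,\tm_\lambda)\times H^{1}(\partial\rS,\tm_{\lambda^\ast})\to\Q$, of $H^{1}_{Eis}(\rS,\tm_{\lambda^\ast})$. The latter vanishes because $H^1(\rS,\tm_{\lambda^\ast})=0$ (again Bass--Milnor--Serre, valid since $\lambda^\ast\neq 0$), so its orthogonal complement is everything: $H^{3}_{Eis}(\rS,\tm_\lambda)=H^{3}(\partial\rS,\tm_\lambda)$. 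Thus $r$ is surjective in degree $3$, and the degree-$3$ entry of the theorem is read off verbatim from Theorem~\ref{bdcohsl3} in each of the nine cases.

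Next I would pin down degree $2$ via the Euler characteristic. Writing $h^i_{Eis}=\dim H^i_{Eis}(\rS,\tm_\lambda)$, the vanishing just established gives $\chi(H^\bullet_{Eis})=h^2_{Eis}-h^3_{Eis}$, so Theorem~\ref{thm 1/2} yields $h^2_{Eis}=h^3_{Eis}+\tfrac12\chi(H^\bullet(\partial\rS,\tm_\lambda))$, with the right-hand side supplied explicitly by Corollary~\ref{4point} (and the convention $\dim S_2=-1$). As $h^3_{Eis}$ is now known, this determines $\dim H^2_{Eis}$; running through the four parity regimes reproduces the dimensions asserted in Cases~2--9. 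In particular $H^2_{Eis}=0$ whenever $m_1,m_2$ are both even (Cases~2--4), the mixed-parity regimes (Cases~5--8) yield a nonzero degree-$2$ space, and for $m_1,m_2$ both odd (Case~9) the boundary cohomology already vanishes, forcing $H^\bullet_{Eis}=0$.

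The delicate step, which I expect to be the main obstacle, is to name the precise summand that $H^2_{Eis}$ realizes inside $H^2(\partial\rS,\tm_\lambda)$ rather than merely its dimension. Here I would use that the boundary cohomology splits into blocks distinguished by their source --- the cuspidal blocks $H^1_!(\rS^{\mr{M}_i},\cdot)\cong S_{m_1+m_2+3}$ attached to the two maximal parabolics, and the one-dimensional rational blocks (such as the space $W$ and the $E_2^{1,1}$-contribution appearing in Cases~6 and~7) --- and that these blocks are respected by $r$ and paired among themselves under~\eqref{eq:poincare}. In the mixed-parity cases two isomorphic cuspidal copies $S_{m_1+m_2+3}$ occur, one from $\partial_1$ via $s_1$ and one from $\partial_2$ via $s_2$; the maximal-isotropy property of~\eqref{eq:poincare}, combined with the already-computed dimension, forces $H^2_{Eis}$ to meet this block in a single copy $S_{m_1+m_2+3}$, and likewise to pick out one rational line in the trivial block in Cases~6 and~7. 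Finally, the reflection symmetry exchanging $\rP_1$ and $\rP_2$ (relating Cases~2/3, 5/8 and 6/7) allows the corresponding computations to be transcribed rather than repeated, completing all nine cases.
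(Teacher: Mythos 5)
Your proposal is correct and recovers all nine cases, but it organizes the proof differently from the paper in a way worth recording. The shared skeleton is identical: Bass--Milnor--Serre plus the cohomological-dimension bound confine $H^\bullet_{Eis}$ to degrees $2$ and $3$ for $\lambda\neq 0$, Theorem~\ref{thm 1/2} gives $h^2_{Eis}-h^3_{Eis}=\tfrac12\chi(H^\bullet(\partial \rS,\tm_\lambda))$, and in both treatments the final naming of summands inside $H^2(\partial\rS,\tm_\lambda)$ is really a dimension count. The divergence is in degree $3$. The paper handles it in three separate regimes: in Cases 2--4 it first gets $h^2_{Eis}=0$ from $h^2_\partial=0$ and then reads off $h^3_{Eis}=-\chi_h$; in Cases 6--7 it uses $h^3_\partial=0$; and in Cases 5/8 it adds and subtracts the summed dimension identity \eqr{P} and the Euler-characteristic identity \eqr{14} for the pair $(\lambda,\lambda^\ast)$, then invokes the containment of $H^3_{Eis}$ in the boundary cohomology (together with regularity) to force equality. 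You instead prove once and for all that $r^3$ is surjective for every $\lambda\neq 0$: since $H^1_{Eis}(\rS,\tm_{\lambda^\ast})=0$, the degree-by-degree form of the statement in Section 6.2 --- that $H^\bullet_{Eis}(\rS,\tm_{\lambda^\ast})$ is the exact annihilator of $H^\bullet_{Eis}(\rS,\tm_{\lambda})$ under the perfect pairing $H^3(\partial \rS,\tm_\lambda)\times H^1(\partial \rS,\tm_{\lambda^\ast})\to\Q$ --- yields $H^3_{Eis}=H^3(\partial \rS,\tm_\lambda)$ in one line. This genuinely streamlines the argument: it collapses the paper's three case groups, removes the add/subtract manipulation and the regularity remark in Cases 5/8, and makes transparent why degree $3$ always exhausts the boundary. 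The price is that you need the full strength of the orthogonality statement (exact mutual annihilators in complementary degrees), not merely its dimension shadow \eqr{poincare}; that stronger statement is asserted in the paper and follows from the compatibility of the pairings with $r$ and $\delta$, so nothing is missing. Your degree-$2$ step and the concluding identification of summands (one copy of $S_{m_1+m_2+3}$ out of two, one rational line in Cases 6/7) are at exactly the same level of precision as the paper's own proof, which likewise only pins down dimensions at that stage.
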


\begin{proof}
Let \[ h^i=\dim H^i(\rm{SL}_3(\Z),{\mathcal M}_\lambda),\] \[h^i_! = \dim H^i_!(\rm{SL}_3(\Z),{\mathcal M}_\lambda),\] and \[ h^i_{Eis} := h^i_{Eis}(\widetilde{\mathcal M}_\lambda) = \dim H^i_{Eis}(\rm{SL}_3(\Z),\mathcal M_\lambda),\] \[h^i_{\partial} := h^i_{\partial}(\widetilde{\mathcal M}_\lambda) = \dim H^i(\partial \rS,\widetilde{\mathcal M}_\lambda).\]
For any nontrivial highest weight representation we have $h^0=0$, since any proper $\rm{SL}_3(\Z)$-invariant subrepresentation of ${\mathcal M}_\lambda$ is trivial. Also, $h^1=0$, from Bass-Milnor-Serre~\cite{BMS67}, Corollary 16.4. Therefore, $h^0_{Eis}=h^1_{Eis}=0$. Following~\cite{Soule78} and~\cite{BoSe73},
we know that the cohomological dimension of $\mr{SL}_3(\Z)$ is 3. 
Moreover, $h^2_!=h^3_!$ since the corresponding cohomology groups are dual to each other.
Therefore,
\begin{equation}
\label{eq:chiEis}
\chi_h(\mr{SL}_3(\Z),\m_\lambda)
=h^2-h^3=h^2_{Eis}-h^3_{Eis}.
\end{equation}
\subsubsection*{Cases 2, 3 and 4}
We have that 
$h^2_\partial=0$.
Therefore, 
$h^2_{Eis}=0$.
From equation~\eqref{eq:chiEis} and Theorem~\ref{thm 1/2}, we obtain
$h^3_{Eis}
=-\chi_h(\rm{SL}_3(\Z),{\mathcal M}_\lambda)
=-\frac{1}{2}\chi(H^\bullet(\partial\rS,\tm_\lambda)).$
Using Theorem 11, we conclude the formulas for case 2 and case 3 of Theorem 15.
\subsubsection*{Cases 6 and 7}
We have that 
$h^3_\partial=0$.
Therefore, 
$h^3_{Eis}=0$.
From equation~\eqref{eq:chiEis} and Theorem~\ref{thm 1/2}, we obtain
$h^2_{Eis}
=\chi_h(\mr{SL}_3(\Z),\m_\lambda)
=\frac{1}{2}\chi(H^\bullet(\partial \rS,\widetilde{\mathcal M}_\lambda)).$
Using Theorem~\ref{bdcohsl3}, we conclude the formulas for case 6 and case 7 of Theorem~\ref{Eiscoh}.
\subsubsection*{Cases 5 and 8}
The two cases are dual to each other. Thus it is enough to consider only case 5. 
From Poincar\'e Duality~\eqr{poincare}, we have
\begin{equation}
\label{eq:P}
\sum_i 
\left(
h^i_{Eis}(\widetilde{\mathcal M}_\lambda)
+h^i_{Eis}(\widetilde{\mathcal M}_{\lambda^*})
\right)
=
\frac{1}{2}
\sum_i 
\left(
h^i_{\partial}(\widetilde{\mathcal M}_\lambda)
+h^i_{\partial}(\widetilde{\mathcal M}_{\lambda^*})
\right)
\end{equation}
From Theorem
~\ref{thm 1/2}, we have
\begin{equation}
\label{eq:14}\sum_i 
(-1)^i \left(
h^i_{Eis}(\widetilde{\mathcal M}_\lambda)
+h^i_{Eis}(\widetilde{\mathcal M}_{\lambda^*})
\right)
=
\frac{1}{2}
\sum_i 
(-1)^i \left(
h^i_{\partial}(\widetilde{\mathcal M}_\lambda)
+h^i_{\partial}(\widetilde{\mathcal M}_{\lambda^*})
\right)
\end{equation}
Adding equations \eqref{eq:P} and \eqref{eq:14}, we obtain
\[
h^2_{Eis}(\widetilde{\mathcal M}_\lambda)
+h^2_{Eis}(\widetilde{\mathcal M}_{\lambda^*})
=
\frac{1}{2} 
\left(
h^2_{\partial}(\widetilde{\mathcal M}_\lambda)
+h^2_{\partial}(\widetilde{\mathcal M}_{\lambda^*}) \right).
\]
Subtracting equations~\eqref{eq:P} and~\eqref{eq:14}, we obtain
\begin{equation} \label{eq:odd89}
h^3_{Eis}(\widetilde{\mathcal M}_\lambda)
+h^3_{Eis}(\widetilde{\mathcal M}_{\lambda^*})
=
\frac{1}{2} 
\left(
h^3_{\partial}(\widetilde{\mathcal M}_\lambda)
+h^3_{\partial}(\widetilde{\mathcal M}_{\lambda^*}) \right) + \frac{1}{2} 
\left(
h^1_{\partial}(\widetilde{\mathcal M}_\lambda)
+h^1_{\partial}(\widetilde{\mathcal M}_{\lambda^*}) \right).
\end{equation}
Also, $\m_\lambda$ is a regular representation. Therefore,
\[
H^3_{Eis}(\mathrm{SL}_3(\Z),\mathcal{M}_\lambda) \subset
H^3(\partial \rS,\widetilde{\mathcal{M}}_\lambda)=S_{m_1+2}\,,\]
and
\[H^3_{Eis}(\mathrm{SL}_3(\Z),\mathcal{M}_{\lambda^*})\subset H^3(\partial \rS,\widetilde{\mathcal{M}}_{\lambda^*})=S_{m_2+2}.\]
Form, equation~\eqref{eq:odd89}, we have
\[
h^3_{Eis}(\widetilde{\mathcal{M}}_\lambda) + h^3_{Eis}(\widetilde{\mathcal{M}}_{\lambda^*})
=
\dim S_{m_1+2} + \dim S_{m_2+2}.
\]
Therefore, the above inclusions are equalities, \ie
\[H^3_{Eis}(\mathrm{SL}_3(\Z),\mathcal{M}_\lambda) = S_{m_1+2}\,, \quad   H^3_{Eis}(\mathrm{SL}_3(\Z),\mathcal{M}_{\lambda^*}) = S_{m_2+2}.
\]
Then 
\begin{equation}\label{eq:h2Eis}
h^2_{Eis}(\widetilde{\mathcal M}_\lambda)
=
\chi_h(\mathrm{SL}_3(\Z),{\mathcal M}_\lambda)
+h^3_{Eis}(\widetilde{\mathcal M}_\lambda)=
\dim S_{m_1+m_2+3}.
\end{equation}

Since 
$H^2_{Eis}(\mathrm{SL}_3(\Z),\mathcal M_{\lambda})
\subset
H^2(\partial \rS,\widetilde{\mathcal M}_{\lambda}),$ therefore
from Theorem~\ref{bdcohsl3} and equation~\eqr{h2Eis}, we conclude that
\[H^2_{Eis}(\mathrm{SL}_3(\Z),\m_{\lambda})
=
S_{m_1+m_2+3}.
\]
\end{proof}

Note that in case of $\mr{GL}_3(\Z)$, its highest weight representation $\m_{\lambda}$ is defined for highest weight $\lambda=m_1 \gamma_1 + m_2 \gamma_2 + m_3 \gamma_3$ with $\gamma_1 =\epsilon_1, \gamma_2 = \epsilon_1+ \epsilon_2, \gamma_3 = \epsilon_1+ \epsilon_2 +\epsilon_3$. In this case the cohomology groups $H^{q}(\rm{GL}(3,\Z),\m_\lambda)$ can be described explicitly which we state in the following lemma.

\begin{lema}\label{sl3togl3}
Let $\m_\lambda$ be the highest weight representation of $\mr{GL}_3(\Z)$ with $\lambda=m_1 \gamma_1 + m_2 \gamma_2 + m_3 \gamma_3$, then
{\small
\begin{equation*}
H^{q}(\mr{GL}_3(\Z), \m_{\lambda}) =\left\{\begin{array}{cccc}  
& 0 ,&  m_1+2m_2+3m_3 \equiv 1 (mod \, 2)  \\ 
&\\
&{{H^0(\mathbb{G}_m(\Z),H^{q}(\mr{SL}_3(\Z),\m_\nu))=H^{q} (\mr{SL}_3(\Z),\m_{\nu})}}, & m_1+2m_2+3m_3 \equiv 0 (mod \, 2) \\
\end{array}\qquad\,. \right.
\end{equation*}}
where $\m_\nu = \left.\m_{\lambda}\right|_{\mr{SL}_3}$, i.e. $\nu$ is the highest weight of $\mr{SL}_3$ given by $\nu=(m_1+m_2)\epsilon_1 + m_2\epsilon_2$.
\end{lema}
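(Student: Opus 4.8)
The plan is to exploit the short exact sequence
\begin{equation*}
1 \longrightarrow \mr{SL}_3(\Z) \longrightarrow \mr{GL}_3(\Z) \xrightarrow{\det} \mathbb{G}_m(\Z) \longrightarrow 1,
\end{equation*}
where $\mathbb{G}_m(\Z) = \Z^\times = \{\pm 1\}$ is a finite group of order $2$. Since all coefficient systems here are $\Q$-vector spaces and $2$ is invertible in $\Q$, the Lyndon--Hochschild--Serre spectral sequence for this extension collapses: the higher cohomology of a finite group with values in a $\Q$-vector space vanishes, so that for every $q$ one obtains
\begin{equation*}
H^q(\mr{GL}_3(\Z), \m_\lambda) = H^0\big(\mathbb{G}_m(\Z), H^q(\mr{SL}_3(\Z), \m_\nu)\big) = H^q(\mr{SL}_3(\Z), \m_\nu)^{\mathbb{G}_m(\Z)},
\end{equation*}
where $\m_\nu = \m_\lambda|_{\mr{SL}_3}$ carries the residual action of the quotient. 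Here I also use that restricting the highest weight $\lambda = (m_1+m_2+m_3)\epsilon_1 + (m_2+m_3)\epsilon_2 + m_3\epsilon_3$ to the torus of $\mr{SL}_3$, where $\epsilon_1+\epsilon_2+\epsilon_3 = 0$, yields precisely $\nu = (m_1+m_2)\epsilon_1 + m_2\epsilon_2$.

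It then remains to determine the action of the quotient on $H^q(\mr{SL}_3(\Z), \m_\nu)$. I would take the central element $-\mr{I}_3 \in \mr{GL}_3(\Z)$, which has determinant $-1$ and hence represents the nontrivial coset, as a lift of the generator of $\mathbb{G}_m(\Z)$. Because $-\mr{I}_3$ is central, conjugation by it acts trivially on $\mr{SL}_3(\Z)$; consequently the induced action on $H^q(\mr{SL}_3(\Z), \m_\nu)$ coincides with the scalar by which $-\mr{I}_3$ acts on the coefficient module $\m_\nu$. Evaluating the weight $\lambda$ on $-\mr{I}_3 = \diag{-1,-1,-1}$ gives the scalar $(-1)^{(m_1+m_2+m_3)+(m_2+m_3)+m_3} = (-1)^{m_1+2m_2+3m_3}$, so the generator of $\mathbb{G}_m(\Z)$ acts on $H^q(\mr{SL}_3(\Z), \m_\nu)$ by multiplication by $(-1)^{m_1+2m_2+3m_3}$.

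The dichotomy now follows at once. If $m_1+2m_2+3m_3$ is odd, the action is by $-1$, and since we work in characteristic zero the space of invariants is $0$. If $m_1+2m_2+3m_3$ is even, the action is trivial, whence $H^0(\mathbb{G}_m(\Z), H^q(\mr{SL}_3(\Z), \m_\nu)) = H^q(\mr{SL}_3(\Z), \m_\nu)$, exactly as claimed. The only point demanding genuine care is the identification of the quotient action with the central scalar: one must choose a \emph{central} lift so that the conjugation contribution is trivial, after which the computation reduces to the elementary weight evaluation above. This is the crux of the argument, and it is precisely what produces the parity condition on $m_1 + 2m_2 + 3m_3$.
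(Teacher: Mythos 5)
Your proof is correct and follows essentially the same route the paper indicates: the Hochschild--Serre spectral sequence for $1 \to \mr{SL}_3(\Z) \to \mr{GL}_3(\Z) \to \mathbb{G}_m(\Z) \to 1$ (degenerate since the quotient is finite and coefficients are $\Q$-vector spaces), followed by the parity condition coming from the central element $-\mr{I}_3$ acting by the scalar $(-1)^{m_1+2m_2+3m_3}$. The paper only sketches this in a one-line remark, so your write-up supplies exactly the details intended, including the correct identification of $\nu=(m_1+m_2)\epsilon_1+m_2\epsilon_2$.
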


Note that the first equality is by Hochschild-Serre spectral sequence and the second one follows from the parity condition. Here $\mathbb{G}_m(\Z)=\{-1,1\}$. We may conclude the above discussion simply in the following corollary.
\begin{coro}\label{thm GL}
Let $\Gamma$ be either $\mr{GL}_3(\Z)$ or $\mr{SL}_3(\Z)$, and $\m_{\lambda}$ be any highest weight representation of $\Gamma$. The following are true.
\begin{enumerate}
\item If $\m_{\lambda}$ is not self dual then \[H^{q}(\Gamma, \m_{\lambda}) = H^{q}_{Eis}(\Gamma, \m_{\lambda}).\]
\item If $\m_{\lambda}$ is  self dual then we have \[H^{q}(\Gamma, \m_{\lambda})= H^{q}_{Eis}(\Gamma, \m_{\lambda}) \oplus H^{q}_{!}(\Gamma, \m_{\lambda}),\]
where $H^{2}_{!}(\Gamma, \m_{\lambda})$ and $H^{3}_{!}(\Gamma, \m_{\lambda})$ are dual to each other, and $H^{0}_{!}(\Gamma, \m_{\lambda})=H^{1}_{!}(\Gamma, \m_{\lambda})=0$.
\end{enumerate}
\end{coro}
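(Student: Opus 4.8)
The plan is to reduce everything to the short exact sequence
$$0 \longrightarrow H^q_!(\mr{SL}_3(\Z),\m_\lambda) \longrightarrow H^q(\mr{SL}_3(\Z),\m_\lambda) \xrightarrow{\ r\ } H^q_{Eis}(\mr{SL}_3(\Z),\m_\lambda) \longrightarrow 0,$$
in which $H^q_!$ is the kernel of the restriction $r$ to the boundary and $H^q_{Eis}$ is its image. Since all groups are $\Q$-vector spaces this sequence splits, so $H^q = H^q_{Eis}\oplus H^q_!$ holds unconditionally; both parts of the corollary therefore amount to controlling the interior cohomology $H^\bullet_!$. I will first settle $\Gamma=\mr{SL}_3(\Z)$ and then transfer to $\mr{GL}_3(\Z)$ through Lemma~\ref{sl3togl3}.

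First I would pin down which interior groups can survive. Since the cohomological dimension of $\mr{SL}_3(\Z)$ is $3$ (see \cite{Soule78} and \cite{BoSe73}), $H^q_!=0$ for $q\geq 4$. Feeding this into Poincar\'e duality for interior cohomology, $H^q_!(\m_\lambda)\cong H^{5-q}_!(\m_{\lambda^*})^*$, immediately gives $H^0_!\cong H^5_!(\m_{\lambda^*})^*=0$ and $H^1_!\cong H^4_!(\m_{\lambda^*})^*=0$ for every $\lambda$, while in degrees $2,3$ the same duality yields $H^2_!(\m_\lambda)\cong H^3_!(\m_{\lambda^*})^*$. Thus $H^2_!$ and $H^3_!$ are the only possibly nonzero interior groups, and they are interchanged by Poincar\'e duality upon replacing $\lambda$ by $\lambda^*$.

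The dichotomy is now immediate. If $\m_\lambda$ is not self-dual, i.e. $\lambda\neq\lambda^*$, then $H^\bullet_!(\mr{SL}_3(\Z),\m_\lambda)=0$ by Lemma~3.2 of \cite{Harder2018}; combined with the splitting this gives $H^q=H^q_{Eis}$, which is part (1). If $\m_\lambda$ is self-dual, then $\lambda=\lambda^*$ and the degree $2,3$ duality specializes to $H^2_!(\m_\lambda)\cong H^3_!(\m_\lambda)^*$, while $H^0_!=H^1_!=0$; substituting into the splitting produces part (2). To pass to $\mr{GL}_3(\Z)$ I would invoke Lemma~\ref{sl3togl3}: the group $H^q(\mr{GL}_3(\Z),\m_\lambda)$ either vanishes (odd parity of $m_1+2m_2+3m_3$, in which case both sides of each asserted identity are zero) or equals $H^q(\mr{SL}_3(\Z),\m_\nu)$ with $\nu=(m_1+m_2)\epsilon_1+m_2\epsilon_2$. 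As this identification is induced by the inclusion $\mr{SL}_3\hookrightarrow\mr{GL}_3$, it commutes with restriction to the boundary and hence respects the interior/Eisenstein splitting; since $\m_\lambda$ is self-dual as a $\mr{GL}_3(\Z)$-module exactly when $m_1=m_2$, i.e. exactly when $\m_\nu$ is self-dual, the $\mr{GL}_3(\Z)$ statement follows verbatim from the $\mr{SL}_3(\Z)$ one.

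The single deep input, and the main obstacle, is the vanishing $H^\bullet_!=0$ for non-self-dual coefficients used in part (1). Everything else is purely formal once the splitting and Poincar\'e duality are available, but this vanishing reflects the theory of automorphic forms: a cohomological cuspidal (or residual) representation of $\mr{SL}_3/\mr{GL}_3$ contributing to interior cohomology with coefficients in $\m_\lambda$ forces, through its infinitesimal character together with its contragredient, the self-duality $\lambda=\lambda^*$. Rather than reprove this I would simply cite Lemma~3.2 of \cite{Harder2018}.
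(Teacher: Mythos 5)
Your proof is correct and follows essentially the same route as the paper: the split exact sequence separating interior and Eisenstein cohomology, Poincar\'e duality pairing $H^q_!(\m_\lambda)$ with $H^{5-q}_!(\m_{\lambda^*})$, the vanishing of interior cohomology for non-self-dual $\lambda$ via Lemma~3.2 of \cite{Harder2018}, and Lemma~\ref{sl3togl3} for the passage to $\mr{GL}_3(\Z)$. The only (welcome) deviation is that you obtain $H^0_!=H^1_!=0$ by dualizing against $H^5_!=H^4_!=0$ (cohomological dimension $3$) rather than from $H^0=H^1=0$, which lets you bypass the Bass--Milnor--Serre input the paper invokes at that step.
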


\begin{rmk}
In Theorem~\ref{Eiscoh} and hence in Corollary~\ref{thm GL}, we obtain exactly the dimensions of the group cohomlogy $H^i(\mr{GL}_3(\Z),\m_\lambda)$ and $H^i(\mr{SL}_3(\Z),\m_\lambda)$, when the highest weight representation $\m_\lambda$ is not self dual. For self dual representations, the result gives lower bounds for the dimensions because the discrepancy between the total cohomology and the Eisenstein cohomology is the inner cohomology (which over $\mathbb{C}$ contains the cuspidal cohomology) that is nonzero only in degrees $2$ and $3$. Even more, because of Poincar\'e duality, the inner cohomology in degree $2$ is dual to the inner cohomology in degree $3$.
\end{rmk}




\section{Ghost Classes}\label{ghost} 

Following the discussion in Section~\ref{basic}, we have 
\begin{equation*} \label{eq:first}
\ldots \rightarrow H_c^q(\rS, \tm_\lambda) \longrightarrow H^q(\rS, \tm_\lambda) \xrightarrow{r^q} H^q(\partial \overline{\rS}, \tm_\lambda) \longrightarrow \ldots
\end{equation*}  
 and the covering $\partial \overline{\rS} = \cup_{\rP \in \mathcal{P}_\mathbb{Q}(G)}\partial_{\rP}$,  which induces a spectral sequence in cohomology connecting to $H^\bullet(\partial \overline{\rS}, \tm_\lambda)$, leads to another long exact sequence in cohomology
\begin{equation} \label{eq:second}
\ldots \longrightarrow H^q(\partial \overline{\rS}, \tm_\lambda) \xrightarrow{p^q} H^q(\partial_{1}, \tm_\lambda) \oplus H^q(\partial_{2}, \tm_\lambda) \longrightarrow H^q(\partial_{0}, \tm_\lambda) \longrightarrow \ldots
\end{equation} 
We now define the space of $q$-ghost classes by $$Gh^q(\tm_\lambda) = Im(r^q) \cap Ker(p^q).$$ 

\par We will see that for almost every $q$ and $\lambda$, $Gh^q(\tm_\lambda) = \{0\}$. For pedagogical reasons, we now provide the details for all the nine cases. To begin with let us define  the maps $$s_{_q} : H^{q-1}(\partial_0, \tm_{\lambda}) \longrightarrow H^{q}(\partial \rS, \tm_{\lambda}) $$  and  for $i=1,2$ $$r^{q}_{i} :  H^{q}(\partial_i, \tm_{\lambda}) \longrightarrow H^{q}(\partial_0, \tm_{\lambda}) .$$ Note that $H^{q}(\rS, \tm_{\lambda}) =0$ for $q =1$ and $q \geq 4$.  Following this in all the cases, we obtain $Im(r^{q})= \{0\}$ for $q = 1$ and $q \geq 4.$  Also, in every case,  $Ker(p^{0}) = Im(s_{_{0}}) = \{0\}$. Therefore, it is easy to see that in all the cases we get the following conclusion.
\begin{lema}\label{q=014} 
For any highest weight $\lambda$, $Gh^q(\tm_\lambda) = \{0\}$ for $q = 0,1$ and $q \geq 4.$
\end{lema}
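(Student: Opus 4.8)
The plan is to exploit the two defining constraints on a ghost class separately: any class in $Gh^q(\tm_\lambda) = Im(r^q) \cap Ker(p^q)$ must lie simultaneously in the image of the restriction $r^q$ and in the kernel of the face-restriction map $p^q$. For the degrees in question one of these two subspaces already collapses to zero, so the intersection vanishes irrespective of the other map. Concretely, I would split the argument into the regime $q = 1$ together with $q \geq 4$, where $Im(r^q)$ vanishes, and the isolated degree $q = 0$, where $Ker(p^0)$ vanishes.

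For $q = 1$ and $q \geq 4$, recall that $r^q$ is the restriction $H^q(\rS, \tm_\lambda) \to H^q(\partial \overline{\rS}, \tm_\lambda)$, and that $H^q(\rS, \tm_\lambda) = 0$ in precisely these degrees: vanishing for $q \geq 4$ is forced by the cohomological dimension of $\mr{SL}_3(\Z)$ being $3$, while $H^1(\rS, \tm_\lambda) = 0$ follows from Bass--Milnor--Serre (Corollary 16.4 of~\cite{BMS67}) when $\lambda \neq 0$ and from the triviality of the abelianization of $\mr{SL}_3(\Z)$ when $\lambda = 0$. Hence $r^q$ has zero domain, so that
\[
Gh^q(\tm_\lambda) = Im(r^q) \cap Ker(p^q) \subseteq Im(r^q) = \{0\}.
\]

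For $q = 0$ I would instead invoke exactness of the long exact sequence~\eqref{eq:second}. Writing $s_q : H^{q-1}(\partial_0, \tm_\lambda) \to H^q(\partial \overline{\rS}, \tm_\lambda)$ for the connecting homomorphism, exactness at the term $H^q(\partial \overline{\rS}, \tm_\lambda)$ yields $Ker(p^q) = Im(s_q)$. In degree $q = 0$ the source of $s_0$ is $H^{-1}(\partial_0, \tm_\lambda) = 0$, whence $Ker(p^0) = Im(s_0) = \{0\}$, and therefore $Gh^0(\tm_\lambda) \subseteq Ker(p^0) = \{0\}$. This disposes of all three ranges.

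I do not anticipate a genuine obstacle: this is a degenerate-degree bookkeeping statement whose entire content is inherited from the vanishing of $H^\bullet(\rS, \tm_\lambda)$ (cohomological dimension together with Bass--Milnor--Serre) and from the exactness of~\eqref{eq:second}. The only point deserving minor care is to confirm that the degree-$1$ vanishing holds uniformly in $\lambda$, the trivial coefficient system included, so that the single statement simultaneously covers all nine cases of Theorem~\ref{bdcohsl3}.
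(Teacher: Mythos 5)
Your proposal is correct and follows essentially the same route as the paper: the authors likewise kill $Gh^q$ for $q=1$ and $q\geq 4$ by noting $H^q(\rS,\tm_\lambda)=0$ (hence $Im(r^q)=0$), and kill $Gh^0$ by observing $Ker(p^0)=Im(s_{_0})=\{0\}$. Your extra care in justifying the degree-one vanishing uniformly in $\lambda$ (Bass--Milnor--Serre for $\lambda\neq 0$, perfectness of $\mr{SL}_3(\Z)$ for $\lambda=0$) only makes explicit what the paper invokes elsewhere.
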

Now, what remains to discuss is the space $Gh^{q}(\tm_{\lambda})$ for $q=2,3$. Following the above discussion, we observe that in case 1 and case 9,   $Gh^{q}(\tm_{\lambda})=\{ 0\} , \forall q$. Since from Theorem~\ref{bdcohsl3}, $H^{q}(\partial \rS, \tm_{\lambda}) =0$, $\forall q$ in case 9 and for $q=1,2,3$ in case 1.  

\par Note that case 2 and case 3,  are dual to each other. We know that $H^{2}(\partial \rS, \tm_{\lambda})=0$ therefore $Ker(p^{2})=\{0\}$. This gives us $Gh^{2}(\tm_{\lambda}) = \{ 0\}.$ For $q=3$ we have $$Gh^{3}(\tm_{\lambda}) = Im(r^3) \cap Im(s_{_{3}}) ,$$ where 
$s_{_{3}} : H^{2}(\partial_0, \tm_{\lambda}) \longrightarrow H^{3}(\partial S, \tm_{\lambda}) ,$
and following~\eqr{e211q} we see that $Im(s_{_{3}})=\{ 0\}$ since $H^{2}(\partial_0, \tm_{\lambda})=0$. In other words, in case 2, there are no second degree cohomology classes of $\rP_0$ and this implies that the domain of $s_{_{3}}$ is zero. Hence, the image is so. We conclude this in the form of following lemma.
\begin{lema}
In case 2 and case 3, \ie for $\lambda=m_2 \gamma_2$ and $\lambda=m_1 \gamma_1 $, respectively, with $m_1, m_2$ non zero even integers,  $Gh^q(\tm_\lambda) = \{0\}, \, \, \forall q .$
\end{lema}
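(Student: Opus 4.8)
The plan is to reduce the whole statement to the two middle degrees $q=2,3$, since Lemma~\ref{q=014} already establishes $Gh^q(\tm_\lambda)=\{0\}$ for $q=0,1$ and $q\geq 4$ for every highest weight. Moreover, Cases~2 and~3 are exchanged by the duality $\lambda\mapsto\lambda^\ast$ (which interchanges the roles of $\rP_1$ and $\rP_2$), so it suffices to treat Case~2 and then transport the conclusion to Case~3 via the compatibilities recorded around~\eqr{poincare}.

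For $q=2$ I would argue purely by ambient vanishing. By Theorem~\ref{bdcohsl3}, in Case~2 one has $H^2(\partial\rS,\tm_\lambda)=0$. Since $Gh^2(\tm_\lambda)=Im(r^2)\cap Ker(p^2)$ is by definition a subspace of $H^2(\partial\overline{\rS},\tm_\lambda)$, it is a subspace of the zero space and therefore trivial. In particular $Ker(p^2)=\{0\}$, which already forces $Gh^2(\tm_\lambda)=\{0\}$.

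The only degree requiring an actual computation is $q=3$. Here I would invoke exactness of the long exact sequence~\eqref{eq:second} at the term $H^3(\partial\overline{\rS},\tm_\lambda)$: this identifies $Ker(p^3)$ with the image of the connecting map $s_3\colon H^2(\partial_0,\tm_\lambda)\to H^3(\partial\overline{\rS},\tm_\lambda)$, so that $Gh^3(\tm_\lambda)=Im(r^3)\cap Im(s_3)$. It is then enough to show $Im(s_3)=\{0\}$, for which it suffices that the source $H^2(\partial_0,\tm_\lambda)$ vanishes. By the description of the cohomology of the face $\partial_0$, this group is $H^0(\rS^{\mr{M}_0},\tm_{s_1s_2\cdot\lambda})\oplus H^0(\rS^{\mr{M}_0},\tm_{s_2s_1\cdot\lambda})$, indexed by the two length-two Kostant representatives. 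When $m_1=0$ and $m_2$ is even, the tables of Subsection~2.3.1 give $s_1s_2\cdot\lambda=(-m_2-3)\gamma_1$ and $s_2s_1\cdot\lambda=m_2\gamma_1+(-m_2-3)\gamma_2$, each carrying an odd coefficient; hence Lemma~\ref{parity0} kills both local systems and $H^2(\partial_0,\tm_\lambda)=0$, exactly the vanishing already visible in~\eqr{e211q}. Consequently $Im(s_3)=\{0\}$, so $Ker(p^3)=\{0\}$ and $Gh^3(\tm_\lambda)=\{0\}$.

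I do not expect a genuine obstacle here; the argument is bookkeeping organized around the two exact sequences~\eqref{eq:second} and the restriction-to-boundary sequence, with the single nontrivial input being the parity-driven vanishing of $H^2(\partial_0,\tm_\lambda)$ from Section~\ref{parity}. The one point I would make explicit rather than leave implicit is the passage from Case~2 to Case~3: since these two weights are dual and the maps $r^\bullet$, $p^\bullet$, $s_\bullet$ are compatible with Poincaré duality, the vanishing of ghost classes for one immediately yields it for the other.
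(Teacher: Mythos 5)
Your proposal is correct and follows essentially the same route as the paper: degree $2$ is handled by the vanishing of $H^2(\partial\rS,\tm_\lambda)$, degree $3$ by identifying $Ker(p^3)$ with $Im(s_3)$ and observing that $H^2(\partial_0,\tm_\lambda)=0$ (which the paper reads off from \eqr{e211q} and you re-derive from the parity of the Kostant weights $s_1s_2\cdot\lambda$ and $s_2s_1\cdot\lambda$), with the other degrees covered by Lemma~\ref{q=014} and Case~3 obtained from Case~2 by duality. The only cosmetic difference is that you make the parity computation and the duality transfer explicit where the paper merely cites them.
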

Let us discuss now the case 6 and case 7. Following Theorem~\ref{bdcohsl3}, $H^{3}(\partial \mr{S}, \tm_{\lambda})=0$ and therefore $Gh^3(\tm_\lambda) = \{0\}$. By the definition of ghost classes, we have
$Gh^2(\tm_\lambda)=Im(r^2) \cap Im(s_{_{2}}) $ where $ s_{_{2}} : H^{1}(\partial_0, \tm_{\lambda}) \longrightarrow H^{2}(\partial \rS, \tm_{\lambda}) ,$ \ie
\begin{equation*}\label{eq:s12} s_{_{2}} :  H^{0}(\rS^{\rm{M}_{0}}, \widetilde{\m}_{s_1\cdot\lambda}) \oplus H^{0}(\rS^{\rm{M}_{0}}, \widetilde{\m}_{s_2\cdot\lambda})\longrightarrow  H_{!}^{1}(\rS^{\rm{M}_1}, \tm_{s_1\cdot\lambda}) \oplus H_{!}^{1}(\rS^{\rm{M}_2}, \tm_{s_2\cdot\lambda}) \oplus \Q \oplus \Q.\end{equation*}
However,
$ H^{0}(\rS^{\rm{M}_{0}}, \widetilde{\m}_{s_2\cdot\lambda})=0$
and
$\dim  H^{0}(\rS^{\rm{M}_{0}}, \widetilde{\m}_{s_1\cdot\lambda})=1.$
Therefore, in case 6 and case 7, either
$\dim Gh^2(\widetilde{M}_\lambda)=0$ or $1$.
\begin{lema}
In case 6 and case 7, \ie for $\lambda=m_2 \gamma_2$ and $\gamma=m_1\gamma_1$, respectively, with $m_1$ and $m_2$ any odd integer, $Gh^q(\tm_\lambda) = \{0\}, \, \, \forall q,$ except possibly for $q =2$.
\end{lema}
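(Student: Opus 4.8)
The plan is to peel off every cohomological degree except $q=2$ using results already in place, and then to argue that the lemma asserts nothing in degree $2$. Throughout I use that, by definition, $Gh^q(\tm_\lambda)=\mathrm{Im}(r^q)\cap\ker(p^q)$ is a subspace of $H^q(\partial\rS,\tm_\lambda)$; hence whenever the boundary cohomology vanishes in a degree, so does the ghost space there. First I would quote Lemma~\ref{q=014}, which is valid for an arbitrary highest weight and so applies unchanged in cases 6 and 7, to dispose of $q=0,1$ and $q\geq 4$. This reduces the problem to the two degrees $q=2,3$.

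For $q=3$ I would appeal directly to Theorem~\ref{bdcohsl3}: in case 6 ($\lambda=m_2\gamma_2$, $m_2$ odd) and in case 7 ($\lambda=m_1\gamma_1$, $m_1$ odd) the boundary cohomology is concentrated in degree $2$, so in particular $H^3(\partial\rS,\tm_\lambda)=0$. Since $Gh^3(\tm_\lambda)\subseteq H^3(\partial\rS,\tm_\lambda)$, it vanishes. Together with the previous step this already gives $Gh^q(\tm_\lambda)=\{0\}$ for all $q\neq 2$, which is exactly the assertion of the lemma.

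It remains to treat $q=2$, where the statement makes no claim, so strictly nothing more is needed. To make the exception precise, however, I would record the dimension bound: by exactness of the sequence~\eqref{eq:second} one has $\ker(p^2)=\mathrm{Im}(s_2)$, so $Gh^2(\tm_\lambda)=\mathrm{Im}(r^2)\cap\mathrm{Im}(s_2)$ with $s_2\colon H^1(\partial_0,\tm_\lambda)\to H^2(\partial\rS,\tm_\lambda)$. The source decomposes as $H^0(\rS^{\mr{M}_0},\tm_{s_1\cdot\lambda})\oplus H^0(\rS^{\mr{M}_0},\tm_{s_2\cdot\lambda})$, and the parity condition of Lemma~\ref{parity0} annihilates the second summand while leaving the first one-dimensional. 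Hence $\dim Gh^2(\tm_\lambda)\in\{0,1\}$.

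The hard part will be deciding which of these two values actually occurs, that is, whether the one-dimensional image of $s_2$ really lies inside $\mathrm{Im}(r^2)$. This cannot be resolved by the parity and Euler-characteristic bookkeeping used above, since it amounts to detecting whether a distinguished Eisenstein class extends from the interior, a phenomenon controlled by the (non)vanishing of an associated $L$-value. This is precisely the transcendental input the paper defers, which is why the lemma is stated with the ``except possibly for $q=2$'' proviso.
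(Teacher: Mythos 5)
Your proposal is correct and follows essentially the same route as the paper: Lemma~\ref{q=014} for $q=0,1$ and $q\geq 4$, the vanishing of $H^3(\partial\rS,\tm_\lambda)$ from Theorem~\ref{bdcohsl3} for $q=3$, and the identification $Gh^2(\tm_\lambda)=\mathrm{Im}(r^2)\cap\mathrm{Im}(s_2)$ together with the parity condition to bound $\dim Gh^2(\tm_\lambda)\leq 1$. The only cosmetic difference is that you correctly flag the $q=2$ analysis as strictly beyond what the lemma asserts, whereas the paper folds that dimension count into the proof in preparation for Theorem~\ref{ghostthm}.
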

Consider now the case 5 and case 8. In case 5, $Gh^2(\tm_\lambda) =\{ 0\} $ since $Ker(p^{2})=\{0\}$. This simply follows by studying $Im(s_{_2})$ where $s_{_2}$ is defined by
$$ s_{_{2}} :  H^{0}(\rS^{\rm{M}_{0}}, \widetilde{\m}_{s_1\cdot\lambda}) \longrightarrow  H_{!}^{1}(\rS^{\rm{M}_1}, \tm_{s_1\cdot\lambda}) \oplus H_{!}^{1}(\rS^{\rm{M}_2}, \tm_{s_2\cdot\lambda}) \,,$$ and $Ker(s_2)$ is the image of the morphism
\begin{equation*}
H^1(\partial_{1}, \tm_\lambda) \oplus H^1(\partial_{2}, \tm_\lambda) \longrightarrow H^1(\partial_{0}, \tm_\lambda)
\end{equation*}
from the exact sequence ~\eqr{second}. From the calculations in Section ~\ref{bdsl3} we get $Im(s_{_2})=0$. Similarly, we have
\begin{eqnarray*}
s_{_{3}} :  H^{0}(\rS^{\rm{M}_{0}}, \widetilde{\m}_{s_1s_2\cdot\lambda}) \oplus H^{0}(\rS^{\rm{M}_{0}}, \widetilde{\m}_{s_2s_1\cdot\lambda})  \longrightarrow & H_{!}^{1}(\rS^{\rm{M}_1}, \tm_{s_1s_2\cdot\lambda})
\cong & 
S_{m_2+2} .
\end{eqnarray*}
and again by same reasoning, we see that $s_{_3}$ vanishes. Therefore $Gh^3(\tm_\lambda) =\{0\}$. Case 8 is analogous and we simply conclude the following.
\begin{lema}
In case 5 and case 8, \ie for $\lambda=m_1 \gamma_1+m_2 \gamma_2$ with $m_1$ and $m_2$ nonzero and having different parity modulo $2$,  $Gh^q(\tm_\lambda) = \{0\}, \, \, \forall q .$
\end{lema}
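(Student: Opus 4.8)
The plan is to reduce the problem to the two degrees $q=2,3$ and there to show that the kernel $Ker(p^q)$ already vanishes, which immediately forces $Gh^q(\tm_\lambda)=Im(r^q)\cap Ker(p^q)=\{0\}$ without any further analysis of $Im(r^q)$.

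First I would apply Lemma~\ref{q=014}, which yields $Gh^q(\tm_\lambda)=\{0\}$ for $q=0,1$ and $q\geq 4$, leaving only $q=2,3$. For these, the key observation is that the map $p^q\colon H^q(\partial\overline{\rS},\tm_\lambda)\to H^q(\partial_1,\tm_\lambda)\oplus H^q(\partial_2,\tm_\lambda)$ of~\eqr{second} is exactly the edge homomorphism of the two-column boundary spectral sequence of Section~\ref{bdsl3}. Since that spectral sequence degenerates at $E_2$, its kernel is the surviving second-column contribution, i.e. $Ker(p^q)=E_2^{1,q-1}$. Equivalently, by exactness of~\eqr{second} one has $Ker(p^q)=Im(s_q)$, where $s_q\colon H^{q-1}(\partial_0,\tm_\lambda)\to H^q(\partial\overline{\rS},\tm_\lambda)$ is the connecting homomorphism, and $Im(s_q)\cong Coker(d_1^{0,q-1})=E_2^{1,q-1}$.

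The heart of the matter is that in Cases 5 and 8 the boundary computation of Section~\ref{bdsl3} gives $E_2^{1,q}=0$ for every $q$; indeed there the differentials $d_1^{0,q}$ are surjective onto the one-dimensional spaces $H^q(\partial_0,\tm_\lambda)$. Hence $Ker(p^q)=E_2^{1,q-1}=0$ for all $q$, and in particular for $q=2,3$, so $Gh^q(\tm_\lambda)=\{0\}$. Concretely, in Case 5 this is the surjectivity of $H^1(\rS^{\mr{M}_2},\tm_{e\cdot\lambda})\to H^0(\rS^{\mr{M}_0},\tm_{s_1\cdot\lambda})$ and of $H^1(\rS^{\mr{M}_1},\tm_{s_1\cdot\lambda})\oplus H^1(\rS^{\mr{M}_2},\tm_{s_2\cdot\lambda})\to H^0(\rS^{\mr{M}_0},\tm_{s_1s_2\cdot\lambda})$, which shows that $s_2$ and $s_3$ both vanish.

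Case 8 then follows by the symmetry interchanging $\rP_1$ and $\rP_2$ (equivalently by Poincar\'e duality $\lambda\mapsto\lambda^\ast$), under which it becomes Case 5. The only genuine obstacle is the bookkeeping identifying the restriction map $H^{q-1}(\partial_1,\tm_\lambda)\oplus H^{q-1}(\partial_2,\tm_\lambda)\to H^{q-1}(\partial_0,\tm_\lambda)$ of~\eqr{second} with the spectral-sequence differential $d_1^{0,q-1}$, and hence $Ker(p^q)$ with $E_2^{1,q-1}$; once this is in place the statement is purely formal, requiring no new input beyond the vanishing $E_2^{1,\bullet}=0$ already recorded for these two cases, and in particular no transcendental arguments.
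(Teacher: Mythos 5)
Your argument is correct and follows the same route as the paper: reduce to $q=2,3$ via Lemma~\ref{q=014}, identify $Ker(p^q)$ with $Im(s_q)\cong Coker(d_1^{0,q-1})=E_2^{1,q-1}$ via the exact sequence \eqr{second}, and invoke the vanishing $E_2^{1,\bullet}=0$ established in the Case 5 computation of Section~\ref{bdsl3} (with Case 8 obtained by the symmetry interchanging $\rP_1$ and $\rP_2$). This matches the paper's proof in substance and in detail.
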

The only case that remains to discuss is case 4. Following Lemma~\ref{q=014}, the only cases which need to be discussed are $q=2$ and $q=3$. However, following case 4 of Theorem~\ref{bdcohsl3}, we know that $H^{2}(\partial S, \tm_{\lambda}) =0$, therefore $Gh^2(\tm_\lambda) = \{0\}.$ $Gh^3(\tm_\lambda) = 0$ because $H^2(\partial_{0},\widetilde{\mathcal M}_\lambda) = 0$. Hence, we can simply summarize this in the form of following lemma.
\begin{lema}
In case 4, \ie for $\lambda=m_1 \gamma_1+m_2 \gamma_2$ with $m_1, m_2$ both non zero even integers,  $Gh^q(\tm_\lambda) = \{0\}, \, \, \forall q .$
\end{lema}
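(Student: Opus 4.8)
The plan is to follow the same strategy used for the preceding cases and reduce the problem to the two remaining degrees. By Lemma~\ref{q=014} we already know that $Gh^q(\tm_\lambda) = \{0\}$ for $q = 0, 1$ and for $q \geq 4$, so only $q = 2$ and $q = 3$ require attention. Recall that by definition $Gh^q(\tm_\lambda) = Im(r^q) \cap Ker(p^q)$, where $r^q$ is the restriction from $H^q(\rS, \tm_\lambda)$ to the boundary and $p^q$ is the map appearing in the long exact sequence~\eqr{second} attached to the covering $\partial \overline{\rS} = \partial_0 \cup \partial_1 \cup \partial_2$.

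For $q = 2$, I would simply invoke Case 4 of Theorem~\ref{bdcohsl3}, which gives $H^2(\partial \rS, \tm_\lambda) = 0$. Since $Im(r^2)$ is a subspace of $H^2(\partial \rS, \tm_\lambda)$, it is automatically zero, and hence $Gh^2(\tm_\lambda) = \{0\}$ with no further work.

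For $q = 3$, the key observation is that exactness of~\eqr{second} identifies $Ker(p^3)$ with the image of the connecting map $s_3 : H^2(\partial_0, \tm_\lambda) \to H^3(\partial \rS, \tm_\lambda)$, so that $Gh^3(\tm_\lambda) = Im(r^3) \cap Im(s_3)$. It therefore suffices to show that the source $H^2(\partial_0, \tm_\lambda)$ vanishes. Here I would use the description of the cohomology of the face $\partial_0$, namely that it is the sum of the terms $H^0(\rS^{\mr{M}_0}, \tm_{w \cdot \lambda})$ over Weyl representatives $w \in \W^{\rP_0}$ of length $2$. In Case 4 the parity conditions of Lemma~\ref{parity0} select only $\overline{\W}^0(\lambda) = \{e, s_1 s_2 s_1\}$, whose elements have lengths $0$ and $3$ respectively; no surviving representative has length $2$. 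Consequently $H^2(\partial_0, \tm_\lambda) = 0$, the map $s_3$ has zero source, and $Im(s_3) = \{0\}$. This forces $Gh^3(\tm_\lambda) = \{0\}$, exhausting all degrees.

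I expect the argument to be essentially formal once the earlier results are in place, so there is no genuine analytic obstacle here. The only point requiring care is the bookkeeping for $q = 3$: correctly matching $Ker(p^3)$ with $Im(s_3)$ through the exact sequence~\eqr{second}, and then confirming from the length column of the Kostant representatives together with the parity selection $\overline{\W}^0(\lambda) = \{e, s_1 s_2 s_1\}$ that indeed no length-two representative contributes to $H^2(\partial_0, \tm_\lambda)$.
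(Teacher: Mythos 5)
Your argument is correct and coincides with the paper's own proof: degree $2$ is killed because $H^2(\partial \rS,\tm_\lambda)=0$ by Case 4 of Theorem~\ref{bdcohsl3}, and degree $3$ is killed because $H^2(\partial_0,\tm_\lambda)=0$ (no length-two Kostant representative survives the parity selection $\overline{\W}^0(\lambda)=\{e,s_1s_2s_1\}$), so $Im(s_3)=\{0\}$. Nothing further is needed.
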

\begin{rmk}  
We can summarize the whole discussion of this section in the following lines to give the reader an intuitive idea of how to get to the punchline. The kernel of $p^q$ is isomorphic to the image of $s_{_{q}}$ and the image of $r^q$ is the  Eisenstein cohomology of degree $q$. Thus the ghost classes are classes in the Eisenstein cohomology that are also in the image of the connecting homomorphism $s_q$. Since the Eisenstein cohomology is concentrated in degrees $2$ and $3$, see Theorem~\ref{Eiscoh}, we have that any ghost class of $\mr{SL}_3(\Z)$ must come from the image of $H^1(\partial_{0},\widetilde{\mathcal M}_\lambda)$ or  $H^2(\partial_{0},\widetilde{\mathcal M}_\lambda)$ in $H^2(\partial \rS,\widetilde{\mathcal M}_\lambda)$ or $H^3(\partial \rS,\widetilde{\mathcal M}_\lambda)$, respectively. Examining all the nine cases of boundary cohomology (see Theorem~\ref{bdcohsl3}), we see that there is no contribution from the minimal parabolic subgroup $\rP_0$ to the boundary cohomology of degree $2$ or $3$, except in the cases 6 and 7. Thus, there are no ghost classes in $\mr{SL}_3(\Z)$ and similarly in $\mr{GL}_3(\Z)$, except possibly in the cases 6 and 7. 
\end{rmk}
Hence, we summarize the discussion in the following theorem.
\begin{thm}\label{ghostthm}There are no nontrivial ghost classes in $\mr{SL}_3(\Z)$ and $\mr{GL}_3(\Z)$, except in the cases 6 and 7. In those cases, non-zero ghost classes might occur only in degree 2, where we have
$Gh^2(\widetilde{M}_\lambda)=0$ or $\Q$.
\end{thm}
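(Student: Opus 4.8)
The plan is to reduce the computation of $Gh^q(\tm_\lambda)=Im(r^q)\cap Ker(p^q)$ to the two potentially nonvanishing degrees $q=2,3$, and then to analyze the relevant connecting homomorphism in each of the nine cases of Theorem~\ref{bdcohsl3}. Since $Im(r^q)$ is by definition the Eisenstein cohomology $H^q_{Eis}(\mr{SL}_3(\Z),\m_\lambda)$, and the latter is concentrated in degrees $2$ and $3$ by Theorem~\ref{Eiscoh}, every ghost class must already live in degree $2$ or $3$; combined with Lemma~\ref{q=014} this disposes of $q=0,1$ and $q\geq 4$. First I would record the key identification coming from~\eqr{second}: the kernel $Ker(p^q)$ coincides with the image of the connecting map $s_{_q}\colon H^{q-1}(\partial_0,\tm_\lambda)\to H^q(\partial\rS,\tm_\lambda)$. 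Thus a ghost class in degree $q$ is exactly an Eisenstein class lying in $Im(s_{_q})$, and the whole problem reduces to understanding how much of the cohomology of the minimal parabolic face $\partial_0$ survives into $H^q(\partial\rS,\tm_\lambda)$.

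Next I would dispatch the cases in which one of the two intersecting spaces vanishes outright. In cases $1$ and $9$ the boundary cohomology $H^q(\partial\rS,\tm_\lambda)$ vanishes in the relevant degrees by Theorem~\ref{bdcohsl3}, so $Gh^q=\{0\}$ trivially. In cases $2$, $3$ and $4$ one has $H^2(\partial\rS,\tm_\lambda)=0$, which forces $Gh^2=\{0\}$; for $Gh^3$ the domain $H^2(\partial_0,\tm_\lambda)$ of $s_{_3}$ vanishes (see~\eqr{e211q}), so $Im(s_{_3})=\{0\}$ and hence $Gh^3=\{0\}$. In cases $5$ and $8$ I would argue that $Ker(p^2)=\{0\}$ by showing $Im(s_{_2})=0$; here $Ker(s_{_2})$ is the image of the restriction $H^1(\partial_1,\tm_\lambda)\oplus H^1(\partial_2,\tm_\lambda)\to H^1(\partial_0,\tm_\lambda)$ appearing in~\eqr{second}, and the computations of Section~\ref{bdsl3} show this restriction is surjective, forcing $s_{_2}=0$; the analogous argument for $s_{_3}$ gives $Gh^3=\{0\}$. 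These verifications are routine once the boundary cohomology is in hand.

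The interesting case, and the one I expect to be the main obstacle, is the pair $6$ and $7$ (which are dual to each other, so it suffices to treat case $6$). Here $H^3(\partial\rS,\tm_\lambda)=0$ immediately gives $Gh^3=\{0\}$, so only $Gh^2$ remains. The relevant connecting map is
\[
s_{_{2}}\colon H^{0}(\rS^{\rm{M}_0},\widetilde{\m}_{s_1\cdot\lambda})\oplus H^{0}(\rS^{\rm{M}_0},\widetilde{\m}_{s_2\cdot\lambda})\longrightarrow H_{!}^{1}(\rS^{\rm{M}_1},\tm_{s_1\cdot\lambda})\oplus H_{!}^{1}(\rS^{\rm{M}_2},\tm_{s_2\cdot\lambda})\oplus\Q\oplus\Q,
\]
and the parity conditions of Section~\ref{parity} force $H^{0}(\rS^{\rm{M}_0},\widetilde{\m}_{s_2\cdot\lambda})=0$ while $H^{0}(\rS^{\rm{M}_0},\widetilde{\m}_{s_1\cdot\lambda})$ is one-dimensional. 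Since $Gh^2(\tm_\lambda)=Im(r^2)\cap Im(s_{_2})$ is then a subspace of a one-dimensional space, its dimension is $0$ or $1$, i.e.\ $Gh^2(\tm_\lambda)=0$ or $\Q$. The hard part, namely deciding which of the two possibilities actually occurs, is precisely where one would have to test whether the single boundary class is hit by a global Eisenstein class, equivalently whether a certain critical $L$-value vanishes; as indicated in the introduction this cannot be settled by the present rational, non-transcendental methods, which is why the statement is left with the alternative $0$ or $\Q$. Finally, transporting everything to $\mr{GL}_3(\Z)$ via Lemma~\ref{sl3togl3} yields the same conclusion, completing the argument.
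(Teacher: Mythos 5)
Your proposal is correct and follows essentially the same route as the paper: the reduction via Lemma~\ref{q=014}, the identification $Ker(p^q)=Im(s_{_q})$, the case-by-case vanishing arguments using Theorem~\ref{bdcohsl3} (including the surjectivity of the restriction to $H^1(\partial_0,\tm_\lambda)$ in cases 5 and 8), and the one-dimensional domain of $s_{_2}$ in cases 6 and 7 yielding the alternative $Gh^2=0$ or $\Q$. The only cosmetic difference is that you fold case 4 in with cases 2 and 3, whereas the paper treats it separately by the same argument.
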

\section*{Acknowledgement}
The authors would like to thank the Max Planck Institute for Mathematics (MPIM), Bonn, where most of the discussion and work took place, for its hospitality and support. 

JB would like to thank the Mathematics Department of the Georg-August University G\"ottingen for the support, and especially to Valentin Blomer and Harald Helfgott for their encouragement during the writing of this article. In addition, JB would like to thank the organizers of the HIM trimester program on ``Periods in Number Theory, Algebraic Geometry and Physics" for giving him the opportunity to participate, where he benefited through many stimulating conversations with G\"unter Harder and  Madhav Nori, and extend his thanks to the Institut des Hautes \'Etudes Scientifiques (IHES), Paris, for their hospitality during the final work on this article. JB's work is financially supported by ERC Consolidator grant 648329 (GRANT). 

IH would like to thank G\"unter Harder for the multiple discussions during his visit to MPIM, and Karen Vogtmann and Martin Kassabov, for raising an important question which has been answered through the work of this article.

 MM would like to thank IHES, Universit\'e Paris 13 and Universit\'e Paris-Est Marne-la-Vall\'ee  for their hospitality, and G\"unter Harder for his support and for the many inspiring discussions.
 
\nocite{}
\bibliographystyle{abbrv}
\bibliography{EC}
\end{document}